\newcommand{\A}{\mathbb{A}}
\newcommand{\N}{\mathbb{N}}
\newcommand{\R}{\mathbb{R}}
\newcommand{\OO}{\mathcal{O}}
\newcommand{\RR}{\mathcal{R}}
\newcommand{\TT}{\mathcal{T}}
\newcommand{\XX}{\mathcal{X}}
\newcommand{\dual}[3][]{#1\langle#2\,,\,#3#1\rangle}
\newcommand{\norm}[3][]{#1\|#2#1\|_{#3}}
\newcommand{\diam}{{\rm diam}}
\newcommand{\set}[3][\big]{#1\{#2\,:\,#3#1\}}
\def\K{\mathbb K}
\def\C{\mathbb C}
\def\HH{\mathcal H}
\def\Re{\operatorname{Re}}
\def\product#1#2{(#1\,,\,#2)}
\def\reff#1#2{\!\stackrel{\eqref{#1}}{#2}\!}
\def\UU{\mathcal U}
\def\MM{\mathcal M}
\def\refine{\operatorname{refine}}
\def\Cmark{C_{\rm mark}}
\def\Cstab{C_{\rm stb}}
\def\Cred{C_{\rm red}}
\def\Crel{C_{\rm rel}}
\def\Crelexact{C_{\rm rel}^\star}
\def\qred{q_{\rm red}}
\def\T{\mathbb{T}}
\def\Cest{C_{\rm est}}
\def\qest{q_{\rm est}}
\def\qlin{q_{\rm lin}}
\def\Cdrel{C_{\rm drel}^\star}
\def\Cpic{C_{\rm pic}}
\def\Cmon{C_{\rm mon}}
\def\Copt{C_{\rm opt}}
\def\Cmesh{C_{\rm mesh}}
\def\Clin{C_{\rm lin}}
\def\Cson{C_{\rm son}}
\def\osc{{\rm osc}}
\def\plus{\bullet}
\newcommand{\inexact}{{}}
\newcommand{\exact}{\star}
\def\pic{{\rm Pic}}
\def\Cwork{C_{\rm work}}
\def\tildeCwork{\widetilde{C}_{\rm work}}
\def\diver{\operatorname{div}}
\def\GG{\mathbf{\mu}}
\def\IntSet#1{\int_{#1}}
\def\Int#1#2{\int_{#1}^{#2}}
\def\d#1{\mathop{\mathrm{d}#1}}
\def\zzeta{\mathbf{\zeta}}
\def\kkappa{\mathbf{\kappa}}
\newtheorem{lemma}{Lemma}
\newtheorem{theorem}[lemma]{Theorem}
\newtheorem{proposition}[lemma]{Proposition}
\newtheorem{remark}[lemma]{Remark}
\newtheorem{algorithm}[lemma]{Algorithm}
\renewcommand{\subsection}[1]{\refstepcounter{subsection}\medskip{\bf\thesubsection.~#1.}}
\title{Rate optimal adaptive FEM with inexact solver\\for nonlinear operators}
\author{Gregor Gantner}
\author{Alexander Haberl}
\author{Dirk Praetorius}
\author{Bernhard Stiftner}
\address{TU Wien, Institute for Analysis and Scientific Computing, Wiedner Hauptstr.~8--10/E101/4, 1040 Wien, Austria}
\email{\{gregor.gantner\,,\,dirk.praetorius\,,\,bernhard.stiftner\}@asc.tuwien.ac.at}
\email{alexander.haberl@asc.tuwien.ac.at\quad\rm(corresponding author)}
\date{\today}
\subjclass{65N30, 65N12, 65N50, 65M22, 65J15}
\keywords{quasilinear elliptic PDE, finite element method, adaptive mesh-refinement, adaptive solution of nonlinear algebraic system, optimal convergence rates, Banach fixed point theorem}
\begin{document}
\begin{abstract}
We prove convergence with optimal algebraic rates for an adaptive finite element method for nonlinear equations with strongly monotone operator. 
Unlike prior works, our analysis also includes the iterative and inexact solution of the arising nonlinear systems by means of the Picard iteration. 
Using nested iteration, we prove, in particular, that the number of of Picard iterations is uniformly bounded 
in generic cases, and the overall computational cost is (almost) optimal. Numerical experiments confirm the theoretical results.
\end{abstract}
\maketitle

\section{Introduction}
\label{section:introduction}

In recent years, the analysis of convergence and optimal convergence behaviour of adaptive finite element methods has matured. 
We refer to the seminal works~\cite{doerfler,mns,bdd,stevenson07,ckns,ffp} for some milestones for linear elliptic equations,~\cite{veeser,dk08,bdk,gmz} for non-linear problems, and~\cite{axioms} for some general abstract framework. 
While the interplay of adaptive mesh-refinement, optimal convergence rates, and inexact solvers has already been addressed and analyzed, e.g., in~\cite{stevenson07,MR3068564,MR3095916,axioms} for linear PDEs and in~\cite{cg13} for eigenvalue problems, the influence of inexact solvers for nonlinear equations has not been analyzed yet. 
The work~\cite{gmz11} considers adaptive mesh-refinement in combination with a {K}a\v canov-type iterative solver for strongly monotone operators. 
In the spirit of~\cite{msv,siebert}, the focus is on a plain convergence result of the overall strategy, while the proof of optimal convergence rates remains open.

On the other hand, there is a rich body on {\sl a~posteriori} error estimation which also includes the iterative and inexact solution for nonlinear problems; see, e.g., \cite{ev13}.
The present work aims to close the gap between numerical analysis (e.g., \cite{axioms}) and empirical evidence of optimal convergence rates (e.g., \cite{gmz11,ev13})
by analyzing an adaptive algorithm from~\cite{cw2015}.

We consider a nonlinear elliptic equation in the variational formulation
\begin{align}\label{eq:strongform}
 \dual{Au^\exact}{v} = \dual{F}{v} 
 \quad\text{for all }v\in\HH,
\end{align}
where $\HH$ is a Hilbert space over $\K\in\{\R,\C\}$ with dual space $\HH^*$ and corresponding duality brackets $\dual\cdot\cdot$ and $F\in\HH^*$. 
We treat the variational formulation in an abstract framework.
We suppose that the operator $A:\HH\to\HH^*$ satisfies the following conditions:
\begin{enumerate}
 \renewcommand{\theenumi}{O\arabic{enumi}}
 \bf
 \item\label{axiom:monotone}
 \rm
 \textbf{$\boldsymbol A$ is strongly monotone:} There exists $\alpha>0$ such that
  \begin{align*}
  \alpha\,\norm{w-v}\HH^2 \le \Re\,\dual{Aw-Av}{w-v}\quad\text{for all $v,w\in\HH$}.
  \end{align*}
  \bf
  \item\label{axiom:lipschitz}
  \rm
  \textbf{$\boldsymbol A$ is Lipschitz continuous:} There exists $L>0$ such that
  \begin{align*}
  	\norm{Aw-Av}{\HH^*} \le L\,\norm{w-v}\HH\quad\text{for all $v,w\in\HH$}.
  \end{align*}
  \bf
  \item\label{axiom:potential}
  \rm
  \textbf{$\boldsymbol{A}$ has a potential:} There exists a G\^ateaux differentiable
  function $P:\HH\to\mathbb{K}$ such that its derivative ${\rm d}P:\HH\to\HH^*$ coincides with $A$, i.e., for all $v,w\in\HH$, it holds that
  \begin{align}\label{eq:gateaux}
  \dual{Aw}{v}=\dual{{\rm d} P(w)}{v}=\lim_{\substack{r\to 0\\r\in\R}}\frac{P(w+rv)-P(w)}{r}.
  \end{align}
\end{enumerate} 
We note that~\eqref{axiom:monotone}--\eqref{axiom:lipschitz} guarantee that there exists a unique solution $u^\exact \in\HH$ to~\eqref{eq:strongform}. The latter is the limit of any sequence of Picard iterates $u^{n+1} = \Phi(u^n)$ for all $n\in\N_0$ with arbitrary initial guess $u^0\in\HH$, where the nonlinear mapping $\Phi:\HH\to\HH$ is a contraction (see Section~\ref{section:banach} for details). The additional assumption~\eqref{axiom:potential} implies that the nonlinear problem~\eqref{eq:strongform} as well as its later discretization are equivalently stated as energy minimization problems (Lemma~\ref{lemma:energy eq}).
In view of applications, we admit that~\eqref{axiom:monotone}--\eqref{axiom:lipschitz} exclude the $p$-Laplacian~\cite{veeser,dk08,bdk}, 
but cover the same problem class as, e.g.,~\cite{cw2015,gmz11,gmz}; see also~\cite{multiscale} for strongly monotone nonlinearities arising in magnetostatics.

Based on adaptive mesh-refinement of an initial triangulation $\TT_0$, our adaptive algorithm generates a sequence of conforming nested subspaces $\XX_\ell\subseteq\XX_{\ell+1}\subset\HH$, corresponding discrete solutions $u_\ell^\inexact\in\XX_\ell$, and {\sl a~posteriori} error estimators $\eta_\ell(u_\ell^\inexact)$ such that $\norm{u^\exact-u_\ell^\inexact}\HH \le \Crelexact \,\eta_\ell(u_\ell^\inexact)\to0$ as $\ell\to\infty$ at optimal algebraic rate in the sense of certain approximation classes~\cite{ckns,ffp,axioms}. 
While the plain convergence result from~\cite{gmz11} applies to various marking strategies, our convergence analysis follows the concepts from~\cite{ckns,ffp,axioms} and is hence tailored to the D\"orfler marking strategy.

Unlike~\cite{gmz,bdk}, we note that the computed discrete solutions $u_\ell^\inexact \neq u_\ell^\exact$ in general, where $u_\ell^\exact\in\XX_\ell$ is the Galerkin approximation to~\eqref{eq:strongform}, i.e.,
\begin{align}\label{eq':strongform}
 \dual{Au^\exact_\ell}{v_\ell} = \dual{F}{v_\ell}
 \quad\text{for all }v_\ell\in\XX_\ell,
\end{align}
since this discrete nonlinear system cannot be solved exactly in practice. Instead, $u_\ell^\inexact = u_\ell^n := \Phi_\ell(u_\ell^{n-1})$ is a Picard approximate to $u_\ell^\exact$ (see Section~\ref{section:discretization} for the definition of $\Phi_\ell$), and each Picard iterate can be computed by solving one linear system. Our adaptive algorithm steers both, the local mesh-refinement as well as the number of Picard iterations, where we employ \emph{nested iteration} $u_{\ell+1}^0 := u_\ell^\inexact$ to lower the number of Picard steps. 
To shorten the presentation, we did not include the iterative (and inexact) solution of the arising linear systems into the convergence analysis, but assume that these are solved exactly.
We note however that the extended analysis can be done along the lines of~\cite{stevenson07,MR3068564,MR3095916}. Details will appear in~\cite{haberlphd}.

\medskip

{\bf Outline of work.}\quad Section~\ref{section:banach} recalls the well-known proof that~\eqref{eq:strongform} admits a unique solution, since our numerical scheme relies on the Picard mapping $\Phi$ which is at the core of the mathematical argument. Section~\ref{section:discretization} comments on the discrete problem~\eqref{eq':strongform} and introduces the discrete Picard mapping $\Phi_\ell$ which is used for our iterative solver. Section~\ref{section:adaptivealgorithm} states the adaptive strategy (Algorithm~\ref{algorithm}) as well as some abstract properties~\eqref{axiom:stability}--\eqref{axiom:discrete_reliability} of the error estimator in the spirit of~\cite{axioms} which are exploited for the analysis.
We prove that nested iteration essentially yields a bounded number of discrete Picard iterations (Proposition~\ref{proposition:nested_iteration}). Linear convergence of the proposed algorithm in the sense of
\begin{align}
 \exists \Clin>0 \, \exists 0<\qlin<1 \, \forall \ell,k \in\N_0 \quad
 \eta_{\ell+k}(u_{\ell+k}^\inexact) \le \Clin\,\qlin^k\,\eta_\ell(u_\ell^\inexact)
\end{align}
is proved in Section~\ref{section:linear} (Theorem~\ref{theorem:linear_convergence}), where we exploit the property~\eqref{axiom:potential}. Optimal algebraic convergence behavior
in the sense of
\begin{align}
 \forall s>0 \, \exists \Copt>0 \, \forall \ell \in \N_0 \quad
 \eta_\ell(u_\ell^\inexact) \le \Copt\,\norm{u^\exact}{\A_s}\,\big(\#\TT_\ell-\#\TT_0+1)^{-s}
\end{align}
is proved in Section~\ref{section:optimal_rates} (Theorem~\ref{theorem:optimal_rate}), where $\norm{u^\exact}{\A_s}<\infty$ if rate $s$ is possible for the optimal meshes (see Section~\ref{section:approximation_class} for the precise definition of $\norm{u^\exact}{\A_s}$).
As a consequence of the preceding results, we also obtain that the overall computational effort of the adaptive
strategy is (almost) optimal (Theorem~\ref{theorem:optimal_comp}).
 Whereas Algorithm~\ref{algorithm} is indexed by the adaptively generated meshes, Section~\ref{section:full} gives an equivalent formulation (Algorithm~\ref{algorithm:new}), where the Picard iterations are taken into account. Throughout, the analysis of Section~\ref{section:banach}--\ref{section:full} is given in an abstract frame (in the spirit of~\cite{axioms}). 
The final Section~\ref{section:numerics} illustrates our theoretical findings with some numerical experiments, for which the estimator properties~\eqref{axiom:stability}--\eqref{axiom:discrete_reliability} are satisfied.

\bigskip

{\bf General notation.}\quad 
Throughout all statements, all constants as well as their dependencies are explicitly given. In proofs, we may abbreviate the notation by use of the symbol $\lesssim$ which indicates
$\leq$ up to some multiplicative constant which is clear from the context. Moreover, the symbol $\simeq$ states that both estimates $\lesssim$ and $\gtrsim$ hold. 
If (discrete) quantities are related to some triangulation, this is explicitly stated by use of appropriate indices, e.g., $u_\bullet$ is the discrete solution for the triangulation $\TT_\bullet$, $v_\circ$ is a generic discrete function in the discrete space $\XX_\circ$, and $\eta_\ell(\cdot)$ is the error estimator with respect to the triangulation $\TT_\ell$. 

\section{Banach fixpoint theorem}
\label{section:banach}

In this section, we prove that the model problem~\eqref{eq:strongform} admits a unique solution $u\in\HH$.
The proof follows from the Banach fixpoint theorem and relies only on
\eqref{axiom:monotone}--\eqref{axiom:lipschitz}.
Let $\product\cdot\cdot_\HH$ denote the $\HH$-scalar product.
Recall that the Riesz mapping $I_\HH:\HH\to\HH^*$, $I_\HH w:=\product{\cdot}{w}_\HH$ is (up to complex conjugation) an isometric isomorphism \cite[Chapter~III.6]{yoshi}.
Define
\begin{align}\label{eq:picard}
 \Phi v:=v-(\alpha/L^2)\,I_\HH^{-1}(Av-F)
 \quad\text{and}\quad
 q:=(1-\alpha^2/L^2)^{1/2} < 1,
\end{align}
where we note that~\eqref{axiom:monotone}--\eqref{axiom:lipschitz} imply, in particular, $\alpha\le L$.
Then,
\begin{align*}
 \norm{\Phi v -\Phi w}\HH^2
 = \norm{v-w}\HH^2 - 2\,\frac{\alpha}{L^2}\,\Re\,\product{v-w}{I_\HH^{-1}(Av-Aw)}_\HH + \frac{\alpha^2}{L^4}\,\norm{I_\HH^{-1}(Av-Aw)}\HH^2.
\end{align*}
First, note that
\begin{align*}
 \product{v-w}{I_\HH^{-1}(Av-Aw)}_\HH = \dual{Av-Aw}{v-w}.
\end{align*}
Second, note that 
\begin{align*}
 \norm{I_\HH^{-1}(Av-Aw)}\HH^2 = \norm{Av-Aw}{\HH^*}^2 \stackrel{\eqref{axiom:lipschitz}}{\le} L^2\, \norm{v-w}\HH^2.
\end{align*}
Combining these observations, we see that
\begin{align}\label{eq:picard_contraction}
 \begin{split}
 \norm{\Phi v -\Phi w}\HH^2 
 &\le \Big(1+\frac{\alpha^2}{L^2}\Big)\,\norm{v-w}\HH^2 - 2\,\frac{\alpha}{L^2}\Re\,\dual{Av-Aw}{v-w}
 \\ 
 &\reff{axiom:monotone}\le \Big(1-\frac{\alpha^2}{L^2}\Big)\,\norm{v-w}\HH^2 
 \stackrel{\eqref{eq:picard}}{=} q^2 \, \norm{v - w}\HH^2.
 \end{split}
\end{align}
Hence, $\Phi:\HH\to\HH$ is a contraction with Lipschitz constant $0<q<1$. According to the Banach fixpoint theorem, $\Phi$ has a unique fixpoint $u^\exact\in\HH$, i.e., $u^\exact = \Phi u^\exact$. By definition of $\Phi$, the strong form~\eqref{eq:strongform} is equivalent to $u^\exact = \Phi u^\exact$. 
Overall,~\eqref{eq:strongform} admits a unique solution.

Moreover, the Banach fixpoint theorem guarantees that for each initial guess $u^0\in\HH$, the Picard iteration $u^{n}:=\Phi u^{n-1}$ converges to $u^\exact$ as $n\to\infty$.  Note that, for all $n\in\N$, 
\begin{align*}
 \norm{u^\exact-u^n}\HH = \norm{\Phi u^\exact - \Phi u^{n-1}}\HH
 \stackrel{\eqref{eq:picard_contraction}}{\le} q\,\norm{u^\exact-u^{n-1}}\HH
 \le q\,\norm{u^\exact-u^n}\HH + q\,\norm{u^n-u^{n-1}}\HH.
\end{align*}
Rearranging this estimate and induction on $n$ with~\eqref{eq:picard_contraction}, we derive the following well-known {\sl a~posteriori} and {\sl a~priori} estimate for the Picard iterates,
\begin{subequations}\label{eq:estimate}
\begin{align}
 \norm{u^\exact-u^n}\HH \le \frac{q}{1-q}\,\norm{u^n-u^{n-1}}\HH
 \stackrel{\eqref{eq:picard_contraction}}{\le} \frac{q^n}{1-q}\,\norm{u^1-u^0}\HH.
\end{align}
Moreover, it holds that
\begin{align}
\norm{u^n-u^{n-1}}\HH \leq \norm{u^\exact-u^n}\HH  + \norm{u^\exact-u^{n-1}}\HH  
 	\stackrel{\eqref{eq:picard_contraction}}{\leq} (1+q) \, \norm{u^\exact-u^{n-1}}\HH.
\end{align}
Thus, the {\sl a~posteriori} computable term $\norm{u^n-u^{n-1}}\HH$ provides an upper bound
for $\norm{u^\exact-u^n}\HH$ as well as a lower bound for $\norm{u^\exact-u^{n-1}}\HH$.
\end{subequations}

\section{Discretization and {a~priori} error estimation}
\label{section:discretization}

\subsection{Nonlinear discrete problem}
Suppose that $\XX_\plus \subset \HH$ is a conforming discrete subspace of $\HH$. 
If~\eqref{axiom:monotone}--\eqref{axiom:lipschitz} are satisfied, the restriction 
$A_\plus:\XX_\plus\to\XX_\plus^*$ of $A$ is
strongly monotone and Lipschitz continuous,
even with the same constants $\alpha,L>0$ for~\eqref{axiom:monotone}--\eqref{axiom:lipschitz} as in the continuous case. In particular, there exists a unique  solution $u_\plus^\exact \in\XX_\plus$ to
\begin{align}\label{eq:discrete}
 \dual{Au_\plus^\exact}{v_\plus} = \dual{F}{v_\plus} 
 \quad\text{for all }v_\plus\in\XX_\plus.
\end{align}
First, recall the following well-known C\'ea-type estimate for strongly monotone operators. 
We include its proof for the sake of completeness.

\begin{lemma}\label{lemma:cea}
Suppose that the 
operator $A$ satisfies~\eqref{axiom:monotone}--\eqref{axiom:lipschitz}. Then, it holds that

\begin{align}
\norm{u^\exact-u_\plus^\exact}\HH \le \frac{L}{\alpha}\,\min_{w_\plus\in\XX_\plus}\norm{u^\exact-w_\plus}\HH.
\end{align}	
\end{lemma}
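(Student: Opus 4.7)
The plan is to combine strong monotonicity, Lipschitz continuity, and a Galerkin orthogonality argument in the standard way. First I would test monotonicity on the error itself: by \eqref{axiom:monotone},
\begin{align*}
\alpha\,\norm{u^\exact-u_\plus^\exact}\HH^2 \le \Re\,\dual{Au^\exact-Au_\plus^\exact}{u^\exact-u_\plus^\exact}.
\end{align*}
The key observation is that subtracting \eqref{eq:discrete} from \eqref{eq:strongform} yields the Galerkin orthogonality
\begin{align*}
\dual{Au^\exact-Au_\plus^\exact}{v_\plus}=0 \quad\text{for all } v_\plus\in\XX_\plus,
\end{align*}
which (in the nonlinear setting) is orthogonality of the residual rather than of the error.

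Using this, for arbitrary $w_\plus\in\XX_\plus$ I would split $u^\exact-u_\plus^\exact = (u^\exact-w_\plus) + (w_\plus-u_\plus^\exact)$ inside the duality pairing. The second summand belongs to $\XX_\plus$ and is killed by Galerkin orthogonality, so
\begin{align*}
\alpha\,\norm{u^\exact-u_\plus^\exact}\HH^2 \le \Re\,\dual{Au^\exact-Au_\plus^\exact}{u^\exact-w_\plus}.
\end{align*}
Then I apply the duality estimate and Lipschitz continuity \eqref{axiom:lipschitz} to bound
\begin{align*}
\Re\,\dual{Au^\exact-Au_\plus^\exact}{u^\exact-w_\plus} \le \norm{Au^\exact-Au_\plus^\exact}{\HH^*}\,\norm{u^\exact-w_\plus}\HH \le L\,\norm{u^\exact-u_\plus^\exact}\HH\,\norm{u^\exact-w_\plus}\HH.
\end{align*}

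Dividing by $\norm{u^\exact-u_\plus^\exact}\HH$ (the case where this norm vanishes is trivial) and taking the infimum over $w_\plus\in\XX_\plus$ yields the claimed factor $L/\alpha$. Since $\XX_\plus$ is a closed subspace of the Hilbert space $\HH$, the infimum is attained and can be written as a minimum. I do not anticipate any real obstacle: the only subtlety is that for nonlinear $A$ one must orthogonalize the residual $Au^\exact-Au_\plus^\exact$ against $\XX_\plus$ rather than the error directly, but \eqref{axiom:monotone} and \eqref{axiom:lipschitz} combine exactly in the right way to convert this back into an error estimate with the quasi-optimality constant $L/\alpha$.
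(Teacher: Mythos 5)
Your argument is correct and coincides with the paper's own proof: both use strong monotonicity on the error, Galerkin orthogonality to replace $u_\plus^\exact$ by an arbitrary $w_\plus$, and Lipschitz continuity to close the estimate. The only cosmetic difference is that you justify attainment of the infimum via the projection theorem for closed subspaces, while the paper invokes finite dimensionality of $\XX_\plus$; both are valid here.
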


\begin{proof}
Note the Galerkin orthogonality $\dual{Au^\exact-Au_\plus^\exact}{v_\plus} = 0$ for all $v_\plus\in\XX_\plus$.
For $w_\plus\in\XX_\plus$ and $u^\star\neq u^\star_\bullet$, this results in
\begin{align*}
 \alpha\,\norm{u^\exact-u_\plus^\exact}\HH
 \reff{axiom:monotone}\le \frac{\Re \, \dual{Au^\exact-Au_\plus^\exact}{u^\exact-u_\plus^\exact}}{\norm{u^\exact-u_\plus^\exact}\HH}
 = \frac{\Re \, \dual{Au^\exact-Au_\plus^\exact}{u^\exact-w_\plus}}{\norm{u^\exact-u_\plus^\exact}\HH}
 &\reff{axiom:lipschitz}\le L\,\norm{u^\exact-w_\plus}\HH.
\end{align*}%
Finite dimension concludes that the infimum over all $w_\plus \in \XX_\plus$ is, in fact, attained.
\end{proof}

\subsection{Linearized discrete problem}
\label{section:discreteproblem}
Note that the nonlinear system~\eqref{eq:discrete} can hardly be solved exactly. 
With the discrete Riesz mapping $I_\plus:\XX_\plus\to\XX_\plus^*$ and the restriction $F_\plus\in\XX_\plus^*$ of $F$ to $\XX_\plus$, define $\Phi_\plus:\XX_\plus\to\XX_\plus$ by $\Phi_\plus v_\plus:=v_\plus-(\alpha/L^2)I_\plus^{-1}(A_\plus v_\plus-F_\plus)$.
Given $u_\plus^{n} \in \XX_\plus$, we thus compute the discrete Picard iterate $u_\plus^{n} := \Phi_\plus u^{n-1}_\plus$ as follows:
\begin{itemize}
	\item[(i)] Solve the linear system $(v_\plus, w_\plus)_\HH = \dual{A u_\plus^{n-1} - F}{v_\plus}$ for all $v_\plus \in \XX_\plus$.
	\item[(ii)] Define $u_\plus^{n} := u_\plus^{n-1} - \frac{\alpha}{L^2} w_\plus$.
\end{itemize}
Then, $\Phi_\plus$ is a contraction (cf.\ Section~\ref{section:banach}), and for each initial guess $u^0_\plus\in\XX_\plus$, the Picard iteration $u^{n+1}_\plus = \Phi_\plus u^n_\plus$ converges to $u_\plus^\exact$ as $n\to\infty$. 
Moreover, the error estimates~\eqref{eq:estimate} also hold for the discrete Picard iteration, i.e., for all $n\in\N$, it holds that
\begin{align}\label{eq:estimate:discrete}
\begin{split}
 \norm{u_\plus^\exact-u_\plus^n}\HH &\le \frac{q}{1\!-\!q}\,\norm{u_\plus^n-u_\plus^{n-1}}\HH 
 \le \min\Big\{\frac{q^n}{1\!-\!q}\norm{u_\plus^1-u_\plus^0}\HH,\,\frac{q(1\!+\!q)}{1\!-\!q}\norm{u_\plus^\exact-u_\plus^{n-1}}\HH\Big\}.\hspace*{-2mm}
\end{split}
\end{align}
Finally, we recall the following {\sl a~priori} estimate for the discrete Picard iteration from \cite[Proposition~2.1]{cw2015} and also include its simple proof for the sake of completeness:

\begin{lemma}\label{lemma:apriori}
Suppose that the 
operator $A$ satisfies~\eqref{axiom:monotone}--\eqref{axiom:lipschitz}. Then, it holds that
\begin{align}
\norm{u^\exact-u_\plus^n}\HH \le \frac{L}{\alpha}\,\min_{w_\plus\in\XX_\plus}\norm{u^\exact-w_\plus}\HH 
+ \frac{q^n}{1-q}\,\norm{u_\plus^1-u_\plus^0}\HH
\quad\text{for all }n\in\N.
\end{align}
\end{lemma}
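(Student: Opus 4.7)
The statement is exactly the kind of estimate that decomposes the total error into a best-approximation part (the discretization error) plus an iteration tail. My plan is therefore to split
\[
 \norm{u^\exact - u_\plus^n}\HH \le \norm{u^\exact - u_\plus^\exact}\HH + \norm{u_\plus^\exact - u_\plus^n}\HH
\]
by the triangle inequality and to estimate the two summands separately using results already established in the excerpt.

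For the first term, I would simply invoke the C\'ea-type estimate from Lemma~\ref{lemma:cea}, which yields directly
\[
 \norm{u^\exact - u_\plus^\exact}\HH \le \frac{L}{\alpha}\,\min_{w_\plus \in \XX_\plus}\norm{u^\exact - w_\plus}\HH.
\]
For the second term, I would appeal to the {\sl a~priori} bound in~\eqref{eq:estimate:discrete} applied to the discrete Picard iteration, namely
\[
 \norm{u_\plus^\exact - u_\plus^n}\HH \le \frac{q^n}{1-q}\,\norm{u_\plus^1 - u_\plus^0}\HH,
\]
which was derived precisely by iterating the contraction property of $\Phi_\plus$ starting from the initial guess $u_\plus^0$. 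Summing the two bounds concludes the proof.

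There is no real obstacle here: the lemma is a straightforward corollary of Lemma~\ref{lemma:cea} and the contraction estimates~\eqref{eq:estimate:discrete}. The only minor point one has to observe is that the contraction constant $q$ and the constants $\alpha,L$ are the same on $\HH$ and on $\XX_\plus$, which was already noted at the beginning of Section~\ref{section:discretization} when the restricted operator $A_\plus$ was introduced. Hence the proof is essentially a one-line triangle inequality followed by two quotations.
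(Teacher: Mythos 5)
Your proof is correct and follows exactly the same route as the paper: a triangle inequality splitting off the Galerkin error, Lemma~\ref{lemma:cea} for the first term, and the contraction-based estimate~\eqref{eq:estimate:discrete} for the second. There is nothing to add.
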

\begin{proof}
With
$\displaystyle \norm{u^\exact-u_\plus^n}\HH
 \le \norm{u^\exact-u_\plus^\exact}\HH + \norm{u_\plus^\exact-u_\plus^n}\HH
 \reff{eq:estimate:discrete}\le \norm{u^\exact-u_\plus^\exact}\HH + \frac{q^n}{1-q}\,\norm{u_\plus^1-u_\plus^0}\HH,$
the proof follows from the C\'ea-type estimate of Lemma~\ref{lemma:cea}.
\end{proof}

\begin{remark}\label{remark1}
Note that $u_\plus^1 = \Phi_\plus u_\plus^0$ implies that
\begin{align*}
 \product{u_\plus^1}{v_\plus}_\HH = \product{u_\plus^0}{v_\plus}_\HH
 - \frac{\alpha}{L^2}\,\dual{A u_\plus^0-F}{v_\plus}
 \quad\text{for all }v_\plus\in\XX_\plus.
\end{align*}
For $v_\plus = u_\plus^1-u_\plus^0$, this reveals that
\begin{align*}
 \norm{u_\plus^1-u_\plus^0}{\HH}^2 
 = - \frac{\alpha}{L^2}\,\dual{A u_\plus^0-F}{u_\plus^1-u_\plus^0} 
 &\le \frac{\alpha}{L^2}\,\norm{Au_\plus^0-F}{\HH^*}\,\norm{u_\plus^1-u_\plus^0}{\HH}.
\end{align*}
Consequently, we get
\begin{align}\label{eq:remark_normbounded}
 \norm{u_\plus^1-u_\plus^0}\HH 
 \le \frac{\alpha}{L^2}\,\norm{Au_\plus^0-F}{\HH^*}
  \stackrel{\eqref{eq:strongform}}{=} \frac{\alpha}{L^2} \, \norm{A u_\plus^0 - Au^\exact}{\HH^*} 
 \stackrel{\eqref{axiom:lipschitz}}{\leq} \frac{\alpha}{L}\,\norm{u_\plus^0 - u^\exact}{\HH}.
\end{align}
\noindent
Therefore, boundedness of $\norm{u_\plus^1-u_\plus^0}\HH$ in the {\sl a~priori} estimate of Lemma~\ref{lemma:apriori} can be guaranteed independently of the space $\XX_\plus \subset \HH$ by choosing, e.g., $u_\plus^0:=0$.
If $\min_{w_\plus\in\XX_\plus}\norm{u^\exact-w_\plus}\HH = \OO(N^{-s})$ for some $s>0$ and with $N>0$ being the degrees of freedom associated with $\XX_\plus$, this suggests the choice $n = \OO(\log N)$ in Lemma~\ref{lemma:apriori}; see the discussion in~\cite[Remark~3.7]{cw2015}. Moreover, we shall see below that the choice of $u_\plus^0$ by nested iteration generically leads to $n = \OO(1)$; see Proposition~\ref{proposition:nested_iteration} below.
\qed
\end{remark}

\section{Adaptive algorithm}
\label{section:adaptivealgorithm}

\subsection{Basic properties of mesh-refinement}
Suppose that all considered discrete spaces $\XX_\plus\subset\HH$ are associated with a triangulation $\TT_\plus$ of a fixed bounded Lipschitz domain $\Omega\subset\R^d$, $d\ge2$. 
Suppose that $\refine(\cdot)$ is a fixed mesh-refinement strategy. 

Given a triangulation $\TT_\bullet$ and $\MM_\bullet \subseteq \TT_\bullet$, let $\TT_\circ := \refine(\TT_\bullet,\MM_\bullet)$ be the coarsest triangulation such that all 
marked elements $T \in \TT_\bullet$ have been refined, i.e., $\MM_\bullet \subseteq \TT_\bullet \setminus \TT_\circ$.

We write $\TT_\circ\in\refine(\TT_\plus)$ if $\TT_\circ$ is obtained by a finite number of refinement steps, i.e., there exists $n\in\N_0$ as well as a finite sequence $\TT_{(0)},\dots,\TT_{(n)}$ of triangulations and corresponding sets $\MM_{(j)}\subseteq\TT_{(j)}$ such that
\begin{itemize}
	\item $\TT_\plus = \TT_{(0)}$,
	\item $\TT_{(j+1)} = \refine(\TT_{(j)},\MM_{(j)})$ for all $j=0,\dots,n-1$,
	\item $\TT_\circ = \TT_{(n)}$.
\end{itemize}
In particular, $\TT_\plus\in\refine(\TT_\plus)$. Further, we suppose that refinement $\TT_\circ\in \refine(\TT_\plus)$ yields nestedness $\XX_\plus\subseteq\XX_\circ$ of the corresponding discrete spaces.

In view of the adaptive algorithm (Algorithm \ref{algorithm} below), let $\TT_0$ be a fixed initial triangulation. 
 To ease notation, let $\T:=\refine(\TT_0)$ be the set of all possible triangulations which can be obtained by successively refining $\TT_0$.
 
\subsection{\textsl{A posteriori} error estimator}
\label{subsection:errorestimator}
Suppose that for each $T\in\TT_\plus\in\T$ and each discrete function $v_\plus\in\XX_\plus$, one can compute an associated \emph{refinement indicator} $\eta_\plus(T,v_\plus)\ge0$. To abbreviate notation, let
\begin{align}\label{eq:formalestiamtor}
 \eta_\plus(v_\plus) := \eta_\plus(\TT_\plus,v_\plus),
 \quad\text{where}\quad
 \eta_\plus(\UU_\plus,v_\plus) := \Big(\sum_{T\in\UU_\plus}\eta_\plus(T,v_\plus)^2\Big)^{1/2}
 \quad\text{for all }\UU_\plus\subseteq\TT_\plus.
\end{align}
We suppose the following properties with fixed constants $\Cstab,\Cred,\Crelexact,\Cdrel \ge 1$ and $0<\qred<1$ which slightly generalize those {\em axioms of adaptivity} of~\cite{axioms}:
\begin{enumerate}
\renewcommand{\theenumi}{A\arabic{enumi}}
\bf
\item\label{axiom:stability}
\rm
\textbf{stability on non-refined element domains:} For all triangulations $\TT_\plus\in\T$ and $\TT_\circ\in\refine(\TT_\plus)$, arbitrary discrete functions $v_\plus\in\XX_\plus$ and $v_\circ\in\XX_\circ$, and an arbitrary set $\UU_\plus\subseteq\TT_\plus\cap\TT_\circ$ of non-refined elements, it holds that
$$|\eta_\circ(\UU_\plus,v_\circ)-\eta_\plus(\UU_\plus,v_\plus)| \le \Cstab\,\norm{v_\plus-v_\circ}\HH.$$
\bf
\item\label{axiom:reduction}
\rm
\textbf{reduction on refined element domains:} For all triangulations $\TT_\plus\in\T$ and $\TT_\circ\in\refine(\TT_\plus)$, and arbitrary 
$v_\plus\in\XX_\plus$ and $v_\circ\in\XX_\circ$, it holds that
$$\eta_\circ(\TT_\circ\backslash\TT_\plus,v_\circ)^2 \le\qred\,\eta_\plus(\TT_\plus\backslash\TT_\circ,v_\plus)^2 + \Cred\,\norm{v_\circ-v_\plus}\HH^2.$$
\bf
\item\label{axiom:reliability}
\rm
\textbf{reliability:} For all triangulations $\TT_\plus\in\T$, the error of the discrete solution $u_\plus^\exact\in\XX_\plus$ to~\eqref{eq:discrete} is controlled by $$\norm{u^\exact-u_\plus^\exact}\HH\le\Crelexact\,\eta_\plus(u_\plus^\exact).$$
\bf
\item\label{axiom:discrete_reliability}
\rm
\textbf{discrete reliability:}
For all $\TT_\plus\in\T$ and all $\TT_\circ\in\refine(\TT_\plus)$, there exists a set $\RR_{\plus,\circ}\subseteq\TT_\plus$ 
with $\TT_\plus\backslash\TT_\circ \subseteq \RR_{\plus,\circ}$ as well as $\#\RR_{\plus,\circ} \le \Cdrel\,\#(\TT_\plus\backslash\TT_\circ)$ such that the difference of the discrete solutions $u_\plus^\exact \in \XX_\plus$ and $u_\circ^\exact \in \XX_\circ$ is controlled by
\begin{align*}
	\norm{u_\circ^\exact-u_\plus^\exact}{\HH} \le \Cdrel\,\eta_\plus(\RR_{\plus,\circ},u_\plus^\exact).
\end{align*}
\end{enumerate}

\begin{remark}
	Suppose the following approximation property of $u^\exact \in \HH$: For all $\TT_\bullet \in \T$ and all $\varepsilon >0$,
	there exists a refinement $\TT_\circ \in \refine(\TT_\bullet)$  such that $\norm{u^\exact - u_\circ^\exact}{\HH} \leq \varepsilon$.
	Then, discrete reliability~\eqref{axiom:discrete_reliability} already implies reliability~\eqref{axiom:reliability}; 
	see \cite[Lemma 3.4]{axioms}.
\end{remark}

We note that~\eqref{axiom:reliability}--\eqref{axiom:discrete_reliability} are formulated for the non-computable Galerkin solutions $u_\plus^\exact\in\XX_\plus$ to~\eqref{eq:discrete}, while Algorithm~\ref{algorithm} below generates approximations $u_\plus^{n}\approx u_\plus^\exact\in\XX_\plus$. The following lemma proves that reliability~\eqref{axiom:reliability} transfers to certain Picard iterates.

\begin{lemma}\label{lemma:reliability}
Suppose~\eqref{axiom:monotone}--\eqref{axiom:lipschitz} for the 
operator $A$ as well as stability~\eqref{axiom:stability} and reliability~\eqref{axiom:reliability} for the {\sl a~posteriori} error estimator. Let $\lambda >0$, $n \in \N$, and $u_\bullet^0\in\XX_\bullet$ and suppose that the Picard iterate $u_\plus^n\in\XX_\plus$ satisfies $\norm{u_\plus^{n} - u_\plus^{n-1}}{\HH} \le \lambda \eta_\plus(u_\plus^n)$. Then, it holds
that
\begin{align}\label{eq:reliability}
 \norm{u^\exact-u_\plus^n}\HH 
 \le \Crel^\inexact\,\eta_\plus(u_\plus^n)
 \quad\text{with}\quad
 \Crel^\inexact := \Crelexact + \lambda\,(1+\Crelexact\Cstab)\,\frac{q}{1-q}.
\end{align}
\end{lemma}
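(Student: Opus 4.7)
The strategy is the classical triangle-inequality split between exact and inexact solutions, combined with the estimator-stability trick that lets us swap the argument of $\eta_\plus$ from the (uncomputable) Galerkin solution $u_\plus^\exact$ to the (computable) Picard iterate $u_\plus^n$. More precisely, I would start from
\begin{align*}
 \norm{u^\exact-u_\plus^n}\HH
 \le \norm{u^\exact-u_\plus^\exact}\HH + \norm{u_\plus^\exact-u_\plus^n}\HH,
\end{align*}
and handle the two summands separately.

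For the first summand, reliability~\eqref{axiom:reliability} gives $\norm{u^\exact-u_\plus^\exact}\HH \le \Crelexact\,\eta_\plus(u_\plus^\exact)$, but since this bound involves $u_\plus^\exact$ rather than $u_\plus^n$, I would apply stability~\eqref{axiom:stability} in the trivial case $\TT_\circ=\TT_\plus$ and $\UU_\plus=\TT_\plus$ to obtain
\begin{align*}
 \eta_\plus(u_\plus^\exact) \le \eta_\plus(u_\plus^n) + \Cstab\,\norm{u_\plus^\exact-u_\plus^n}\HH.
\end{align*}
For the second summand, I would invoke the discrete \emph{a~posteriori} estimate~\eqref{eq:estimate:discrete} for the Picard iteration together with the hypothesis $\norm{u_\plus^n-u_\plus^{n-1}}\HH \le \lambda\,\eta_\plus(u_\plus^n)$ to get
\begin{align*}
 \norm{u_\plus^\exact-u_\plus^n}\HH \le \frac{q}{1-q}\,\norm{u_\plus^n-u_\plus^{n-1}}\HH \le \frac{q\,\lambda}{1-q}\,\eta_\plus(u_\plus^n).
\end{align*}

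Combining these three inequalities, the $\norm{u_\plus^\exact-u_\plus^n}\HH$ term produced by stability is itself controlled by the Picard \emph{a~posteriori} bound, so a single substitution yields
\begin{align*}
 \norm{u^\exact-u_\plus^n}\HH
 &\le \Crelexact\bigl(\eta_\plus(u_\plus^n)+\Cstab\,\norm{u_\plus^\exact-u_\plus^n}\HH\bigr) + \frac{q\,\lambda}{1-q}\,\eta_\plus(u_\plus^n) \\
 &\le \Bigl(\Crelexact + \lambda\,(1+\Crelexact\Cstab)\,\frac{q}{1-q}\Bigr)\,\eta_\plus(u_\plus^n),
\end{align*}
which is exactly the claimed bound. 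There is no real obstacle here: the only subtlety is recognising that~\eqref{axiom:stability} is applicable with $\TT_\circ=\TT_\plus$ so that every element is ``non-refined'', turning it into a local Lipschitz estimate for $\eta_\plus$ in its second argument, and that the a posteriori estimate from Section~\ref{section:banach} transfers to the discrete Picard iteration without any change of constants.
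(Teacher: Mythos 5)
Your proof is correct and follows essentially the same route as the paper: the same triangle-inequality split between $u^\exact-u_\plus^\exact$ and $u_\plus^\exact-u_\plus^n$, the same use of reliability together with stability (with $\TT_\circ=\TT_\plus$) to swap the estimator argument from $u_\plus^\exact$ to $u_\plus^n$, and the same discrete a~posteriori Picard bound~\eqref{eq:estimate:discrete} combined with the stopping criterion to control $\norm{u_\plus^\exact-u_\plus^n}\HH$.
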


\begin{proof}
Reliability~\eqref{axiom:reliability} and stability~\eqref{axiom:stability} prove that
\begin{align*}
 \norm{u^\exact-u_\plus^n}\HH 
 \le \norm{u^\exact-u_\plus^\exact}\HH + \norm{u_\plus^\exact-u_\plus^n}\HH
 &\reff{axiom:reliability}\le \Crelexact\,\eta_\plus(u_\plus^\exact) + \norm{u_\plus^\exact-u_\plus^n}\HH
 \\&
 \reff{axiom:stability}\le \Crelexact\,\eta_\plus(u_\plus^n) + (1+\Crelexact\Cstab)\,\norm{u_\plus^\exact-u_\plus^n}\HH.
\end{align*}
The {\sl a~posteriori} estimate~\eqref{eq:estimate:discrete} together with the assumption on $u_\plus^n$ yields that
\begin{align*}
 \norm{u_\plus^\exact-u_\plus^n}\HH 
 \reff{eq:estimate:discrete}\le \frac{q}{1-q}\,\norm{u_\plus^n-u_\plus^{n-1}}\HH \le \lambda\,\frac{q}{1-q}\,\eta_\plus(u_\plus^n).
\end{align*}
Combining these estimates, we conclude the proof.
\end{proof}

\subsection{Adaptive algorithm}
In the sequel, we analyze the following adaptive algorithm which ---up to a different {\sl a~posteriori} error estimation based on \emph{elliptic reconstruction}--- is also considered in~\cite{cw2015}.

\begin{algorithm}\label{algorithm}
\textsc{Input:} Initial triangulation $\TT_0$, adaptivity parameters $0<\theta\le 1$, $\lambda>0$ and $\Cmark\ge1$, arbitrary initial guess $u_0^0\in\XX_0$, e.g., $u_0^0:=0$.\\
\textsc{Adaptive loop:} For all $\ell=0,1,2,\dots$, iterate the following steps~{\rm(i)--(iii)}.
\begin{enumerate}
\item[\rm(i)]Repeat the following steps~{\rm(a)--(b)} for all $n=1,2,\dots$, until 
$\norm{u_\ell^n-u_\ell^{n-1}}\HH\le \lambda\,\eta_\ell(u_\ell^n)$.
\begin{enumerate}
\item[\rm (a)] Compute discrete Picard iterate $u_\ell^n = \Phi_\ell u_\ell^{n-1}\in\XX_\ell$.
\item[\rm (b)] Compute refinement indicators $\eta_\ell(T,u_\ell^n)$ for all $T\in\TT_\ell$.
\end{enumerate}
\item[\rm(ii)] Define $u_\ell^\inexact := u_\ell^n \in \XX_\ell$ and determine a set $\MM_\ell\subseteq\TT_\ell$ of marked elements which has minimal cardinality up to the multiplicative constant $\Cmark$ and which satisfies the D\"orfler marking criterion $\theta\,\eta_\ell(u_\ell^\inexact) \le \eta_\ell(\MM_\ell,u_\ell^\inexact)$.
\item[\rm(iii)] Generate the new triangulation $\TT_{\ell+1} := \refine(\TT_\ell,\MM_\ell)$ by refinement of (at least) all marked elements $T\in\MM_\ell$ and define $u_{\ell+1}^0 := u_\ell^\inexact \in\XX_\ell\subseteq\XX_{\ell+1}$.
\end{enumerate}
\textsc{Output:} Sequence of discrete solutions $u_\ell^\inexact\in\XX_\ell$ and corresponding estimators $\eta_\ell(u_\ell^\inexact)$.\qed
\end{algorithm}

The following two results analyze the (lucky) breakdown of Algorithm~\ref{algorithm}. 
The first proposition shows that, if the repeat loop of step (i) does not terminate after finitely many steps, 
then the exact solution $u^\exact=u_\ell^\exact$ belongs to the discrete space $\XX_\ell$.

\begin{proposition}\label{proposition:repeat}
	Suppose~\eqref{axiom:monotone}--\eqref{axiom:lipschitz} for the nonlinear operator $A$ 
	as well as stability~\eqref{axiom:stability} and reliability~\eqref{axiom:reliability} for the {\sl a~posteriori} error estimator. 
	Let $\lambda >0$ and assume that step~{\rm(i)} in Algorithm~\ref{algorithm} does not terminate for some fixed $\ell \in \N_0$, i.e., 
	$\norm{u_\ell^n - u_\ell^{n-1}}{\HH} > \lambda \eta_\ell(u_\ell^n)$ for all $n \in \N$. Define $u_{-1}^\inexact:=0$ if $\ell=0$. Then, there holds
	$u^\exact = u_\ell^\exact \in \XX_\ell$ and
	\begin{align}\label{eq:geometric}
		\eta_\ell(u_\ell^n) < \lambda^{-1} q^{n-1} \frac{\alpha}{L} \,  \norm{u^\exact - u_{\ell-1}^\inexact}{\HH} \xrightarrow{n\to\infty} 0 = \eta_\ell(u^\exact).
	\end{align}  
\end{proposition}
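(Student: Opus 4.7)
The plan is to combine three ingredients already established: the geometric decay of Picard increments (Section~\ref{section:banach}), the a~priori bound of the first increment from Remark~\ref{remark1}, and the stability/reliability of the estimator. The non-termination hypothesis directly converts the smallness of Picard increments into the smallness of the estimator, and the rest is routine.

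First, I would iterate the contraction estimate~\eqref{eq:picard_contraction} applied to $\Phi_\ell$ to obtain, for all $n\in\N$,
\begin{align*}
 \norm{u_\ell^n - u_\ell^{n-1}}\HH
 = \norm{\Phi_\ell u_\ell^{n-1} - \Phi_\ell u_\ell^{n-2}}\HH
 \le q^{n-1}\,\norm{u_\ell^1 - u_\ell^0}\HH.
\end{align*}
Next, Remark~\ref{remark1}, namely the bound~\eqref{eq:remark_normbounded} applied in $\XX_\ell$, gives
$\norm{u_\ell^1 - u_\ell^0}\HH \le \tfrac{\alpha}{L}\,\norm{u^\exact - u_\ell^0}\HH$.
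By the nested-iteration step~{\rm(iii)} in Algorithm~\ref{algorithm} (and the convention $u_{-1}^\inexact := 0 = u_0^0$ for $\ell=0$), we have $u_\ell^0 = u_{\ell-1}^\inexact$, whence
\begin{align*}
 \norm{u_\ell^n - u_\ell^{n-1}}\HH \le q^{n-1}\,\tfrac{\alpha}{L}\,\norm{u^\exact - u_{\ell-1}^\inexact}\HH.
\end{align*}
The failure hypothesis $\norm{u_\ell^n - u_\ell^{n-1}}\HH > \lambda\,\eta_\ell(u_\ell^n)$ then yields the quantitative claim~\eqref{eq:geometric}.

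In particular, $\eta_\ell(u_\ell^n) \to 0$ as $n\to\infty$. Since $\Phi_\ell$ is a contraction, $u_\ell^n \to u_\ell^\exact$ in $\HH$, so stability~\eqref{axiom:stability} applied with $\UU_\plus = \TT_\plus = \TT_\circ = \TT_\ell$ gives
\begin{align*}
 |\eta_\ell(u_\ell^\exact) - \eta_\ell(u_\ell^n)| \le \Cstab\,\norm{u_\ell^\exact - u_\ell^n}\HH \xrightarrow{n\to\infty} 0,
\end{align*}
so that $\eta_\ell(u_\ell^\exact) = 0$. Reliability~\eqref{axiom:reliability} then forces $\norm{u^\exact - u_\ell^\exact}\HH \le \Crelexact\,\eta_\ell(u_\ell^\exact) = 0$, i.e., $u^\exact = u_\ell^\exact \in \XX_\ell$, and a final application of stability (now with $v_\circ = u^\exact = u_\ell^\exact$ and $v_\plus = u_\ell^n$) confirms $\eta_\ell(u^\exact) = 0$.

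The only mildly subtle point is the bookkeeping for $\ell = 0$: one must check that the convention $u_{-1}^\inexact := 0$ stated in the proposition is consistent with the bound from Remark~\ref{remark1}, which uses $u_\ell^0$ on the right-hand side. This is transparent once one recognizes that the algorithm's nested-iteration rule $u_\ell^0 = u_{\ell-1}^\inexact$ for $\ell \ge 1$ is complemented, in the case $\ell=0$, by the convention above (matching the standard initial guess $u_0^0 := 0$). No further nontrivial estimate is needed beyond those already cited.
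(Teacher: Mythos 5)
Your argument is correct and matches the paper's proof in all essentials: geometric decay of the Picard increments via~\eqref{eq:picard_contraction}, the bound~\eqref{eq:remark_normbounded} for the first increment, the failure hypothesis to convert increment decay into estimator decay, and stability plus reliability to pass to the limit and conclude $u^\exact=u_\ell^\exact$. The only difference is one of ordering (you derive~\eqref{eq:geometric} first and then let $n\to\infty$, while the paper first takes the limit to identify $u^\exact=u_\ell^\exact$ and then records the bound), and your closing application of stability for $\eta_\ell(u^\exact)=0$ is redundant once $u^\exact=u_\ell^\exact$ is in hand; neither point affects correctness.
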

\begin{proof}
	According to \eqref{axiom:stability}, $\eta_\ell(v_\ell)$ depends Lipschitz continuously on $v_\ell \in \XX_\ell$. 
	Moreover, it holds that $\norm{u_\ell^\exact - u_\ell^n}{\HH} \rightarrow 0$ and hence  $\norm{u_\ell^n - u_\ell^{n-1}}{\HH} \rightarrow 0$
	as $n \to \infty$. This proves that
	\begin{align*}
		\norm{u^\exact - u_\ell^\exact}{\HH} \reff{axiom:reliability}\lesssim \eta_\ell(u_\ell^\exact)
		= \lim_{n \to \infty} \eta_\ell(u_\ell^{n}) \leq \lambda^{-1} \lim_{n \to \infty} \norm{u_\ell^n - u_\ell^{n-1}}{\HH} = 0.
	\end{align*}
	Hence, $u^\exact= u_\ell^\exact \in \XX_\ell$ and $\eta_\ell(u^\exact) = \eta_\ell(u_\ell^\exact) =0$.
	With $\lambda \, \eta_\ell(u_\ell^{n}) < \norm{u_\ell^n - u_\ell^{n-1}}{\HH}$, we see that
	\begin{align*}
	 \eta_\ell(u_\ell^{n}) < \lambda^{-1} \norm{u_\ell^n - u_\ell^{n-1}}{\HH} \stackrel{\eqref{eq:picard_contraction}}{\leq} 
	\lambda^{-1} q^{n-1} \, \norm{u_\ell^1 - u_\ell^0}\HH
	\reff{eq:remark_normbounded}\le \lambda^{-1}\,q^{n-1} \frac{\alpha}{L} \, \norm{u^\exact - u_{\ell-1}^\inexact}\HH.
	\end{align*}
	This concludes \eqref{eq:geometric}.
\end{proof}

The second proposition shows that, if the repeat loop of step~(i) does terminate with $\eta_\ell(u_\ell)=0$, then $u_\ell = u^\exact$ as well as $\eta_k(u_k)=0$ and $u_k=u_k^1=u^\star$ for all $k>\ell$.

\begin{proposition}\label{prop:neww}
Suppose~\eqref{axiom:monotone}--\eqref{axiom:lipschitz} for the 
operator $A$ as well as stability~\eqref{axiom:stability} and reliability~\eqref{axiom:reliability} for the 
error estimator.
If step~{\rm(i)} in Algorithm~\ref{algorithm} terminates with $\eta_\ell(u_\ell^\inexact)=0$ 
(or equivalently with $\MM_\ell = \emptyset$ in step~{\rm(ii)}) for some $\ell\in\N_0$, then 
$u_k^\inexact=u^\exact$ as well as $\MM_k = \emptyset$ for all $k \geq \ell$. Moreover, for all $k \geq \ell+1$, step~{\rm(i)} terminates after one iteration. 
\end{proposition}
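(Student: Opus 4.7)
My plan has three main steps: invoking Lemma~\ref{lemma:reliability} to identify $u_\ell^\inexact$ with $u^\exact$ at the ``lucky breakdown'' level, running a one-step Picard argument on each subsequent level via conformity, and using stability to propagate the vanishing estimator through the induction.

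First, I would dispose of the parenthetical equivalence $\eta_\ell(u_\ell^\inexact)=0 \Longleftrightarrow \MM_\ell=\emptyset$: with $\MM_\ell$ of minimal cardinality satisfying the D\"orfler criterion $\theta\,\eta_\ell(u_\ell^\inexact)\le\eta_\ell(\MM_\ell,u_\ell^\inexact)$, either side being zero forces the other. For the base case, termination of step~{\rm(i)} means the stopping test $\norm{u_\ell^n-u_\ell^{n-1}}\HH\le\lambda\,\eta_\ell(u_\ell^n)$ holds at the final index~$n$, so Lemma~\ref{lemma:reliability} applies and yields $\norm{u^\exact-u_\ell^\inexact}\HH\le\Crel^\inexact\,\eta_\ell(u_\ell^\inexact)=0$. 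Hence $u_\ell^\inexact=u^\exact$ at level $\ell$.

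I would then induct on $k\ge\ell$. In the step from $k$ to $k+1$, the hypothesis $\MM_k=\emptyset$ gives $\TT_{k+1}=\refine(\TT_k,\emptyset)=\TT_k$, so $\XX_{k+1}=\XX_k$, and nested iteration yields $u_{k+1}^0=u_k^\inexact=u^\exact$. The key observation is that conformity $\XX_{k+1}\subseteq\HH$ together with~\eqref{eq:strongform} implies the Galerkin identity $A_{k+1}u^\exact=F_{k+1}$ in $\XX_{k+1}^*$; hence the definition of $\Phi_{k+1}$ immediately gives $u_{k+1}^1=u^\exact=u_{k+1}^0$. The stopping test is therefore trivially satisfied at $n=1$, step~{\rm(i)} terminates after one iteration, and $u_{k+1}^\inexact=u^\exact$.

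To close the induction I still need $\MM_{k+1}=\emptyset$, for which I would apply stability~\eqref{axiom:stability} on the unchanged mesh with $\UU_k:=\TT_k=\TT_k\cap\TT_{k+1}$ and $v_k=v_{k+1}=u^\exact$: this forces $|\eta_{k+1}(u_{k+1}^\inexact)-\eta_k(u_k^\inexact)|\le\Cstab\,\norm{u^\exact-u^\exact}\HH=0$, and the equivalence from the first paragraph then delivers $\MM_{k+1}=\emptyset$. No step presents a real obstacle; the only conceptual point worth emphasizing is the Galerkin identity $A_{k+1}u^\exact=F_{k+1}$, which is precisely what turns $u^\exact$ into a fixed point of every subsequent discrete Picard map and allows the lucky breakdown to persist forever.
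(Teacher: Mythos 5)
Your proof is correct and follows essentially the same route as the paper: equivalence of $\eta_\ell(u_\ell^\inexact)=0$ with $\MM_\ell=\emptyset$, Lemma~\ref{lemma:reliability} to identify $u_\ell^\inexact=u^\exact$, and the observation that $u^\exact$ is a fixed point of $\Phi_{k+1}$ on the unchanged mesh so that the Picard loop stops after one step. The only cosmetic difference is that you invoke stability~\eqref{axiom:stability} to conclude $\eta_{k+1}(u_{k+1}^\inexact)=0$, whereas the paper simply notes that $\TT_{k+1}=\TT_k$ and $u_{k+1}^\inexact=u_k^\inexact$ make the two estimator values identical by definition.
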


\begin{proof}
Clearly, $\MM_\ell = \emptyset$ implies $\eta_\ell(u_\ell^\inexact)=0$ . Conversely, $\eta_\ell(u_\ell^\inexact)=0$ 
also implies $\MM_\ell = \emptyset$.
In this case, Lemma~\ref{lemma:reliability} yields $\norm{u^\exact-u_\ell^\inexact}\HH=0$ and hence $u_\ell^\inexact=u^\exact$.
Moreover, $\MM_\ell = \emptyset$ implies $\TT_{\ell+1} = \TT_\ell$. 
Nested iteration guarantees $u_{\ell+1}^1 = \Phi_{\ell+1} u_{\ell+1}^0 =\Phi_{\ell+1} u_\ell^\inexact =\Phi_{\ell+1} u^\exact = u^\exact = u_\ell^\inexact = u_{\ell+1}^0$ and therefore $\norm{u_{\ell+1}^1 -  u_{\ell+1}^0}{\HH} =0$.
Together with $\TT_{\ell+1}=\TT_\ell$, this implies $u_{\ell+1} = u_\ell$ and $\eta_{\ell+1}(u_{\ell+1})=\eta_\ell(u_\ell)=0$.
Hence, for all $k \geq {\ell+1}$, step~{\rm(i)} terminates after one iteration with $u_k^\inexact = u^\exact$ and $\MM_k =\emptyset$. 
\end{proof}

For the rest of this section, we suppose that step~(i) of Algorithm~\ref{algorithm} terminates with $\eta_\ell(u_\ell)>0$ for all $\ell \in \N_0$.
In this case, we control the number of Picard iterates $\#\pic(\ell)$.

\begin{proposition}\label{proposition:nested_iteration}
Suppose~\eqref{axiom:monotone}--\eqref{axiom:lipschitz} for the nonlinear 
operator $A$ as well as stability~\eqref{axiom:stability} and reliability~\eqref{axiom:reliability} for the {\sl a~posteriori} 
error estimator.
Let $0<\theta\le1$ and $\lambda>0$ be the adaptivity parameters of Algorithm~\ref{algorithm}. 
Suppose that, for all $\ell\in\N_0$, step~{\rm(i)} of Algorithm~\ref{algorithm} terminates after finitely many steps and that $\eta_\ell(u_\ell^\inexact)>0$.
Then, there exists a constant $\Cpic\ge1$ such that for all $\ell\in\N$, 
the number of Picard iterates $\#\pic(\ell)$ satisfies
\begin{align}\label{eq:nested_iteration}
 \#\pic(\ell)\le \Cpic + \frac{1}{|\log q|} \, \log \big(  \max \big\{ 1 \,,\, \eta_{\ell-1}(u_{\ell-1}^\inexact) / \eta_\ell(u_\ell^\inexact) \big\}\big).
\end{align}
The constant $\Cpic$ depends only on~\eqref{axiom:monotone}--\eqref{axiom:lipschitz}, \eqref{axiom:stability}, and \eqref{axiom:reliability}.
\end{proposition}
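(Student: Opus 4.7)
The plan is to track the failure of the stopping criterion at the next-to-last Picard step back through the Picard contraction down to the initial iterate, and then to use nested iteration together with Lemma~\ref{lemma:reliability} applied on level $\ell-1$ to express the result in terms of the previous estimator. Write $n:=\#\pic(\ell)$. The case $n=1$ is trivial and can be absorbed into $\Cpic$, so assume $n\ge2$. By construction of step~(i), the criterion failed at iteration $n-1$, i.e.\
\begin{align*}
 \lambda\,\eta_\ell(u_\ell^{n-1}) < \norm{u_\ell^{n-1}-u_\ell^{n-2}}\HH.
\end{align*}

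Next, I would replace $\eta_\ell(u_\ell^{n-1})$ by $\eta_\ell(u_\ell^n)$ via stability~\eqref{axiom:stability}, i.e.\ $\eta_\ell(u_\ell^{n-1})\ge \eta_\ell(u_\ell^n)-\Cstab\norm{u_\ell^n-u_\ell^{n-1}}\HH$, and then absorb the $\norm{u_\ell^n-u_\ell^{n-1}}\HH$ term by the contraction $\norm{u_\ell^n-u_\ell^{n-1}}\HH\le q\,\norm{u_\ell^{n-1}-u_\ell^{n-2}}\HH$ from~\eqref{eq:picard_contraction}. Iterating the same contraction gives $\norm{u_\ell^{n-1}-u_\ell^{n-2}}\HH\le q^{n-2}\norm{u_\ell^1-u_\ell^0}\HH$. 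Now the nested iteration choice $u_\ell^0=u_{\ell-1}^\inexact$ together with Remark~\ref{remark1}, in particular \eqref{eq:remark_normbounded}, yields $\norm{u_\ell^1-u_\ell^0}\HH\le (\alpha/L)\norm{u^\exact-u_{\ell-1}^\inexact}\HH$. Since $u_{\ell-1}^\inexact$ is the output of step~(i) on level $\ell-1$, it satisfies the hypothesis of Lemma~\ref{lemma:reliability}, which gives $\norm{u^\exact-u_{\ell-1}^\inexact}\HH\le \Crel^\inexact\,\eta_{\ell-1}(u_{\ell-1}^\inexact)$. Chaining these estimates produces a bound of the form
\begin{align*}
 \eta_\ell(u_\ell^n) < C\,q^{n-2}\,\eta_{\ell-1}(u_{\ell-1}^\inexact),
 \qquad
 C := \lambda^{-1}(1+\lambda\Cstab q)(\alpha/L)\,\Crel^\inexact.
\end{align*}

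Taking the logarithm (note $\log q<0$) and rearranging, I obtain $n\le 2 + |\log q|^{-1}\log C + |\log q|^{-1}\log\bigl(\eta_{\ell-1}(u_{\ell-1}^\inexact)/\eta_\ell(u_\ell^\inexact)\bigr)$; replacing the last log by $\log(\max\{1,\cdot\})$ to absorb the sign when $\eta_{\ell-1}\le\eta_\ell$ and defining $\Cpic:=2+|\log q|^{-1}\log\max\{1,C\}$ (enlarged to also cover the trivial $n=1$ case) yields~\eqref{eq:nested_iteration}. The constants entering $C$ depend only on \eqref{axiom:monotone}--\eqref{axiom:lipschitz}, \eqref{axiom:stability}, and \eqref{axiom:reliability} (through $\Crel^\inexact$, $\Cstab$, $\alpha$, $L$, $q$, $\lambda$), as required. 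The only conceptual subtlety, and hence the main thing to verify carefully, is the legitimate application of Lemma~\ref{lemma:reliability} on level $\ell-1$: this requires that step~(i) terminated on that level, which is exactly the standing assumption of the proposition (and, for $\ell=1$, this is simply that $u_0^\inexact$ was produced by step~(i)). Everything else is a deterministic composition of the Picard contraction, stability, and nested iteration.
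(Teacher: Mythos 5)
Your proof is correct, and it takes a genuinely different and cleaner route than the paper's. The paper's Step~1 defines an auxiliary $n$ minimal with $0<\frac{Cd_\ell}{1-C'd_\ell\,q^n}q^{n-1}\le\lambda$ (where $C'=C\Cstab/(1-q)$), proves by contradiction that $\#\pic(\ell)\le n$ using stability between $u_\ell^k$ and $u_\ell^n$ and a geometric series, and then in Step~2 exhibits an explicit $N$ satisfying that technical condition; this forces the authors to control the denominator $1-C'd_\ell\,q^n$ separately. You instead go directly to the last \emph{failed} test $\lambda\eta_\ell(u_\ell^{n-1})<\norm{u_\ell^{n-1}-u_\ell^{n-2}}\HH$, convert $\eta_\ell(u_\ell^{n-1})$ to $\eta_\ell(u_\ell^n)$ via one application of~\eqref{axiom:stability}, bound the extra term by a single step of~\eqref{eq:picard_contraction}, and then contract all the way back to $\norm{u_\ell^1-u_\ell^0}\HH$ so that nested iteration, \eqref{eq:remark_normbounded}, and Lemma~\ref{lemma:reliability} finish with $\eta_\ell(u_\ell^n)<C q^{n-2}\eta_{\ell-1}(u_{\ell-1}^\inexact)$; the log is then elementary. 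This avoids the paper's contradiction-plus-feasibility scaffolding entirely (no denominator $1-C'd_\ell\,q^n$ to manage), at the price of a slightly different constant $C=\lambda^{-1}(1+\lambda\Cstab q)(\alpha/L)\Crel^\inexact$, which is fine since only the dependency structure matters. Your handling of the edge cases is also sound: $n=1$ is absorbed because $\Cpic\ge2$, and the passage from $\log(\eta_{\ell-1}/\eta_\ell)$ to $\log\max\{1,\eta_{\ell-1}/\eta_\ell\}$ and from $\log C$ to $\log\max\{1,C\}$ only weakens the bound. The one dependency you should note explicitly is that $\Cpic$ depends on $\lambda$ (both through the leading $\lambda^{-1}$ and through $\Crel^\inexact$); the same is true in the paper's proof, even though the proposition statement lists only the axioms.
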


The proof of Proposition~\ref{proposition:nested_iteration} employs the following auxiliary result.

\begin{lemma}\label{lemma:eta:picard}
Suppose~\eqref{axiom:monotone}--\eqref{axiom:lipschitz} for the 
operator $A$ as well as stability~\eqref{axiom:stability} and reliability~\eqref{axiom:reliability} for the 
error estimator.
Suppose that, for all $\ell\in\N_0$, step~{\rm(i)} of Algorithm~\ref{algorithm} terminates after finitely many steps.
Then, for $\ell\in\N$, the discrete Picard iterates satisfy
\begin{align}\label{eq:important}
 \norm{u_\ell^n-u_\ell^{n-1}}\HH
 \le q^{n-1}\,\frac{\alpha}{L}\,\Crel^\inexact\,\eta_{\ell-1}(u_{\ell-1}^\inexact)
 \quad\text{for all }n\in\N.
\end{align}
Moreover, 
it holds that
\begin{align}\label{eq:eta:picard}
 \eta_\ell(u_{\ell}^n) \le \frac{\alpha}{L}\,\frac{\Crel^\inexact}{\lambda} \,q^{n-1}\,\eta_{\ell-1}(u_{\ell-1}^\inexact)
 \quad\text{for all }n=1,\dots,\#\pic(\ell)-1.
\end{align}
\end{lemma}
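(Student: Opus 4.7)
The plan is to prove \eqref{eq:important} first and then derive \eqref{eq:eta:picard} as a direct consequence of the failure of the termination criterion.

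For \eqref{eq:important}, the starting point will be the nested initialization $u_\ell^0 := u_{\ell-1}^\inexact$ from step~(iii) of Algorithm~\ref{algorithm}. The key idea is to apply the bound \eqref{eq:remark_normbounded} from Remark~\ref{remark1} to the discrete Picard step $u_\ell^1=\Phi_\ell u_\ell^0$, which yields
\begin{align*}
 \norm{u_\ell^1 - u_\ell^0}\HH \le \frac{\alpha}{L}\,\norm{u^\exact - u_\ell^0}\HH = \frac{\alpha}{L}\,\norm{u^\exact - u_{\ell-1}^\inexact}\HH.
\end{align*}
Since step~(i) in iteration $\ell-1$ has terminated, the final iterate $u_{\ell-1}^\inexact = u_{\ell-1}^{\#\pic(\ell-1)}$ satisfies the stopping criterion $\norm{u_{\ell-1}^{n}-u_{\ell-1}^{n-1}}\HH\le\lambda\,\eta_{\ell-1}(u_{\ell-1}^{n})$ with $n=\#\pic(\ell-1)$; hence Lemma~\ref{lemma:reliability} applies and gives $\norm{u^\exact - u_{\ell-1}^\inexact}\HH \le \Crel^\inexact\,\eta_{\ell-1}(u_{\ell-1}^\inexact)$. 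Combining these two estimates settles the case $n=1$. For $n\ge 2$, I will iterate the contraction property of the discrete Picard operator $\Phi_\ell$ from~\eqref{eq:picard_contraction} (applied in the discrete setting; see Section~\ref{section:discreteproblem}), i.e.,
\begin{align*}
 \norm{u_\ell^n - u_\ell^{n-1}}\HH = \norm{\Phi_\ell u_\ell^{n-1} - \Phi_\ell u_\ell^{n-2}}\HH \le q\,\norm{u_\ell^{n-1} - u_\ell^{n-2}}\HH,
\end{align*}
so that $\norm{u_\ell^n-u_\ell^{n-1}}\HH \le q^{n-1}\,\norm{u_\ell^1-u_\ell^0}\HH$. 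Chaining with the $n=1$ estimate proves \eqref{eq:important}.

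For \eqref{eq:eta:picard}, I will use that, by definition of $\#\pic(\ell)$, the stopping criterion in step~(i) fails for every $n = 1, \dots, \#\pic(\ell)-1$, i.e., $\norm{u_\ell^n - u_\ell^{n-1}}\HH > \lambda\,\eta_\ell(u_\ell^n)$. Rearranging and inserting the already-proved bound \eqref{eq:important} gives
\begin{align*}
 \eta_\ell(u_\ell^n) < \lambda^{-1}\,\norm{u_\ell^n-u_\ell^{n-1}}\HH \le \lambda^{-1}\,q^{n-1}\,\frac{\alpha}{L}\,\Crel^\inexact\,\eta_{\ell-1}(u_{\ell-1}^\inexact),
\end{align*}
which is exactly \eqref{eq:eta:picard}.

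The only nontrivial step is applying Lemma~\ref{lemma:reliability} on level $\ell-1$, which requires explicitly invoking that step~(i) has terminated with a Picard iterate respecting the $\lambda$-criterion; everything else is bookkeeping combining nested iteration, Remark~\ref{remark1}, and the Picard contraction. I expect no further obstacle.
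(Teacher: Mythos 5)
Your proof is correct and follows essentially the same route as the paper's: nested initialization plus~\eqref{eq:remark_normbounded}, then Lemma~\ref{lemma:reliability} on level $\ell-1$ and the discrete Picard contraction to get~\eqref{eq:important}, and finally the failed stopping criterion for $n<\#\pic(\ell)$ to get~\eqref{eq:eta:picard}. The only difference is cosmetic: you spell out explicitly why Lemma~\ref{lemma:reliability} is applicable on the previous level (the termination criterion held there), which the paper invokes tacitly via~\eqref{eq:reliability}.
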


\begin{proof}
Recall that $u_\ell^0 = u_{\ell-1}^\inexact$ and
\begin{align*}
\norm{u_{\ell}^1 - u_{\ell}^0}{\HH} \stackrel{\eqref{eq:remark_normbounded}}{\le}  
\frac{\alpha}{L} \,\norm{u^\exact-u_\ell^0}{\HH}
= \frac{\alpha}{L} \,\norm{u^\exact-u_{\ell-1}^\inexact}{\HH}.
\end{align*}
With the contraction~\eqref{eq:picard_contraction} for the Picard iterates and reliability~\eqref{eq:reliability}, this yields that
\begin{align*}
 \norm{u_\ell^n-u_\ell^{n-1}}\HH
 \reff{eq:picard_contraction}\le q^{n-1}\,\norm{u_\ell^1-u_\ell^0}\HH
 \le q^{n-1}\,\frac{\alpha}{L}\,\norm{u^\exact-u_{\ell-1}^\inexact}\HH
 \reff{eq:reliability}\le q^{n-1}\,\frac{\alpha}{L}\,\Crel^\inexact\,\eta_{\ell-1}(u_{\ell-1}^\inexact).
\end{align*}
This proves~\eqref{eq:important}. To see~\eqref{eq:eta:picard}, note that Algorithm~\ref{algorithm} ensures that $k:=\#\pic(\ell)$ is the minimal integer with $\norm{u_\ell^k-u_\ell^{k-1}}\HH \le \lambda\,\eta_\ell(u_\ell^k)$. For $n=1,\dots,k-1$, it thus holds that
\begin{align*}
 \lambda\,\eta_\ell(u_\ell^n) < \norm{u_\ell^n-u_\ell^{n-1}}\HH
 \reff{eq:important}\le \frac{\alpha}{L}\,\Crel^\inexact\,q^{n-1}\,\eta_{\ell-1}(u_{\ell-1}^\inexact).
\end{align*}
This concludes the proof.
\end{proof}

\begin{proof}[Proof of Proposition~\ref{proposition:nested_iteration}]
We prove the assertion in two steps.

{\bf Step~1.}\quad	Let $d_\ell:= \eta_{\ell-1}(u_{\ell-1}^\inexact) / \eta_{\ell}(u_{\ell}^\inexact)$. Choose $n \in \N$ minimal such that 
\begin{align}\label{eq:proposition_nbound}
0 < \frac{C d_\ell}{1-C' d_\ell\,q^n}\,q^{n-1} \le \lambda,
\quad\text{where}\quad
C:= \frac{\alpha}{L}\,\Crel^\inexact
\quad\text{and}\quad
C':= C\Cstab\,\frac{1}{1-q}.
\end{align}
In this step, we aim to show that $k:=\#\pic(\ell)\le n$. 
To this end, we argue by contradiction and assume $k>n$. First, recall that step~(i) of Algorithm~\ref{algorithm} guarantees that $k\in\N$ is the minimal index such that $\norm{u_\ell^k-u_\ell^{k-1}}\HH \le \lambda \eta_\ell(u_\ell^k)$. Second, note that~\eqref{eq:important} yields that
\begin{align*}
\norm{u_\ell^n-u_\ell^{n-1}}\HH
 \reff{eq:important}\le q^{n-1}\,\frac{\alpha}{L}\,\Crel^\inexact\,\eta_{\ell-1}(u_{\ell-1}^\inexact)
 = q^{n-1}\,\frac{\alpha}{L}\,\Crel^\inexact\, d_\ell \,\eta_{\ell}(u_\ell^\inexact)
 = C d_\ell q^{n-1}\,\eta_{\ell}(u_\ell^\inexact). 
\end{align*}
Since $k>n$, stability~\eqref{axiom:stability} and the geometric series prove that
\begin{align*}
 &\eta_{\ell}(u_\ell^\inexact) 
 = \eta_\ell(u_\ell^k)
 \reff{axiom:stability}\le \eta_\ell(u_\ell^n) + \Cstab\,\norm{u_\ell^k-u_\ell^n}\HH
 \le \eta_\ell(u_\ell^n) + \Cstab\,\sum_{j=n}^{k-1}\norm{u_\ell^{j+1}-u_\ell^{j}}\HH
 \\&\qquad
 \reff{eq:picard_contraction}\le \eta_\ell(u_\ell^n) + \Cstab\,\norm{u_\ell^{n}-u_\ell^{n-1}}\HH\,\sum_{j=n}^{k-1}q^{(j+1)-n}
 \le \eta_\ell(u_\ell^n) + \Cstab\,\frac{q}{1-q}\,\norm{u_\ell^n-u_\ell^{n-1}}\HH.
\end{align*}
Combining the last two estimates, we derive that
\begin{align*}
 \norm{u_\ell^n-u_\ell^{n-1}}\HH
 \le C d_\ell q^{n-1} \,\eta_\ell(u_\ell^n) +  
 C' d_\ell \,q^n\,\norm{u_\ell^n-u_\ell^{n-1}}\HH.
\end{align*}
Rearranging the terms in combination with~\eqref{eq:proposition_nbound}, we obtain that
\begin{align*}
\norm{u_\ell^n-u_\ell^{n-1}}\HH
 \le \frac{C d_\ell}{1-C' d_\ell \,q^n}\,q^{n-1} \,\eta_\ell(u_\ell^n)
 \le \lambda \eta_\ell(u_\ell^n).
\end{align*}
This contradicts the minimality of $k$ and concludes $k \leq n$.

{\bf Step~2.}\quad With $C,C'>0$ from~\eqref{eq:proposition_nbound}, define
\begin{align}
\begin{split}\label{eq:proposition_Ndefine}
 \Cpic &:= \frac{1}{|\log q|}\,\max\Big\{\Big|\log\frac{1}{2C'}\Big|\,,\,\Big|\log\frac{\lambda q}{2C}\Big|\Big\}+1,\\
 N&:= \Big\lceil\Cpic-1+\frac{1}{|\log q|}\,\log\big(\max\big\{1,d_\ell\big\}\big)\Big\rceil.
\end{split}
\end{align}
In this step, we will prove that equation~\eqref{eq:proposition_nbound} is satisfied with $N$ instead of $n$. According to step~1, this will lead to
\begin{align*}
 k = \#\pic(\ell)\le n\le N \le \Cpic + \frac{1}{|\log q|}\,\log\big(\max\big\{1,d_\ell\}\big)
\end{align*}
and hence conclude~\eqref{eq:nested_iteration}. To this end, first note that $$\log\big(\max\big\{1,d_\ell\}\big) = \big|\log\big(1 / \max\{1,d_\ell\}\big)\big|.$$ Therefore,
basic calculus in combination with $N\ge\frac{1}{|\log q|}\,\Big|\log\frac{1}{2C'} + \log\frac{1}{\max\{1,d_\ell\}}\Big|$ reveals that
\begin{align*}
	C'\max\{1,d_\ell\}  \,q^N  \leq C'\max\{1,d_\ell\} \, \frac{1}{2 C' \max \{1,d_\ell \} }  = \frac{1}{2}
\end{align*}
and proves, in particular, that
\begin{align*}
0 < \frac{C d_\ell}{1- C'd_\ell  \,q^N}\,q^{N-1}.
\end{align*}%
Together with $N \ge \frac{1}{|\log q|}\,\Big|\log\frac{\lambda q}{2C} + \log\big(1/\max\{1,d_\ell\}\big)\Big|$, we finally get
\begin{align*}
 0 < \frac{C d_\ell}{1- C'd_\ell  \,q^N}\,q^{N-1} \leq 
	\frac{C \max\{1,d_\ell\} }{1- C'\max\{1,d_\ell\}  \,q^N}\,q^{N-1}
   &\le \frac{2 C \max\{1,d_\ell\}}{q}\,q^{N}
   \\&\le \frac{2 C \max\{1,d_\ell\}}{q}\,\frac{\lambda q}{2 C \max\{1,d_\ell\}}
   = \lambda,
\end{align*}
i.e., we conclude that~\eqref{eq:proposition_nbound} holds for $n=N$.
\end{proof}

\begin{remark}\label{remarkXXX}
Linear convergence (see Theorem~\ref{theorem:linear_convergence} below) proves, in particular, that $\eta_{\ell}(u_{\ell}^\inexact) \le \Clin\qlin\,\eta_{\ell-1}(u_{\ell-1}^\inexact)$ for all $\ell\in\N$. In practice, we observe that $\eta_{\ell}(u_{\ell}^\inexact)$ is only improved by a fixed factor, i.e., there also holds the converse estimate $\eta_{\ell-1}(u_{\ell-1}^\inexact) \le \widetilde C\,\eta_{\ell}(u_{\ell}^\inexact)$ for all $\ell\in\N$. Even though, we cannot thoroughly prove this fact,
Proposition~\ref{proposition:nested_iteration} shows that ---up to such an assumption--- the number of Picard iterations in each step of Algorithm~\ref{algorithm} is, in fact, uniformly bounded with $\#\pic(\ell)\le \Cpic+\frac{1}{|\log q|}\,\log(\max\{1,\widetilde C\})$.
\end{remark}

\subsection{Estimator convergence}
In this section, we show that, if step~(i) of Algorithm~\ref{algorithm} terminates for all $\ell \in \N_0$, 
then Algorithm~\ref{algorithm} yields 
$\eta_\ell(u_\ell^\inexact)\to0$ as $\ell\to\infty$.

We first show that
the iterates $u_\plus^\inexact = u_\plus^n$ of Algorithm~\ref{algorithm} are close to the non-computable 
exact
Galerkin approximation $u_\plus^\exact\in\XX_\plus$ to~\eqref{eq:discrete}
and that the corresponding error estimators are equivalent.

\begin{lemma}\label{lemma1}
Suppose~\eqref{axiom:monotone}--\eqref{axiom:lipschitz} for the 
operator $A$ and 
stability~\eqref{axiom:stability} for the 
error estimator. With $C_\lambda:=\Cstab\,\frac{q}{1-q}$, the following holds for all $0<\lambda<C_\lambda^{-1}$: For all 
$u_\plus^0\in\XX_\plus$ 
and all Picard iterates $u_\plus^n = \Phi_\plus(u_\plus^{n-1})$, $n\in\N$, with $\norm{u_\plus^n-u_\plus^{n-1}}\HH \le \lambda\,\eta_\plus(u_\plus^n)$, it holds that
\begin{align}\label{eq1:lemma1}
 \norm{u_\plus^\exact-u_\plus^n}\HH \le \lambda\,\frac{q}{1-q}\,\min\Big\{\eta_\plus(u_\plus^n)\,,\,\frac{1}{1-\lambda C_\lambda}\,\eta_\plus(u_\plus^\exact)\Big\}.
\end{align}
Moreover, there holds equivalence
\begin{align}\label{eq2:lemma1}
 (1-\lambda C_\lambda)\,\eta_\plus(u_\plus^n) \le \eta_\plus(u_\plus^\exact) \le (1+\lambda C_\lambda)\,\eta_\plus(u_\plus^n).
\end{align}
\end{lemma}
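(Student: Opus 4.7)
The plan is to chain the {\sl a~posteriori} estimate~\eqref{eq:estimate:discrete} for the discrete Picard iteration with stability~\eqref{axiom:stability}, applied with $\UU_\plus=\TT_\plus=\TT_\plus\cap\TT_\plus$ (so that every element is non-refined, yielding the global Lipschitz bound $|\eta_\plus(v_\plus)-\eta_\plus(w_\plus)|\le \Cstab\,\norm{v_\plus-w_\plus}\HH$ on $\XX_\plus$). The one slightly delicate point will be an absorption step, which is precisely where the constraint $\lambda<C_\lambda^{-1}$ enters.

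First, I would combine~\eqref{eq:estimate:discrete} with the termination hypothesis $\norm{u_\plus^n-u_\plus^{n-1}}\HH\le\lambda\,\eta_\plus(u_\plus^n)$ to obtain immediately
\begin{align*}
 \norm{u_\plus^\exact-u_\plus^n}\HH \le \frac{q}{1-q}\,\norm{u_\plus^n-u_\plus^{n-1}}\HH \le \lambda\,\frac{q}{1-q}\,\eta_\plus(u_\plus^n),
\end{align*}
which is the first entry of the minimum in~\eqref{eq1:lemma1}.

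Next, to turn $\eta_\plus(u_\plus^n)$ into $\eta_\plus(u_\plus^\exact)$, I would insert the stability bound $\eta_\plus(u_\plus^n)\le\eta_\plus(u_\plus^\exact)+\Cstab\,\norm{u_\plus^n-u_\plus^\exact}\HH$ into the previous estimate. Recalling $C_\lambda=\Cstab\,q/(1-q)$, this yields
\begin{align*}
 \norm{u_\plus^\exact-u_\plus^n}\HH \le \lambda\,\frac{q}{1-q}\,\eta_\plus(u_\plus^\exact) + \lambda\,C_\lambda\,\norm{u_\plus^\exact-u_\plus^n}\HH.
\end{align*}
Since $\lambda C_\lambda<1$, the last term can be absorbed on the left, producing the second entry of the minimum with the prefactor $1/(1-\lambda C_\lambda)$. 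This absorption is the only step in the proof requiring care, and it is sharp in the sense that the stated bound on $\lambda$ is exactly what the approach needs.

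Finally, the equivalence~\eqref{eq2:lemma1} follows by two further applications of stability combined with the first entry of the minimum already proved. For the upper bound, $\eta_\plus(u_\plus^\exact)\le\eta_\plus(u_\plus^n)+\Cstab\,\norm{u_\plus^\exact-u_\plus^n}\HH\le(1+\lambda C_\lambda)\,\eta_\plus(u_\plus^n)$. The lower bound is symmetric: from $\eta_\plus(u_\plus^n)\le\eta_\plus(u_\plus^\exact)+\Cstab\,\norm{u_\plus^\exact-u_\plus^n}\HH\le\eta_\plus(u_\plus^\exact)+\lambda C_\lambda\,\eta_\plus(u_\plus^n)$, rearranging delivers $(1-\lambda C_\lambda)\,\eta_\plus(u_\plus^n)\le\eta_\plus(u_\plus^\exact)$. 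No additional hypothesis is needed, and there is no genuine obstacle beyond the absorption already performed.
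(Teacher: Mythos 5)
Your proof is correct and follows essentially the same route as the paper: combine the discrete Picard a~posteriori estimate~\eqref{eq:estimate:discrete} with the termination criterion, apply stability~\eqref{axiom:stability} with $\TT_\plus=\TT_\circ$ to switch between $\eta_\plus(u_\plus^n)$ and $\eta_\plus(u_\plus^\exact)$, and absorb using $\lambda C_\lambda<1$. The only cosmetic difference is in the lower bound of~\eqref{eq2:lemma1}, where you rearrange directly from the first entry of the minimum while the paper inserts the already-absorbed second entry; both yield the same inequality.
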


\begin{proof}
For $\TT_\plus=\TT_\circ$, stability~\eqref{axiom:stability} guarantees $|\eta_\plus(u_\plus^\exact)-\eta_\plus(u_\plus^n)|\le \Cstab\,\norm{u_\plus^\exact-u_\plus^n}\HH$. Therefore, estimate~\eqref{eq:estimate:discrete} and the assumption on the Picard iterate $u_\plus^n$ imply that
\begin{align*}
 \norm{u_\plus^\exact-u_\plus^n}\HH
 \reff{eq:estimate:discrete}\le \frac{q}{1\!-\!q}\,\norm{u_\plus^n-u_\plus^{n-1}}\HH
 &\le\lambda\,\frac{q}{1\!-\!q}\,\eta_\plus(u_\plus^n)
 \reff{axiom:stability}\le\lambda\,\frac{q}{1\!-\!q}\,\big(\eta_\plus(u_\plus^\exact) + \Cstab\,\norm{u_\plus^\exact-u_\plus^n}\HH\big).
\end{align*}
Since $0<\lambda<C_\lambda^{-1}$ and hence $\lambda\Cstab\,\frac{q}{1-q} = \lambda C_\lambda<1$, this yields that
\begin{align*}
 \norm{u_\plus^\exact-u_\plus^n}\HH \le\frac{ \lambda\,\frac{q}{1-q} }{ 1 - \lambda\Cstab\,\frac{q}{1-q} }\,\eta_\plus(u_\plus^\exact)
 = \lambda\,\frac{q}{1-q}\,\frac{1}{1-\lambda C_\lambda}\,\eta_\plus(u_\plus^\exact).
\end{align*}
Altogether, this proves~\eqref{eq1:lemma1}. Moreover, we see
\begin{align*}
 \eta_\plus(u_\plus^\exact)
 \reff{axiom:stability}\le \eta_\plus(u_\plus^n) + \Cstab\,\norm{u_\plus^\exact-u_\plus^n}\HH
 \reff{eq1:lemma1}
 \le (1+\lambda C_\lambda)\,\eta_\plus(u_\plus^n)
\end{align*}
as well as 
\begin{align*}
 \eta_\plus(u_\plus^n)
 \reff{axiom:stability}\le \eta_\plus(u_\plus^\exact) + \Cstab\,\norm{u_\plus^\exact-u_\plus^n}\HH
 \reff{eq1:lemma1}
 \le \Big(1+ \,\frac{\lambda C_\lambda}{1-\lambda C_\lambda}\Big)\,\eta_\plus(u_\plus^\exact)
 = \frac{1}{1-\lambda C_\lambda}\,\eta_\plus(u_\plus^\exact).
\end{align*}
This concludes the proof.
\end{proof}

The following proposition gives a first convergence result for Algorithm~\ref{algorithm}. Unlike the stronger convergence result of Theorem~\ref{theorem:linear_convergence} below, 
plain convergence only relies on~\eqref{axiom:monotone}--\eqref{axiom:lipschitz}, but avoids the use of~\eqref{axiom:potential}.

\begin{proposition}\label{prop:convergence}
Suppose~\eqref{axiom:monotone}--\eqref{axiom:lipschitz} for the 
operator $A$ and~\eqref{axiom:stability}--\eqref{axiom:reduction} for the 
error estimator. With $C_\lambda$ from Lemma~\ref{lemma1}, let $0<\theta\le1$ and $0<\lambda< C_\lambda^{-1} \theta $.
Then, Algorithm~\ref{algorithm} guarantees the existence of constants $0<\qest<1$ and $\Cest>0$
which depend only on~\eqref{axiom:stability}--\eqref{axiom:reduction} as well as $\lambda$ and $\theta$, such that the following implication holds: If the repeat loop of step~{\rm(i)} of Algorithm~\ref{algorithm} terminates after finitely many steps for all $\ell\in\N_0$, then 
\begin{align}\label{eq:reduction}
 \eta_{\ell+1}(u_{\ell+1}^\exact)^2 \le \qest\,\eta_\ell(u_\ell^\exact)^2 + \Cest\,\norm{u_{\ell+1}^\exact-u_\ell^\exact}\HH^2
 \quad\text{for all }\ell\in\N_0,
\end{align}
where $u_\plus^\exact\in\XX_\plus$ in the (non-computable) Galerkin solution to~\eqref{eq:discrete}.
Moreover, there holds estimator convergence $\eta_\ell(u_\ell^\inexact)\to0$ as $\ell\to\infty$.
\end{proposition}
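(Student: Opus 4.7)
The plan is to first establish the perturbed contraction~\eqref{eq:reduction} for the Galerkin iterates $u_\ell^\exact$ by the by-now standard estimator reduction argument in the spirit of~\cite{ckns,axioms}, and then deduce $\eta_\ell(u_\ell^\inexact)\to0$ in two stages: convergence of the Galerkin sequence $u_\ell^\exact\to u_\infty^\exact$ combined with the contraction will give $\eta_\ell(u_\ell^\exact)\to0$, and the estimator equivalence~\eqref{eq2:lemma1} from Lemma~\ref{lemma1} will transfer the decay to the computed iterates.

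To obtain~\eqref{eq:reduction}, I would split $\eta_{\ell+1}(u_{\ell+1}^\exact)^2 = \eta_{\ell+1}(\TT_\ell\cap\TT_{\ell+1},u_{\ell+1}^\exact)^2 + \eta_{\ell+1}(\TT_{\ell+1}\setminus\TT_\ell,u_{\ell+1}^\exact)^2$, apply stability~\eqref{axiom:stability} with Young's inequality to the first summand, and reduction~\eqref{axiom:reduction} to the second. For any $\delta>0$, this yields
\[
\eta_{\ell+1}(u_{\ell+1}^\exact)^2 \le (1+\delta)\,\eta_\ell(u_\ell^\exact)^2 - (1+\delta-\qred)\,\eta_\ell(\TT_\ell\setminus\TT_{\ell+1},u_\ell^\exact)^2 + C_\delta\,\norm{u_{\ell+1}^\exact-u_\ell^\exact}\HH^2.
\]
The critical step is then to promote the D\"orfler criterion from the computed iterate $u_\ell^\inexact$ to the exact Galerkin solution $u_\ell^\exact$. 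Exploiting $\MM_\ell \subseteq \TT_\ell\setminus\TT_{\ell+1}$, the marking bound $\theta\,\eta_\ell(u_\ell^\inexact) \le \eta_\ell(\MM_\ell,u_\ell^\inexact)$, the local stability estimate $|\eta_\ell(\MM_\ell,u_\ell^\inexact)-\eta_\ell(\MM_\ell,u_\ell^\exact)| \le \Cstab\,\norm{u_\ell^\inexact-u_\ell^\exact}\HH$, and the bound $\norm{u_\ell^\inexact-u_\ell^\exact}\HH \le \lambda\,\frac{q}{1-q}\,\eta_\ell(u_\ell^\inexact)$ from~\eqref{eq1:lemma1} together produce $(\theta-\lambda C_\lambda)\,\eta_\ell(u_\ell^\inexact) \le \eta_\ell(\TT_\ell\setminus\TT_{\ell+1},u_\ell^\exact)$. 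The equivalence~\eqref{eq2:lemma1} and the hypothesis $\lambda C_\lambda<\theta$ then yield a D\"orfler-type estimate for $u_\ell^\exact$ with the strictly positive parameter $\theta':=(\theta-\lambda C_\lambda)/(1+\lambda C_\lambda)$. Inserting $(\theta')^2\,\eta_\ell(u_\ell^\exact)^2 \le \eta_\ell(\TT_\ell\setminus\TT_{\ell+1},u_\ell^\exact)^2$ into the displayed inequality and fixing $\delta>0$ small enough that $\qest := 1+\delta-(1+\delta-\qred)(\theta')^2$ lies in $(0,1)$ concludes~\eqref{eq:reduction} with $\Cest := C_\delta$.

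For the estimator convergence, I would introduce the closed subspace $\XX_\infty := \overline{\bigcup_\ell \XX_\ell}\subseteq\HH$, on which $A$ still satisfies~\eqref{axiom:monotone}--\eqref{axiom:lipschitz}, so that the associated Galerkin problem admits a unique solution $u_\infty^\exact\in\XX_\infty$. Applying Lemma~\ref{lemma:cea} on $\XX_\infty$ with $\XX_\plus=\XX_\ell$ together with the density of $\bigcup_\ell\XX_\ell$ in $\XX_\infty$ gives $u_\ell^\exact\to u_\infty^\exact$ in $\HH$, and in particular $\norm{u_{\ell+1}^\exact-u_\ell^\exact}\HH\to0$. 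The standard perturbation lemma for contractive recursions (if $a_{\ell+1}\le\qest\,a_\ell+b_\ell$ with $\qest<1$ and $b_\ell\to0$, then $a_\ell\to 0$) applied to~\eqref{eq:reduction} now forces $\eta_\ell(u_\ell^\exact)\to0$. Finally, the equivalence~\eqref{eq2:lemma1} transfers this to $\eta_\ell(u_\ell^\inexact)\to0$.

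The main obstacle is the clean transfer of the D\"orfler property from the computable iterate $u_\ell^\inexact$ to the unavailable Galerkin solution $u_\ell^\exact$; this is precisely what forces the smallness condition $\lambda<C_\lambda^{-1}\theta$ in the hypothesis, since it is exactly what guarantees $\theta'>0$. The remainder is routine CKNS-type bookkeeping with Young's inequality and the perturbation lemma.
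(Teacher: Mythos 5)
Your proof is correct and follows essentially the same route as the paper: the same promotion of the D\"orfler property from $u_\ell^\inexact$ to $u_\ell^\exact$ (with the identical reduced parameter $\theta'=(\theta-\lambda C_\lambda)/(1+\lambda C_\lambda)$, using \eqref{eq1:lemma1}, stability, and \eqref{eq2:lemma1}), the same \emph{a~priori} convergence $u_\ell^\exact\to u_\infty^\exact$ via the C\'ea lemma on $\XX_\infty=\overline{\bigcup_\ell\XX_\ell}$, and the same perturbation argument for contractive recursions. The only cosmetic difference is that you spell out the stability--reduction--Young bookkeeping that establishes \eqref{eq:reduction} from the D\"orfler property, whereas the paper simply invokes \cite[Lemma~4.7]{axioms} for that step.
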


\begin{proof}
We prove the assertion in three steps.

{\bf Step~1.}\quad
Arguing as in the proof of Lemma~\ref{lemma1}, stability~\eqref{axiom:stability} proves that
\begin{align*}
 \eta_\ell(\MM_\ell,u_\ell^\inexact) 
 \reff{axiom:stability}\le \eta_\ell(\MM_\ell,u_\ell^\exact) + \Cstab\,\norm{u_\ell^\exact-u_\ell^\inexact}\HH
 \reff{eq1:lemma1}\le \eta_\ell(\MM_\ell,u_\ell^\exact) + \lambda C_\lambda \,\eta_\ell(u_\ell^\inexact),
\end{align*}
where we have used that $C_\lambda = \Cstab\,\frac{q}{1-q}$. Together with the D\"orfler marking strategy in step~(ii) of Algorithm~\ref{algorithm}, this proves that

\begin{align*}
 \theta'\,\eta_\ell(u_\ell^\exact) := 
 \frac{\theta- \lambda C_\lambda}{1+\lambda C_\lambda}\,\eta_\ell(u_\ell^\exact)
 \reff{eq2:lemma1}\le (\theta- \lambda C_\lambda) \,\eta_\ell(u_\ell^\inexact)
 \stackrel{\rm(ii)}\le \eta_\ell(\MM_\ell,u_\ell^\inexact) - \lambda C_\lambda \, \eta_\ell(u_\ell^\inexact)
 \le\eta_\ell(\MM_\ell,u_\ell^\exact).
\end{align*}
Note that $\lambda < C_\lambda^{-1} \theta$ implies $\theta'>0$.
Hence, this is  the D\"orfler marking for $u_\ell^\exact$ with parameter $0<\theta'<\theta$.
Therefore,~\cite[Lemma~4.7]{axioms} proves~\eqref{eq:reduction}.

{\bf Step~2.}\quad
Next, we adopt an argument from~\cite{bv,msv} to prove {\sl a priori} convergence of the sequence $(u_\ell^\exact)_{\ell \in \N_0}$:
Since the discrete subspaces are nested, $\XX_\infty:=\overline{\bigcup_{\ell=0}^\infty\XX_\ell}$ is a closed subspace of $\HH$ and hence a Hilbert space. Arguing as above (for the continuous and discrete problem), there exists a unique solution $u_\infty^\exact\in\XX_\infty$ of
\begin{align*}
 \dual{Au_\infty^\exact}{v_\infty} = \dual{F}{v_\infty} 
 \quad\text{for all }v_\infty\in\XX_\infty.
\end{align*}
Note that $\XX_\ell\subseteq\XX_\infty$ implies that $u_\ell^\exact$ is a Galerkin approximation to $u_\infty^\exact$. Hence, the C\'ea lemma (Lemma~\ref{lemma:cea}) is valid with $u^\exact\in\HH$ replaced by $u_\infty^\exact \in\XX_\infty$. Together with the definition of $\XX_\infty$, this proves that
\begin{align*}
 \norm{u_\infty^\exact-u_\ell^\exact}\HH \le \frac{L}{\alpha}\,\min_{w_\ell\in\XX_\ell}\norm{u_\infty^\exact-w_\ell}\HH
 \xrightarrow{\ell\to\infty}0.
\end{align*}
In particular, we infer that $\norm{u_{\ell+1}^\exact-u_\ell^\exact}\HH^2\to0$ as $\ell\to\infty$.

{\bf Step~3.}\quad
According to~\eqref{eq:reduction} and step~2, the sequence $\big(\eta_\ell(u_\ell^\exact)\big)_{\ell\in\N_0}$ is contractive up to a non-negative perturbation which tends to zero. Basic calculus (e.g.,~\cite[Lemma~2.3]{estconv}) proves $\eta_\ell(u_\ell^\exact)\to0$ as $\ell\to\infty$. Lemma~\ref{lemma1} guarantees the equivalence 
 $\eta_\ell(u_\ell^\exact)\simeq\eta_\ell(u_\ell^\inexact)$. This concludes the proof.
\end{proof}

\begin{remark}
As in Proposition~\ref{prop:convergence}, the linear convergence result of Theorem~\ref{theorem:linear_convergence} below allows arbitrary $0<\theta\le1$, but requires $0<\lambda<C_\lambda^{-1}\,\theta$ with $C_\lambda>0$ being the constant from Lemma~\ref{lemma1}. In many situations, the weaker constraint $0<\lambda<C_\lambda^{-1}$ which avoids any coupling of $\theta$ and $\lambda$, appears to be sufficient to guarantee \emph{plain} convergence. To see this, note that usually the error estimator is equivalent to error plus data oscillations
\begin{align*}
 \eta_\ell(u_\ell^\exact) \simeq \norm{u^\exact-u_\ell^\exact}\HH + \osc_\ell(u_\ell^\exact).
\end{align*}
If the ``discrete limit space'' $\XX_\infty := \overline{\bigcup_{\ell=0}^\infty\XX_\ell}$ satisfies $\XX_\infty = \HH$, possible  smoothness of $A$ guarantees $\norm{u^\exact-u_\ell^\exact}\HH + \osc_\ell(u_\ell^\exact)\to0$ as $\ell\to\infty$; see the argumentation in~\cite[Proof of Theorem~3]{afvm}. Moreover, $\XX_\infty = \HH$ follows either implicitly  if $u^\exact$ is ``nowhere discrete'', or can explicitly be ensured by the marking strategy without deteriorating optimal convergence rates; see~\cite[Section~3.2]{helmholtz}. Since $0<\lambda<C_\lambda^{-1}$, Lemma~\ref{lemma1} guarantees estimator equivalence $\eta_\ell(u_\ell^\inexact) \simeq \eta_\ell(u_\ell^\exact)$. Overall, such a situation leads to $\eta_\ell(u_\ell^\inexact)\to0$ as $\ell\to\infty$.
\end{remark}

\section{Linear convergence}
\label{section:linear}

Suppose that $A$ additionally satisfies \eqref{axiom:potential}.
For $v \in \HH$, we define the energy $Ev:=\Re(P-F)v$, where $P$ is the potential associated with $A$ from~\eqref{eq:gateaux} and $F\in\HH^*$ is the right-hand side of~\eqref{eq:strongform}.
The next lemma generalizes~\cite[Lemma~16]{dk08} and~\cite[Theorem 4.1]{gmz} and states equivalence of the energy difference and the difference in norm. 

\begin{lemma}\label{lemma:energy eq}
Suppose~\eqref{axiom:monotone}--\eqref{axiom:potential}. Let $\XX_\plus$ be a closed subspace of $\HH$ (which also allows $\XX_\plus=\HH$).
If $u_\plus^\exact \in \XX_\plus$ denotes the corresponding Galerkin approximation~\eqref{eq':strongform}, it holds that
\begin{align}\label{eq:energy eq}
\frac{\alpha}{2}\norm{v_\plus-u_\plus^\exact}{\HH}^2\le E(v_\plus)-E(u_\plus^\exact)\le \frac{L}{2}\norm{v_\plus-u_\plus^\exact}{\HH}^2
\quad\text{for all }v_\plus\in\XX_\plus.
\end{align}
\end{lemma}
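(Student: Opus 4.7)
The plan is to represent the energy difference as a one-dimensional integral along the segment joining $u_\plus^\exact$ to $v_\plus$, and then use strong monotonicity and Lipschitz continuity on the integrand after exploiting Galerkin orthogonality. Concretely, I would fix $v_\plus \in \XX_\plus$, set $w_\plus := v_\plus - u_\plus^\exact \in \XX_\plus$, and define the real-valued function
\begin{equation*}
\phi(t) := E(u_\plus^\exact + t\,w_\plus), \quad t \in [0,1],
\end{equation*}
so that $E(v_\plus) - E(u_\plus^\exact) = \phi(1) - \phi(0)$. Using the G\^ateaux differentiability from \eqref{axiom:potential} together with the definition $Ev = \Re(P-F)v$, I would verify that $\phi$ is continuously differentiable with
\begin{equation*}
\phi'(t) = \Re\,\dual{A(u_\plus^\exact + t\,w_\plus) - F}{w_\plus},
\end{equation*}
so that the fundamental theorem of calculus yields $E(v_\plus) - E(u_\plus^\exact) = \int_0^1 \phi'(t)\,dt$.

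Next, since $w_\plus \in \XX_\plus$, the Galerkin identity~\eqref{eq':strongform} gives $\dual{Au_\plus^\exact - F}{w_\plus} = 0$, so I may rewrite
\begin{equation*}
\phi'(t) = \Re\,\dual{A(u_\plus^\exact + t\,w_\plus) - Au_\plus^\exact}{w_\plus}.
\end{equation*}
For the lower bound, strong monotonicity~\eqref{axiom:monotone} applied with arguments $u_\plus^\exact + tw_\plus$ and $u_\plus^\exact$ gives $\Re\,\dual{A(u_\plus^\exact + tw_\plus) - Au_\plus^\exact}{tw_\plus} \ge \alpha\,\|tw_\plus\|_\HH^2 = \alpha t^2\,\|w_\plus\|_\HH^2$; dividing by $t > 0$ yields $\phi'(t) \ge \alpha t\,\|w_\plus\|_\HH^2$. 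For the upper bound, Lipschitz continuity~\eqref{axiom:lipschitz} together with Cauchy--Schwarz gives
\begin{equation*}
\phi'(t) \le \|A(u_\plus^\exact + tw_\plus) - Au_\plus^\exact\|_{\HH^*}\,\|w_\plus\|_\HH \le L t\,\|w_\plus\|_\HH^2.
\end{equation*}
The case $t = 0$ is covered by continuity of $\phi'$.

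Finally, integrating the two bounds $\alpha t\,\|w_\plus\|_\HH^2 \le \phi'(t) \le L t\,\|w_\plus\|_\HH^2$ over $t \in [0,1]$ produces exactly
\begin{equation*}
\frac{\alpha}{2}\,\|w_\plus\|_\HH^2 \le E(v_\plus) - E(u_\plus^\exact) \le \frac{L}{2}\,\|w_\plus\|_\HH^2,
\end{equation*}
which is~\eqref{eq:energy eq}. The only mildly technical point is justifying differentiation under the definition of $\phi$ from the mere G\^ateaux differentiability in~\eqref{axiom:potential}; this is not a real obstacle because the difference quotient defining $\phi'(t)$ is literally the G\^ateaux derivative of $P$ at $u_\plus^\exact + tw_\plus$ in direction $w_\plus$, and continuity of $t \mapsto \phi'(t)$ follows from~\eqref{axiom:lipschitz}, so absolute continuity of $\phi$ (needed for the fundamental theorem of calculus) is immediate. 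Apart from this routine verification, the proof is essentially a two-line monotonicity/Lipschitz estimate under an integral sign.
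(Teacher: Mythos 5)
Your proof is correct, and it takes a genuinely different and in fact cleaner route than the paper's. The paper defines $\psi(t) := E(u_\plus^\exact + t(v_\plus - u_\plus^\exact))$, shows $\psi'$ is Lipschitz, invokes Rademacher's theorem to obtain $\psi''$ almost everywhere, writes $E(v_\plus)-E(u_\plus^\exact) = \psi'(0) + \int_0^1 \psi''(t)(1-t)\,dt$ by integration by parts, kills $\psi'(0)$ via Galerkin orthogonality, and then bounds $\psi''$ from above by Lipschitz continuity and from below by a limiting monotonicity computation. You instead stop at the first derivative: after using Galerkin orthogonality to rewrite $\phi'(t) = \Re\,\dual{A(u_\plus^\exact + t w_\plus) - A u_\plus^\exact}{w_\plus}$, you apply strong monotonicity with the pair $(u_\plus^\exact + t w_\plus,\, u_\plus^\exact)$ and divide by $t$, and apply Lipschitz continuity plus duality directly, obtaining $\alpha t\,\|w_\plus\|_\HH^2 \le \phi'(t) \le L t\,\|w_\plus\|_\HH^2$ and then integrating once. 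This avoids Rademacher's theorem, second derivatives, and the integration-by-parts identity entirely; the only regularity you need is that $\phi'$ exists (which is exactly the G\^ateaux differentiability of $E$) and is continuous (which follows from \eqref{axiom:lipschitz}, as you note), so $\phi\in C^1([0,1])$ and the fundamental theorem of calculus applies with no measure-theoretic caveats. In short, your argument buys a shorter proof under weaker technical machinery, at the cost of nothing; the paper's detour through $\psi''$ is not needed once one notices that the monotonicity hypothesis can be applied to the pair $(u_\plus^\exact + t w_\plus, u_\plus^\exact)$ rather than to two nearby points $u_\plus^\exact + (t+r)w_\plus$ and $u_\plus^\exact + t w_\plus$.
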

\begin{proof}
Since $\HH$ is also a Hilbert space over $\R$, we interpret $E$ as an $\R$-functional.
Since $F$ is linear with G\^ateaux derivative $\dual{{\rm d}F(v)}{w}=\dual{F}{w}$ for all $v,w\in\HH$,
the energy $E$ is also G\^ateaux differentiable with $\dual{{\rm d}E(v)}{w}=\Re\dual{{\rm d}P(v)-F}{w} =\Re \dual{Av -F}{w} $.
Define $\psi(t):=E(u_\plus^\exact+t (v_\plus-u_\plus))$ for $t\in[0,1]$.
For $t\in[0,1]$, it holds that
\begin{align}\label{eq:psiprime}
\begin{split}
\psi'(t)&=\lim_{\substack{r\to 0\\r\in\R}}\frac{E\big(u_\plus^\exact+t(v_\plus-u_\plus^\exact)+r(v_\plus-u_\plus^\exact)\big)
-E\big(u_\plus^\exact+t(v_\plus-u_\plus^\exact)\big)}{r}\\
&=\dual{{\rm d}E\big(u_\plus^\exact+t(v_\plus-u_\plus^\exact)\big)}{v_\plus-u_\plus^\exact}\\
&=\Re\dual{A\big(u_\plus^\exact+t(v_\plus-u_\plus^\exact)\big)-F}{v_\plus-u_\plus^\exact}.
\end{split}
\end{align}
Hence, $\psi$ is differentiable.
For $s,t\in[0,1]$, Lipschitz continuity \eqref{axiom:lipschitz} of $A$ proves that
\begin{align*}
|\psi'(s)-\psi'(t)|&=\big|\Re\dual{A\big(u_\plus^\exact+s(v_\plus-u_\plus^\exact)\big)-A\big(u_\plus^\exact+t(v_\plus-u_\plus^\exact)\big)}{v_\plus-u_\plus^\exact}\big|\\
&\le L\norm{(s-t)(v_\plus-u_\plus^\exact)}{\HH}\norm{v_\plus-u_\plus^\exact}{\HH}=L\norm{v_\plus-u_\plus^\exact}{\HH}^2|s-t|,
\end{align*}
i.e., $\psi'$ is Lipschitz continuous with constant $L\norm{v_\plus-u_\plus^\exact}{\HH}^2$.
By Rademacher's theorem, $\psi'$ is almost everywhere differentiable and 
$|\psi''|\le L\norm{v_\plus-u_\plus^\exact}{\HH}^2$ almost everywhere. 
Moreover, the fundamental theorem of calculus applies and integration by parts yields that
\begin{align*}
E(v_\plus)-E(u_\plus^\exact)=\psi(1)-\psi(0)=\psi'(0)+\int_0^1\psi''(t)(1-t)\, dt.
\end{align*}
Since $\XX_\plus\subset\HH$ is a closed subspace, 
there also holds  ${\rm d}P_\plus=A_\plus$  with the restriction $P_\plus:=P|_{\XX_\plus}$.  
Hence, we may also define the restricted energy $E_\plus:=E|_{\XX_\plus}$.
With \eqref{eq:psiprime} and ${\rm d}E_\plus u_\plus^\exact= A_\plus u_\plus^\exact-F_\plus=0$, we see $\psi'(0)=0$.
Therefore,
\begin{align}\label{eq:intform E}
E(v_\plus)-E(u_\plus^\exact)=\int_0^1\psi''(t)(1-t)\, dt.
\end{align}
Since $|\psi''|\le L\norm{v_\plus-u_\plus^\exact}{\HH}^2$ almost everywhere, we get the upper bound in \eqref{eq:energy eq}.
To see the lower bound,  we compute for almost every $t\in[0,1]$
\begin{align*}
\psi''(t)&\reff{eq:psiprime}{=} \lim_{\substack{r\to 0\\r\in\R}}\frac{\Re\dual{A\big(u_\plus^\exact+(t+r)(v_\plus-u_\plus^\exact)\big)-F}{v_\plus-u_\plus^\exact}
-\Re\dual{A\big(u_\plus^\exact+t(v_\plus-u_\plus^\exact)\big)-F}{v_\plus-u_\plus^\exact}}{r}\\
&=\lim_{\substack{r\to 0\\r\in\R}}\frac{\Re\dual{A\big(u_\plus^\exact+(t+r)(v_\plus-u_\plus^\exact)\big)- A\big(u_\plus^\exact+t(v_\plus-u_\plus^\exact)\big)}{r(v_\plus-u_\plus^\exact)}}{r^2}\\
&\reff{axiom:monotone}{\ge} \lim_{\substack{r\to 0\\r\in\R}}\alpha\frac{\norm{r(v_\plus-u_\plus^\exact)}{\HH}^2}{r^2}=\alpha\norm{v_\plus-u_\plus^\exact}{\HH}^2.
\end{align*}
Together with \eqref{eq:intform E}, we conclude the proof.
\end{proof}

\begin{remark}
Lemma~\ref{lemma:energy eq} immediately implies that the Galerkin solution $u_\bullet^\exact\in\XX_\bullet$ 
to \eqref{eq:discrete}  minimizes the energy $E$ in $\XX_\bullet$, i.e., $E(u_\bullet^\exact)\le E(v_\bullet)$ for all $v_\bullet\in\XX_\bullet$.
On the other hand, if $w_\bullet\in\XX_\bullet$ is a minimizer of the energy in $\XX_\bullet$, we deduce  $E(w_\bullet)=E(u_\bullet^\exact)$.
Lemma~\ref{lemma:energy eq} thus implies $w_\bullet=u_\bullet^\exact$.
Therefore, solving the Galerkin formulation \eqref{eq:discrete} is equivalent to the minimization of the energy $E$ in $\XX_\bullet$. 
\qed
\end{remark}

Next, we prove a contraction property as in~\cite[Theorem~20]{dk08}, \cite[Theorem~4.7]{bdk}, and~\cite[Theorem~4.2]{gmz} and, in particular, obtain 
linear convergence of Algorithm~\ref{algorithm} in the sense of \cite{axioms}. 

\begin{theorem}\label{theorem:linear_convergence}
Suppose~\eqref{axiom:monotone}--\eqref{axiom:potential} for the 
operator $A$
and~\eqref{axiom:stability}--\eqref{axiom:reliability} for the 
error estimator. 
 Let $C_\lambda$ be the constant from Lemma~\ref{lemma1}. 
 Let $0 < \theta \leq 1$ and suppose $0 < \lambda < C_\lambda^{-1} \theta $.
Then, there exist constants $0<\qlin<1$ and $\gamma>0$ which depend only on~\eqref{axiom:monotone}--\eqref{axiom:lipschitz} and
\eqref{axiom:stability}--\eqref{axiom:reliability}  as well as on $\lambda$ and $\theta$, such that the following implication holds: 
If the repeat loop of step~{\rm(i)} of Algorithm~\ref{algorithm} terminates after finitely many steps for all $\ell\in\N_0$, then 
\begin{align}\label{eq:contraction}
 \Delta_{\ell+1} \le \qlin\,\Delta_\ell
 \text{ for all }\ell\in\N_0,
 \quad\text{where}\quad
 \Delta_\plus := E(u_\plus^\exact)-E(u^\exact) + \gamma\,\eta_\plus(u_\plus^\exact)^{2}.
\end{align}
Moreover, there exists a constant $\Clin>0$ such that
\begin{align}
\eta_{\ell+k}(u_{\ell+k}^{\inexact})^2\le \Clin \qlin^k \eta_\ell(u_\ell^{\inexact})^2 \text{ for all }k,\ell\in\N_0.
\end{align}
\end{theorem}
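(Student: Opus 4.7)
The plan is to first establish the contraction~\eqref{eq:contraction} for the (non-computable) Galerkin quasi-error $\Delta_\ell$ and then transfer it to the computed iterates $u_\ell^\inexact$ via the estimator equivalence of Lemma~\ref{lemma1}. To this end, abbreviate $\mu_\ell := E(u_\ell^\exact) - E(u^\exact) \ge 0$ and $\eta_\ell := \eta_\ell(u_\ell^\exact)$, so $\Delta_\ell = \mu_\ell + \gamma\,\eta_\ell^2$. Note that $\lambda < C_\lambda^{-1}\theta \le C_\lambda^{-1}$, so both Lemma~\ref{lemma1} and Proposition~\ref{prop:convergence} are applicable.

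The argument rests on three ingredients. First, Lemma~\ref{lemma:energy eq} applied with $\XX_\plus = \XX_{\ell+1}$ and $v_\plus = u_\ell^\exact \in \XX_\ell \subseteq \XX_{\ell+1}$ gives the energy-Pythagoras inequality $\mu_\ell - \mu_{\ell+1} \ge \frac{\alpha}{2}\norm{u_{\ell+1}^\exact - u_\ell^\exact}{\HH}^2$, which in particular forces $\mu_{\ell+1} \le \mu_\ell$. Second, the same lemma with $\XX_\plus = \HH$ combined with reliability~\eqref{axiom:reliability} yields the upper bound $\mu_\ell \le \frac{L}{2}\norm{u^\exact - u_\ell^\exact}{\HH}^2 \le \frac{L(\Crelexact)^2}{2}\,\eta_\ell^2 =: C_{\rm re}\,\eta_\ell^2$. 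Third, Proposition~\ref{prop:convergence} supplies the estimator reduction $\eta_{\ell+1}^2 \le \qest\,\eta_\ell^2 + \Cest\,\norm{u_{\ell+1}^\exact - u_\ell^\exact}{\HH}^2$.

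Combining the second and third ingredients with the first and using $\mu_{\ell+1} \le \mu_\ell$, I obtain, for any $\gamma \in (0, \alpha/(2\Cest)]$, the intermediate estimate $\Delta_{\ell+1} \le \mu_\ell + \gamma\,\qest\,\eta_\ell^2$, in which the $\norm{u_{\ell+1}^\exact - u_\ell^\exact}{\HH}^2$ contributions cancel. To turn this into a strict contraction, I would invoke the reliability-type upper bound on $\mu_\ell$ via a convex-combination trick: for $\kappa \in (0, 1-\qest)$, $\mu_\ell = (1-\kappa)\mu_\ell + \kappa\mu_\ell \le (1-\kappa)\mu_\ell + \kappa\,C_{\rm re}\,\eta_\ell^2$, hence $\Delta_{\ell+1} \le (1-\kappa)\mu_\ell + (\kappa\,C_{\rm re} + \gamma\,\qest)\,\eta_\ell^2$. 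Choosing $\gamma := \kappa\,C_{\rm re}/(1-\qest-\kappa)$ makes $\kappa\,C_{\rm re} + \gamma\,\qest = (1-\kappa)\,\gamma$, so $\Delta_{\ell+1} \le (1-\kappa)\,\Delta_\ell$; that is,~\eqref{eq:contraction} holds with $\qlin := 1-\kappa$. The constant $\kappa$ must finally be taken small enough that the induced $\gamma$ also satisfies $\gamma \le \alpha/(2\Cest)$; since this upper bound is positive and $\kappa\,C_{\rm re}/(1-\qest-\kappa) \to 0$ as $\kappa\to 0^+$, admissible $\kappa$ exists.

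Iterating~\eqref{eq:contraction} gives $\Delta_{\ell+k} \le \qlin^k\,\Delta_\ell$ for all $k\in\N_0$, and the sandwich $\gamma\,\eta_\ell^2 \le \Delta_\ell \le (C_{\rm re}+\gamma)\,\eta_\ell^2$ extracts the estimator form $\eta_{\ell+k}(u_{\ell+k}^\exact)^2 \le \frac{C_{\rm re}+\gamma}{\gamma}\,\qlin^k\,\eta_\ell(u_\ell^\exact)^2$. The passage to the computed iterates then follows from the estimator equivalence~\eqref{eq2:lemma1} applied on both meshes, yielding the claim with $\Clin := \big(\tfrac{1+\lambda C_\lambda}{1-\lambda C_\lambda}\big)^2\,\tfrac{C_{\rm re}+\gamma}{\gamma}$. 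The main obstacle I anticipate is the parameter calibration in the contraction step: the two requirements $\gamma \le \alpha/(2\Cest)$ (needed to absorb the $\norm{u_{\ell+1}^\exact - u_\ell^\exact}{\HH}^2$ term via energy Pythagoras) and $\gamma = \kappa\,C_{\rm re}/(1-\qest-\kappa)$ (needed to close the contraction through reliability) pull in opposite directions and force $\kappa$---and hence $1-\qlin$---to be small, but they remain compatible and deliver an admissible $0 < \qlin < 1$ depending explicitly on $\alpha$, $L$, $\Crelexact$, $\Cstab$, $\Cest$, $\qest$, $\lambda$, and $\theta$.
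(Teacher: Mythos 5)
Your proof is correct and follows essentially the same route as the paper: it combines the energy--error equivalence of Lemma~\ref{lemma:energy eq} (to get the Pythagoras-type inequality $E(u_\ell^\exact)-E(u_{\ell+1}^\exact)\ge\frac{\alpha}{2}\norm{u_{\ell+1}^\exact-u_\ell^\exact}{\HH}^2$), the estimator reduction~\eqref{eq:reduction} from Proposition~\ref{prop:convergence}, and reliability~\eqref{axiom:reliability} to close the contraction, then transfers from $u_\ell^\exact$ to $u_\ell^\inexact$ via Lemma~\ref{lemma1}. The only cosmetic divergence is in the final parameter calibration: the paper fixes $\gamma:=\alpha/(2\Cest)$ up front and then picks the contraction factor as an infimum over an auxiliary $\varepsilon>0$ (splitting $\gamma\qest\eta_\ell^2=\gamma(\qest+\varepsilon)\eta_\ell^2-\gamma\varepsilon\eta_\ell^2$ and converting the subtracted term via reliability), whereas you instead introduce $\kappa\in(0,1-\qest)$, split the energy term $\mu_\ell=(1-\kappa)\mu_\ell+\kappa\mu_\ell$, and couple $\gamma$ to $\kappa$ so the two coefficients align, afterwards taking $\kappa$ small enough that $\gamma\le\alpha/(2\Cest)$ still holds. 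These are dual formulations of the same trick and yield the same structure of $\qlin$ and $\Clin$; in particular, your constant $C_{\rm re}=L(\Crelexact)^2/2$ is the correct reliability constant (the paper's displayed $\Crelexact$ in that step is apparently a typographical slip for $(\Crelexact)^2$).
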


\begin{proof}
Because of nestedness $\XX_\ell \subseteq \XX_{\ell+1} \subset \HH$ for all $\ell \in \N_0$,
Lemma~\ref{lemma:energy eq} proves
\begin{align}\label{eq:energy eq uk}
\frac{\alpha}{2}\norm{u_k^\exact-u_\ell^\exact}{\HH}^2\le E(u_k^\exact)-E(u_\ell^\exact)\le \frac{L}{2}\norm{u_k^\exact-u_\ell^\exact}{\HH}^2
\quad\text{for all }k,\ell,\in\N_0\text{ with }k \leq \ell
\end{align}
and
\begin{align}\label{eq:energy eq u}
\frac{\alpha}{2}\norm{u_k^\exact-u^\exact}{\HH}^2\le E(u_k^\exact)-E(u^\exact)\le \frac{L}{2}\norm{u_k^\exact-u^\exact}{\HH}^2\quad&\text{for all }k\in\N_0.
\end{align}
We set $\gamma:=\alpha / (2\Cest)$.
Together with~\eqref{eq:energy eq uk}, estimator reduction \eqref{eq:reduction} gives
\begin{align*}
\Delta_{\ell+1} &=E(u_{\ell+1}^\exact)-E(u^\exact)+\gamma\eta_{\ell+1}(u_{\ell+1}^\exact)^2\\
&\le  \big(E(u_\ell^\exact)-E(u^\exact)\big)-\big(E(u_\ell^\exact)-E(u_{\ell+1}^\exact)\big)+\gamma\big(\qest\eta_\ell(u_\ell^\exact)^2+\Cest\norm{u_\ell^\exact-u_{\ell+1}^\exact}{\HH}^2\big)\\
&\le  E(u_\ell^\exact)-E(u^\exact)+\gamma\qest \, \eta_\ell(u_\ell^\exact)^2.
\end{align*}
Let $\varepsilon >0$. Combining this estimate with 
reliability~\eqref{axiom:reliability} and \eqref{eq:energy eq u}, we see that
\begin{align*}
\Delta_{\ell+1}
&\leq  E(u_\ell^\exact)-E(u^\exact)+\gamma (\qest+\varepsilon) \, \eta_\ell(u_\ell^\exact)^2 - \gamma \varepsilon \, \eta_\ell(u_\ell^\exact)^2 \\
&\leq  E(u_\ell^\exact)-E(u^\exact)+\gamma (\qest+\varepsilon) \, \eta_\ell(u_\ell^\exact)^2 - \gamma \frac{2 \varepsilon}{L \Crelexact} \big(E(u_\ell^\exact)-E(u^\exact)\big)\\
&= (1-\gamma \frac{2 \varepsilon}{L \Crelexact}) \big(E(u_\ell^\exact)-E(u^\exact)\big) + \gamma (\qest+\varepsilon) \, \eta_\ell(u_\ell^\exact)^2 \\
&\leq \max \Big\{(1-\gamma \frac{2 \varepsilon}{L \Crelexact}),  (\qest+\varepsilon) \Big \}\, \Delta_\ell.
\end{align*}
Defining $0 < \qlin:= \inf_{\varepsilon >0 } \max \big\{(1-\gamma \frac{2 \varepsilon}{L \Crelexact}), 
 (\qest+\varepsilon) \big \} < 1$, we prove \eqref{eq:contraction}. Moreover, induction on $k$ proves that
$ \Delta_{\ell+k} \le \qlin^k\,\Delta_\ell
 \quad\text{for all }k,\ell\in\N_0.$
In combination with \eqref{eq:energy eq u}, reliability~\eqref{axiom:reliability} 
and the estimator equivalence~\eqref{eq2:lemma1} of Lemma~\ref{lemma1} prove that, for all $k,\ell\in\N_0$,
\begin{align*}
 \eta_{\ell+k}(u_{\ell+k}^{\inexact})^2 \stackrel{\eqref{eq2:lemma1}}{\simeq} \eta_{\ell+k}(u_{\ell+k}^\exact)^2
 \simeq \Delta_{\ell+k} \le \qlin^k\,\Delta_\ell
 \simeq \qlin^k\,\eta_{\ell}(u_{\ell}^\exact)^2 \stackrel{\eqref{eq2:lemma1}}{\simeq} \qlin^k\, \eta_{\ell+k}(u_{\ell+k}^{\inexact})^2.
\end{align*}
This concludes the proof. 
\end{proof}

\section{Optimal convergence rates}
\label{section:optimal_rates}

\subsection{Fine properties of mesh-refinement}
The proof of optimal convergence rates requires the following additional properties of the mesh-refinement strategy.
\begin{enumerate}
\renewcommand{\theenumi}{R\arabic{enumi}}
\bf
\item\label{axiom:split}
\rm
\textbf{splitting property:} Each refined element is split in at least 2 and at most in 
$\Cson\ge2$ many sons, i.e., for all $\TT_\plus \in \T$ and 
all $\MM_\plus \subseteq \TT_\plus$, the refined triangulation
$\TT_\circ = \refine(\TT_\plus, \MM_\plus)$ satisfies
\begin{align}\label{eq:split}
	\# (\TT_\plus \setminus \TT_\circ) + \# \TT_\plus \leq \# \TT_\circ
	\leq \Cson \, \# (\TT_\plus \setminus \TT_\circ) + \# (\TT_\plus \cap \TT_\circ).
\end{align}

\bf
\item\label{axiom:overlay} 
\rm
\textbf{overlay estimate:} For all meshes $\TT \in \T$ and $\TT_\plus,\TT_\circ \in \refine(\TT)$
there exists a common refinement $\TT_\plus \oplus \TT_\circ \in \refine(\TT_\plus) \cap \refine(\TT_\circ) \subseteq \refine(\TT)$
which satisfies
\begin{align*}
	\# (\TT_\plus \oplus \TT_\circ) \leq \# \TT_\plus + \# \TT_\circ - \# \TT.
\end{align*}

\bf
\item\label{axiom:mesh_closure} 
\rm
\textbf{mesh-closure estimate:}
 There exists $\Cmesh>0$ such that the sequence $\TT_\ell$ with corresponding 
 sets of marked elements $\MM_\ell \subseteq \TT_\ell$ which is generated by Algorithm~\ref{algorithm}, satisfies
\begin{align*}
	\# \TT_\ell  - \# \TT_0 \leq \Cmesh \sum_{j=0}^{\ell -1} \# \MM_j.
\end{align*}

\end{enumerate}

For newest vertex bisection (NVB), the mesh-closure estimate \eqref{axiom:mesh_closure} has first been proved for $d=2$ in~\cite{bdd} and later for $d \geq 2$ in \cite{stevenson08}. 
While both works require an additional admissibility assumption on $\TT_0$, \cite{kpp} proved that this condition is unnecessary for $d=2$. 
The proof of the overlay estimate \eqref{axiom:overlay} is found in \cite{ckns,stevenson07}.
The lower bound of \eqref{axiom:split} is clearly satisfied for each feasible mesh-refinement strategy. 
For NVB the upper bound of \eqref{axiom:split} is easily verified for $d=2$ with $\Cson=4$,  
and the proof for general dimension $d \geq 2$ can be found in \cite{gss14}.

For red-refinement with first-order hanging nodes, the validity of~\eqref{axiom:split}--\eqref{axiom:mesh_closure} is 
shown in~\cite{bn}. For mesh-refinement strategies in isogeometric analysis, we refer to~\cite{mp} for T-splines and to~\cite{bgmp16,ghp17} for (truncated) hierarchical B-splines.

\begin{remark}
	Using~\eqref{axiom:split} and the definition of $\refine(\cdot)$, an induction argument proves that the lower estimate 
	$\# (\TT_\bullet \setminus \TT_\circ) + \# \TT_\bullet \leq \# \TT_\circ$ in~\eqref{eq:split}
	holds true for all $\TT_\bullet \in \T$ and arbitrary refinements $\TT_\circ \in \refine(\TT_\bullet)$.	
\end{remark}

\subsection{Approximation class}
\label{section:approximation_class}
For $N\in\N_0$, we define the set 
\begin{align}\label{def:TN}
\T_N := \set{\TT_\plus \in \refine(\TT_0)}{\#\TT_\plus - \# \TT_0 \leq N },
\end{align}
of all refinements of $\TT_0$ which have at most $N$ elements more than $\TT_0$.
For $s>0$, we define the approximation norm $\norm{\cdot}{\A_s}$ by

\begin{align}\label{eq:approximatoin_class}
	\norm{u^\exact}{\A_s} := \sup_{N \in \N_0} \Big( (N+1)^s \min_{\TT_\plus \in \T_N} \eta_\plus(u_\plus^\exact) \Big),
\end{align}
where $\eta_\plus(u_\plus^\exact)$ is the error estimator corresponding to the optimal triangulation $\TT_\plus \in \T_N$.
Note that $\norm{u^\exact}{\A_s} < \infty$ implies the existence of a not (necessarily nested)
sequence of triangulations, such that the error estimator $\eta_\plus(u_\plus^\exact)$ corresponding 
to the (non-computable) Galerkin approximation $u_\plus^\exact$ decays at least with algebraic rate 
$s>0$.

\subsection{Main result}
The following theorem is the main result of this work.
It proves that Algorithm~\ref{algorithm} does not only lead
to linear convergence, but also guarantees the best possible algebraic convergence rate
for the error estimator  $\eta_\ell(u_\ell^\inexact)$.

\begin{theorem}\label{theorem:optimal_rate}
Suppose~\eqref{axiom:monotone}--\eqref{axiom:potential} for the nonlinear operator $A$, \eqref{axiom:split}--\eqref{axiom:mesh_closure} for the mesh-refinement, and~\eqref{axiom:stability}--\eqref{axiom:discrete_reliability} for the {\sl a~posteriori} error estimator. 
Let $C_\lambda$ be the constant from Lemma~\ref{lemma1}.
Suppose that $0 < \theta \leq 1$ and $0<\lambda< C_\lambda^{-1} \theta$ satisfy
\begin{align}\label{eq:lambda3}
 \theta'' := \frac{\theta+\lambda C_\lambda}{1-\lambda C_\lambda} < \theta_{\rm opt}:= (1+\Cstab^2 (\Cdrel)^2)^{-1}
\end{align}		
(which is satisfied, e.g., for $0<\theta<\theta_{\rm opt}$ and sufficiently small $\lambda$).
Suppose that the repeat loop of step~{\rm(i)} of Algorithm~\ref{algorithm} terminates after finitely many steps for all $\ell\in\N_0$.
	 Then, for all $s>0$, there holds the equivalence
	 \begin{align}\label{eq:theorem_optimal_rate}
	 	\norm{u^\exact}{\A_s} < \infty \quad \Longleftrightarrow \quad 
		\exists \Copt>0\,\forall \ell\in\N_0\quad
		\eta_\ell(u_\ell^\inexact) \leq \Copt \big( \#\TT_\ell - \# \TT_0 +1  \big)^{-s}. 
	 \end{align}
	 Moreover, there holds $\Copt = \Copt'\norm{u^\exact}{\A_s}$, where $\Copt'>0$ 
	 depends only on $\TT_0$, $\theta$, $\lambda$, $s$, \eqref{axiom:stability}--\eqref{axiom:discrete_reliability}, \eqref{axiom:monotone}--\eqref{axiom:lipschitz},
	 and~\eqref{axiom:split}--\eqref{axiom:mesh_closure}.
\end{theorem}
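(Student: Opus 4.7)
The plan is to follow the rate-optimality paradigm developed in~\cite{axioms}, adapting each step to account for the inexact iterate $u_\ell^\inexact \ne u_\ell^\exact$ via Lemma~\ref{lemma1}. The reverse implication $(\Leftarrow)$ in~\eqref{eq:theorem_optimal_rate} is routine: given $N\in\N_0$, select $\ell$ maximal with $\#\TT_\ell-\#\TT_0\le N$; the upper bound in \eqref{axiom:split} applied to the previous step yields $\#\TT_{\ell+1}\le(\Cson+1)\,\#\TT_\ell$, so $\#\TT_\ell-\#\TT_0 + 1\simeq N+1$, and the estimator equivalence $\eta_\ell(u_\ell^\exact)\simeq\eta_\ell(u_\ell^\inexact)$ of Lemma~\ref{lemma1} transfers the assumed decay of $\eta_\ell(u_\ell^\inexact)$ to $\eta_\ell(u_\ell^\exact)$. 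Thus $\TT_\ell\in\T_N$ realizes the rate and $\norm{u^\exact}{\A_s}<\infty$. I shall concentrate on $(\Rightarrow)$.

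For the forward direction, I first establish the quasi-monotonicity $\eta_\circ(u_\circ^\exact)\le\Cmon\,\eta_\plus(u_\plus^\exact)$ for all $\TT_\circ\in\refine(\TT_\plus)$, following~\cite[Lemma~3.5]{axioms} from \eqref{axiom:stability}--\eqref{axiom:reliability} together with Lemma~\ref{lemma:cea}. The heart of the argument is then an optimal-marking lemma in the spirit of~\cite[Proposition~4.12]{axioms}: for any $\theta''<\theta_{\rm opt}$ there exists $\mu>0$ such that whenever $\TT_\plus\in\refine(\TT_\ell)$ satisfies $\eta_\plus(u_\plus^\exact)\le\mu\,\eta_\ell(u_\ell^\exact)$, the set $\RR_{\ell,\plus}$ from discrete reliability~\eqref{axiom:discrete_reliability} fulfils $\eta_\ell(\RR_{\ell,\plus},u_\ell^\exact)\ge\theta''\,\eta_\ell(u_\ell^\exact)$. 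The derivation combines stability on the non-refined set $\TT_\ell\setminus\RR_{\ell,\plus}\subseteq\TT_\ell\cap\TT_\plus$ with \eqref{axiom:discrete_reliability} and Young's inequality with a parameter $\delta>0$; letting $\delta\to\infty$ and $\mu\to0$ appropriately gives precisely $\theta_{\rm opt}=(1+\Cstab^2\Cdrel^2)^{-1}$.

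Next, to bridge the inexact-solver gap, I note that the algorithm performs D\"orfler marking on $u_\ell^\inexact$ at level $\theta$, whereas the previous step yields D\"orfler for $u_\ell^\exact$ at level $\theta''$. Using stability~\eqref{axiom:stability}, the bound $\Cstab\,\norm{u_\ell^\exact-u_\ell^\inexact}\HH\le\lambda C_\lambda\,\eta_\ell(u_\ell^\inexact)$ from Lemma~\ref{lemma1}, and the estimator equivalence~\eqref{eq2:lemma1}, one computes
\begin{align*}
 \eta_\ell(\RR_{\ell,\plus},u_\ell^\inexact)
 \ge \theta''(1-\lambda C_\lambda)\,\eta_\ell(u_\ell^\inexact) - \lambda C_\lambda\,\eta_\ell(u_\ell^\inexact),
\end{align*}
which exceeds $\theta\,\eta_\ell(u_\ell^\inexact)$ precisely when $\theta''\ge(\theta+\lambda C_\lambda)/(1-\lambda C_\lambda)$, i.e.\ under~\eqref{eq:lambda3}. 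Hence $\RR_{\ell,\plus}$ is admissible in step~(ii) of Algorithm~\ref{algorithm}, and the quasi-minimality of $\MM_\ell$ gives $\#\MM_\ell\le\Cmark\,\#\RR_{\ell,\plus}\le\Cmark\Cdrel\,\#(\TT_\ell\setminus\TT_\plus)$.

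Finally, assuming $\norm{u^\exact}{\A_s}<\infty$, I pick for each $\ell$ an optimal mesh $\TT_\star\in\T_{N_\ell}$ with $N_\ell\simeq\norm{u^\exact}{\A_s}^{1/s}\,\eta_\ell(u_\ell^\exact)^{-1/s}$, so that the approximation class~\eqref{eq:approximatoin_class} and quasi-monotonicity yield $\eta_\star(u_\star^\exact)\le\mu\,\eta_\ell(u_\ell^\exact)/\Cmon$. Setting $\TT_\plus:=\TT_\ell\oplus\TT_\star$, the overlay estimate~\eqref{axiom:overlay} gives $\#(\TT_\ell\setminus\TT_\plus)\le N_\ell$, while quasi-monotonicity ensures $\eta_\plus(u_\plus^\exact)\le\mu\,\eta_\ell(u_\ell^\exact)$, so the two preceding steps apply and deliver $\#\MM_\ell\lesssim\norm{u^\exact}{\A_s}^{1/s}\,\eta_\ell(u_\ell^\inexact)^{-1/s}$. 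Summing via the mesh-closure estimate~\eqref{axiom:mesh_closure} and using linear convergence (Theorem~\ref{theorem:linear_convergence}) in the reverse form $\eta_j(u_j^\inexact)^{-1/s}\le\Clin^{1/s}\qlin^{(\ell-j)/s}\,\eta_\ell(u_\ell^\inexact)^{-1/s}$, the geometric series collapses to $\#\TT_\ell-\#\TT_0\lesssim\norm{u^\exact}{\A_s}^{1/s}\,\eta_\ell(u_\ell^\inexact)^{-1/s}$, which rearranges to~\eqref{eq:theorem_optimal_rate}. The main obstacle is exactly the interplay in Step~three: the inexact solver replaces the algorithm's marking parameter $\theta$ by the strictly larger effective parameter $\theta''$ in the comparison argument, and \eqref{eq:lambda3} is precisely the restriction that still keeps $\theta''$ below the threshold $\theta_{\rm opt}$ required by the optimal-marking lemma.
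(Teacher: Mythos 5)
Your proposal is correct and follows the same overall strategy as the paper's proof. The only structural difference is that you re-derive the internals of the comparison lemma (\cite[Lemma~4.14]{axioms}, quoted in the paper as Lemma~\ref{lemma:doerfler2}) --- quasi-monotonicity, the optimal-marking estimate linked to $\theta_{\rm opt}$ via discrete reliability, and the overlay construction --- whereas the paper treats that lemma as a black box and concentrates on the genuinely new part, namely the transfer of the D\"orfler property from $u_\ell^\exact$ at level $\theta''$ to $u_\ell^\inexact$ at level $\theta$ via Lemma~\ref{lemma1} and condition~\eqref{eq:lambda3}. Your computation of that transfer, the cardinality bound $\#\MM_\ell\lesssim\norm{u^\exact}{\A_s}^{1/s}\eta_\ell(u_\ell^\inexact)^{-1/s}$, and the closing argument via mesh-closure and the geometric series from linear convergence all match the paper.
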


The comparison lemma is found in \cite{axioms}.

\def\Ccomp{C_{\rm comp}}
\begin{lemma}[{\cite[Lemma~4.14]{axioms}}]
\label{lemma:doerfler2}
Suppose~\eqref{axiom:overlay},~\eqref{axiom:stability},~\eqref{axiom:reduction}, and~\eqref{axiom:discrete_reliability}. 
Let $0<\theta''<\theta_{\rm opt}$.
Then, there exists a constant $\Ccomp>0$ such that for all $s>0$ with $\norm{u^\exact}{\A_s} < \infty$ and all $\ell \in \N_0$, there exists $\RR_\ell \subseteq \TT_\ell $ which satisfies
\begin{align}\label{eq:doerfler2_Rbound}
\# \RR_\ell \leq \Ccomp \norm{u^\exact}{\A_s}^{1/s} \eta_\ell(u_\ell^\exact)^{-1/s},
\end{align}
as well as the D\"orfler marking criterion
\begin{align}\label{eq:doerfler2_doerfler}
\theta'' \eta_\ell(u_\ell^\exact) \leq \eta_\ell(\RR_\ell, u_\ell^\exact).
\end{align}
The constant $\Ccomp$ depends only on $\theta'',s$, and the constants of~\eqref{axiom:stability},~\eqref{axiom:reduction} and~\eqref{axiom:discrete_reliability}. \qed
\end{lemma}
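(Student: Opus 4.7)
The plan is to follow the \emph{optimal-overlay} strategy from \cite{ckns,axioms}. Fix $\ell\in\N_0$ and abbreviate $\eta_\ell := \eta_\ell(u_\ell^\exact)$. Using $\norm{u^\exact}{\A_s}<\infty$ and the definition \eqref{eq:approximatoin_class}, I would first select, for a small parameter $\kappa>0$ (to be fixed at the end depending on $\theta_{\rm opt}-(\theta'')^2>0$ and the constants of the axioms), the smallest $N\in\N_0$ with $(N+1)^{-s}\norm{u^\exact}{\A_s}\le\kappa\,\eta_\ell$. This provides $\TT_\varepsilon\in\T_N$ with $\eta_\varepsilon(u_\varepsilon^\exact)\le\kappa\,\eta_\ell$ and $N+1\le\kappa^{-1/s}\norm{u^\exact}{\A_s}^{1/s}\eta_\ell^{-1/s}$.

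Second, I would form the common refinement $\TT_\circ:=\TT_\ell\oplus\TT_\varepsilon\in\refine(\TT_\ell)\cap\refine(\TT_\varepsilon)$ using the overlay estimate~\eqref{axiom:overlay}. Combining \eqref{axiom:overlay} with the iterated lower bound of~\eqref{axiom:split} (cf.\ the remark after it) yields $\#(\TT_\ell\setminus\TT_\circ)\le\#\TT_\circ-\#\TT_\ell\le\#\TT_\varepsilon-\#\TT_0\le N$. Applying discrete reliability~\eqref{axiom:discrete_reliability} to the pair $(\TT_\ell,\TT_\circ)$ then provides $\RR_\ell\subseteq\TT_\ell$ with $\TT_\ell\setminus\TT_\circ\subseteq\RR_\ell$, $\#\RR_\ell\le\Cdrel\,\#(\TT_\ell\setminus\TT_\circ)\le\Cdrel N$, and $\norm{u_\ell^\exact-u_\circ^\exact}\HH\le\Cdrel\,\eta_\ell(\RR_\ell,u_\ell^\exact)$. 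The cardinality bound~\eqref{eq:doerfler2_Rbound} follows immediately with $\Ccomp = \Cdrel\,\kappa^{-1/s}$.

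Third, to establish the D\"orfler criterion~\eqref{eq:doerfler2_doerfler}, I would split $\eta_\ell^2 = \eta_\ell(\RR_\ell,u_\ell^\exact)^2 + \eta_\ell(\TT_\ell\setminus\RR_\ell,u_\ell^\exact)^2$ and estimate the non-refined piece. Since $\TT_\ell\setminus\RR_\ell\subseteq\TT_\ell\cap\TT_\circ$, stability~\eqref{axiom:stability} combined with the discrete reliability bound on $\norm{u_\ell^\exact-u_\circ^\exact}\HH$ gives
\[\eta_\ell(\TT_\ell\setminus\RR_\ell,u_\ell^\exact)\le\eta_\circ(u_\circ^\exact)+\Cstab\,\Cdrel\,\eta_\ell(\RR_\ell,u_\ell^\exact).\]
Since $\TT_\circ\in\refine(\TT_\varepsilon)$, a symmetric application of stability on $\TT_\circ\cap\TT_\varepsilon$ and reduction~\eqref{axiom:reduction} on $\TT_\circ\setminus\TT_\varepsilon$, together with a quasi-orthogonality-type bound on $\norm{u_\circ^\exact-u_\varepsilon^\exact}\HH$, yields $\eta_\circ(u_\circ^\exact)\lesssim\eta_\varepsilon(u_\varepsilon^\exact)\le\kappa\,\eta_\ell$. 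Substituting back and applying Young's inequality $(a+b)^2\le(1+\delta)a^2+(1+\delta^{-1})b^2$, I would arrive at
\[\bigl(1-\delta-C\kappa^2\bigr)\,\eta_\ell^2\le\bigl(1+(1+\delta^{-1})\,\Cstab^2\,(\Cdrel)^2\bigr)\,\eta_\ell(\RR_\ell,u_\ell^\exact)^2.\]
Because of the strict inequality $(\theta'')^2<\theta_{\rm opt}=(1+\Cstab^2(\Cdrel)^2)^{-1}$, it suffices to fix first $\delta$ small enough so that the ratio of the two factors strictly exceeds $(\theta'')^2$, and then $\kappa$ small enough to absorb $C\kappa^2$, which proves~\eqref{eq:doerfler2_doerfler}.

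The main technical obstacle is the quasi-orthogonality estimate $\norm{u_\circ^\exact-u_\varepsilon^\exact}\HH\lesssim\eta_\varepsilon(u_\varepsilon^\exact)$ used above to transfer control from $\TT_\circ$ to $\TT_\varepsilon$. In the linear symmetric framework of~\cite{ckns} this is immediate via Galerkin Pythagoras plus reliability; in the present nonlinear setting it can be deduced from the energy identity of Lemma~\ref{lemma:energy eq} (which exploits the potential structure~\eqref{axiom:potential}) combined with C\'ea's lemma (Lemma~\ref{lemma:cea}) applied on $\TT_\varepsilon$ and on $\TT_\circ$, together with reliability~\eqref{axiom:reliability}. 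This step is also where the precise constant $\theta_{\rm opt}=(1+\Cstab^2(\Cdrel)^2)^{-1}$ is dictated, as it emerges from the coupling of~\eqref{axiom:stability} with~\eqref{axiom:discrete_reliability} in the third step above.
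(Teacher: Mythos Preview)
The paper does not prove this lemma; it is quoted verbatim from \cite[Lemma~4.14]{axioms} with a terminal \qed, so there is no in-paper proof to compare against. Your outline follows the standard overlay argument of \cite{ckns,axioms} and is essentially correct, with one exception.

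Your final paragraph misidentifies the ``technical obstacle''. The bound $\norm{u_\circ^\exact-u_\varepsilon^\exact}\HH\lesssim\eta_\varepsilon(u_\varepsilon^\exact)$ does \emph{not} require quasi-orthogonality, the energy identity of Lemma~\ref{lemma:energy eq}, or the potential assumption~\eqref{axiom:potential}. Since $\TT_\circ\in\refine(\TT_\varepsilon)$, discrete reliability~\eqref{axiom:discrete_reliability} applied to the pair $(\TT_\varepsilon,\TT_\circ)$ gives directly
\[
\norm{u_\circ^\exact-u_\varepsilon^\exact}\HH\le\Cdrel\,\eta_\varepsilon(\RR_{\varepsilon,\circ},u_\varepsilon^\exact)\le\Cdrel\,\eta_\varepsilon(u_\varepsilon^\exact).
\]
Plugging this into~\eqref{axiom:stability}--\eqref{axiom:reduction} on $\TT_\circ$ yields $\eta_\circ(u_\circ^\exact)\lesssim\eta_\varepsilon(u_\varepsilon^\exact)$ (this is precisely the quasi-monotonicity argument of \cite[Lemma~3.5]{axioms}). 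This matters because the lemma as stated assumes only~\eqref{axiom:overlay}, \eqref{axiom:stability}, \eqref{axiom:reduction}, \eqref{axiom:discrete_reliability}; your proposed route through Lemma~\ref{lemma:energy eq} would import~\eqref{axiom:potential} and~\eqref{axiom:reliability}, which are not among the hypotheses.

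A minor point: your cardinality chain invokes the iterated lower bound of~\eqref{axiom:split}, which is~\eqref{axiom:split} rather than~\eqref{axiom:overlay}; strictly speaking the lemma lists only~\eqref{axiom:overlay}, but the inequality $\#(\TT_\ell\setminus\TT_\circ)\le\#\TT_\circ-\#\TT_\ell$ for $\TT_\circ\in\refine(\TT_\ell)$ is a standing consequence of the refinement framework (cf.\ the remark after~\eqref{axiom:split}--\eqref{axiom:mesh_closure}) and is used freely in \cite{axioms} as well.
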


The proof of Theorem~\ref{theorem:optimal_rate} follows ideas from \cite[Theorem~4.1]{axioms}.

\begin{proof}[Proof of Theorem~\ref{theorem:optimal_rate}]
We prove the assertion in three steps.

{\bf Step~1.}\quad The implication ``$\Longleftarrow$'' follows by definition of the approximation class,
the equivalence $\eta_\ell(u_\ell^\exact)\simeq\eta_\ell(u_\ell^\inexact)$ from Lemma~\ref{lemma1},
and the upper bound of \eqref{axiom:split} (cf.~\cite[Proposition~4.15]{axioms}). 
We thus focus on the converse, more important implication  ``$\Longrightarrow$''. 

{\bf Step~2.}\quad
Suppose $\norm{u^\exact}{\A_s} < \infty$. 
By Assumption~\eqref{eq:lambda3}, Lemma~\ref{lemma:doerfler2} provides a set $\RR_\ell\subseteq\TT_\ell$ with~\eqref{eq:doerfler2_Rbound}--\eqref{eq:doerfler2_doerfler}. 
Arguing as in the proof of Lemma~\ref{lemma1}, stability~\eqref{axiom:stability} proves that
\begin{align*}
 \eta_\ell(\RR_\ell,u_\ell^\exact) 
 \reff{axiom:stability}\le \eta_\ell(\RR_\ell,u_\ell^\inexact) + \Cstab\,\norm{u_\ell^\exact-u_\ell^\inexact}\HH
 \reff{eq1:lemma1}\le \eta_\ell(\RR_\ell,u_\ell^\inexact) + \lambda C_\lambda\,\eta_\ell(u_\ell^\inexact),
\end{align*}
where we have used that $C_\lambda = \Cstab\,\frac{q}{1-q}$. Together with $\theta'' \eta_\ell(u_\ell^\exact) \le \eta_\ell(\RR_\ell,u_\ell^\exact)$, this proves
\begin{align*}
 (1-\lambda C_\lambda)\theta''\,\eta_\ell(u_\ell^\inexact)
 \reff{eq2:lemma1}\le \theta''\,\eta_\ell(u_\ell^\exact)
 \le\eta_\ell(\RR_\ell,u_\ell^\exact)
 \le\eta_\ell(\RR_\ell,u_\ell^\inexact) + \lambda C_\lambda\,\eta_\ell(u_\ell^\inexact)
\end{align*}
and results in
\begin{align}\label{eq37*}
 \theta\,\eta_\ell(u_\ell^\inexact) \reff{eq:lambda3}= \Big((1-\lambda C_\lambda)\theta'' - \lambda C_\lambda\Big)\,\eta_\ell(u_\ell^\inexact) \le \eta_\ell(\RR_\ell,u_\ell^\inexact).
\end{align}
Hence, $\RR_\ell$ satisfies the D\"orfler marking for $u_\ell^\inexact$ with parameter $\theta$. By choice of $\MM_\ell$ in step~(ii) of Algorithm~\ref{algorithm}, we thus infer that
 \begin{align*}
 	\# \MM_\ell \stackrel{\eqref{eq37*}}{\leq} \Cmark \# \RR_\ell 
 	\stackrel{\eqref{eq:doerfler2_Rbound}}{\leq} \Cmark \Ccomp \norm{u^\exact}{\A_s}^{1/s} \eta_\ell(u_\ell^\exact)^{-1/s}
	\reff{eq2:lemma1}\simeq \norm{u^\exact}{\A_s}^{1/s}\,\eta_\ell(u_\ell^\inexact)^{-1/s}
	\quad\text{for all $\ell\in\N_0$}.
 \end{align*}
The mesh-closure estimate~\eqref{axiom:mesh_closure} guarantees that
\begin{align}\label{eq:theorem_optimal_rate_proof}
	\# \TT_\ell - \# \TT_0 +1 
	\lesssim \sum_{j=0}^{\ell-1} \# \MM_j 
	\lesssim \norm{u^\exact}{\A_s}^{1/s}\,\sum_{j=0}^{\ell-1} \eta_j(u_j^{\inexact})^{-1/s}  \quad \text{for all } \ell>0.
\end{align}

{\bf Step~3.}\quad  
The linear convergence of Theorem \ref{theorem:linear_convergence} implies $\eta_\ell(u_\ell^\inexact) \leq \Clin \qlin^{\ell - j} \eta_j(u_j^\inexact)$ for all $0 \leq j \leq \ell$.
In particular, this leads to
\begin{align*}
	\eta_j(u_j^{\inexact})^{-1/s} \leq \Clin^{1/s} \qlin^{(\ell -j )/s} \eta_\ell(u_\ell^{\inexact})^{-1/s}
	\quad\text{for all }0\le j\le \ell.
\end{align*}
By use of the geometric series with $0 < \qlin^{1/s} <1$,  we obtain that
\begin{align*}
	\sum_{j=0}^{\ell-1} \eta_j(u_j^{\inexact})^{-1/j} 
	\leq \Clin^{1/s} \eta_\ell(u_\ell^{\inexact})^{-1/s} \sum_{j=0}^{\ell-1} (\qlin^{1/s})^{\ell -j} 
	\leq \frac{\Clin^{1/s}}{1-\qlin^{1/s}} \, \eta_\ell(u_\ell^{\inexact})^{-1/s}.
\end{align*}
Combining the latter estimate with \eqref{eq:theorem_optimal_rate_proof}, we derive that
\begin{align*}
		\# \TT_\ell - \# \TT_0 +1 \lesssim \norm{u^\exact}{\A_s}^{1/s}\,\eta_\ell(u_\ell^{\inexact})^{-1/s}  \quad \text{for all } \ell>0.
\end{align*}
Since $\eta_0(u_0) \simeq \eta_0(u_0^\exact) \lesssim \norm{u^\exact}{\A_s}$, the latter inequality holds, in fact, for all $\ell \geq 0$.
Rearranging this estimate, we conclude the proof of \eqref{eq:theorem_optimal_rate}.
\end{proof}

\subsection{(Almost) Optimal computational work}
We show that Algorithm~\ref{algorithm} does not only lead to optimal algebraic convergence rates
for the error estimator $\eta_\ell(u_\ell^\inexact)$, but also guarantees that 
the overall cost of the algorithm is asymptotically (almost) optimal. 

\begin{itemize}
\item 
We suppose that the linear system involved in the computation of each step of the discrete Picard iteration 
$u_\ell^n := \Phi_\ell(u_\ell^{n-1})$, see Section~\ref{section:discreteproblem}, can be solved (e.g., by multigrid)
in linear complexity $\OO(\# \TT_\ell)$. Morever, we suppose that the evaluation of $\dual{A u_\ell^{n-1} -F}{v_\ell}$ and $\eta_\ell(T,v_\ell)$
for one fixed $v_\ell \in \XX_\ell$ and $T\in\TT_\ell$ is of order $\OO(1)$. Then, with $\#\pic(\ell)\ge1$ the number of Picard iterates 
in step~{\rm (i)} of Algorithm~\ref{algorithm}, we require $\OO(\# \pic(\ell) \, \# \TT_\ell)$ operations to compute the discrete 
solution $u_\ell \in \XX_\ell$. 
\item
We suppose that the set $\MM_\ell$ in step~{\rm (ii)} as well as the local
mesh-refinement $\TT_{\ell+1} := \refine(\TT_\ell,\MM_\ell)$ in step~{\rm (iii)} of Algorithm~\ref{algorithm} are
performed in linear complexity $\OO(\# \TT_\ell)$; see, e.g., \cite{stevenson07} with $\Cmark = 2$ for step~{\rm (ii)}.
\end{itemize}
Since one step of the adaptive algorithm depends on the full history of the adaptive meshes, the overall computational cost for 
the $\ell$-th step of Algorithm~\ref{algorithm} thus amounts to 
\begin{align*}
	\OO \Big( \sum_{j=0}^{\ell}  \# \pic(j)\,\# \TT_j \Big).
\end{align*}
\emph{Optimal convergence behavior} of Algorithm~\ref{algorithm} means that, given $\norm{u^\exact}{\A_s} < \infty$, the error estimator
$\eta_\ell(u_\ell)$ decays with rate $s>0$ with respect to the degrees of freedom $\OO(\# \TT_\ell)$; see Theorem~\ref{theorem:optimal_rate}.
\emph{Optimal computational complexity} would mean that, given $\norm{u^\exact}{\A_s} < \infty$, the error estimator $\eta_\ell(u_\ell)$ decays with rate $s>0$ with respect to the computational cost; 
see~\cite{feischlphd} for linear problems. Up to some small perturbation, the latter is stated in the following theorem.

\begin{theorem}\label{theorem:optimal_comp}
Suppose the assumptions of Theorem~\ref{theorem:optimal_rate} and $s>0$. Then, it holds that 
\begin{align*}
\norm{u^\exact}{\A_s} < \infty \, \Longrightarrow \, \forall \varepsilon>0 \, \exists \Cwork>0 \, \forall \ell \in \N_0 \,\,\,
\eta_\ell(u_\ell^\inexact) \leq \Cwork \Big(  \sum_{j=0}^{\ell}  \# \pic(j)\,\# \TT_j \Big)^{-(s - \varepsilon)} 
\end{align*}
There holds $\Cwork = \Cwork'\,\norm{u^\exact}{\A_s}$, where $\Cwork'>0$ depends only on $\theta$, $\lambda$, $s$, $\varepsilon$, \eqref{axiom:stability}--\eqref{axiom:discrete_reliability}, \eqref{axiom:monotone}--\eqref{axiom:lipschitz},
	 and~\eqref{axiom:split}--\eqref{axiom:mesh_closure}, as well as on $\TT_0$, $\eta_0(u_0)$, and $\#\pic(0)$. 
\end{theorem}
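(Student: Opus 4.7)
The plan is to chain together the optimal rate result (Theorem~\ref{theorem:optimal_rate}), the logarithmic bound on the number of Picard iterations (Proposition~\ref{proposition:nested_iteration}), and the linear convergence of the estimator (Theorem~\ref{theorem:linear_convergence}). The small $\varepsilon$-loss in the exponent will come from trading a logarithm for an arbitrarily small polynomial factor.

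\textbf{Step 1: Replace $\#\pic(j)$ by a polynomial expression in $\eta_{j-1}/\eta_j$.} From Proposition~\ref{proposition:nested_iteration} we have $\#\pic(j) \le \Cpic + \frac{1}{|\log q|}\log\max\{1,\eta_{j-1}(u_{j-1}^\inexact)/\eta_j(u_j^\inexact)\}$. Fix a small $\delta>0$ (to be chosen later) and apply the elementary bound $\log x \le x^\delta/\delta$ for $x\ge 1$. Together with $\max\{1,y\}^\delta \le 1 + y^\delta$, this yields a constant $C_\delta>0$ such that
\begin{align*}
 \#\pic(j) \le C_\delta\Big(1 + \big(\eta_{j-1}(u_{j-1}^\inexact)/\eta_j(u_j^\inexact)\big)^\delta\Big)
 \quad\text{for all }j\in\N.
\end{align*}
The initial count $\#\pic(0)$ is absorbed into the final constant.

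\textbf{Step 2: Bound $\#\TT_j$ and $\eta_{j-1}$ uniformly.} By Theorem~\ref{theorem:optimal_rate} and the splitting property~\eqref{axiom:split}, $\#\TT_j \lesssim \norm{u^\exact}{\A_s}^{1/s}\eta_j(u_j^\inexact)^{-1/s}$. By linear convergence (Theorem~\ref{theorem:linear_convergence}), $\eta_{j-1}(u_{j-1}^\inexact)^2 \le \Clin\,\eta_0(u_0^\inexact)^2$, so that $\eta_{j-1}(u_{j-1}^\inexact)$ is bounded by a constant multiple of $\eta_0(u_0^\inexact)$, which in turn is $\lesssim \norm{u^\exact}{\A_s}$ by the definition of the approximation class. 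Multiplying the two bounds gives
\begin{align*}
 \#\pic(j)\,\#\TT_j
 \lesssim \norm{u^\exact}{\A_s}^{1/s}\,\eta_j(u_j^\inexact)^{-1/s}
 + \norm{u^\exact}{\A_s}^{1/s+\delta}\,\eta_j(u_j^\inexact)^{-1/s-\delta}.
\end{align*}

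\textbf{Step 3: Sum via linear convergence.} Theorem~\ref{theorem:linear_convergence} yields $\eta_j(u_j^\inexact) \ge c\,\qlin^{-(\ell-j)/2}\,\eta_\ell(u_\ell^\inexact)$ for $0\le j\le \ell$. Consequently, for any exponent $r>0$,
\begin{align*}
 \sum_{j=0}^\ell \eta_j(u_j^\inexact)^{-r}
 \lesssim \eta_\ell(u_\ell^\inexact)^{-r}\sum_{j=0}^\ell \qlin^{r(\ell-j)/2}
 \lesssim \eta_\ell(u_\ell^\inexact)^{-r},
\end{align*}
the geometric series being summable since $0<\qlin<1$. Applying this with $r=1/s$ and $r=1/s+\delta$, the two summands from Step~2 are controlled by $\norm{u^\exact}{\A_s}^{1/s+\delta}\eta_\ell(u_\ell^\inexact)^{-1/s-\delta}$ (the second dominates since $\eta_\ell(u_\ell^\inexact)\to 0$, up to constants absorbing boundedness at small $\ell$). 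Thus
\begin{align*}
 \sum_{j=0}^\ell \#\pic(j)\,\#\TT_j
 \lesssim \norm{u^\exact}{\A_s}^{1/s+\delta}\,\eta_\ell(u_\ell^\inexact)^{-(1/s+\delta)}.
\end{align*}

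\textbf{Step 4: Choose $\delta$.} Rearranging and then selecting $\delta:=\varepsilon/\bigl(s(s-\varepsilon)\bigr)$ so that $1/(1/s+\delta)=s-\varepsilon$, we obtain
\begin{align*}
 \eta_\ell(u_\ell^\inexact)
 \le \Cwork'\norm{u^\exact}{\A_s}\,\Big(\sum_{j=0}^\ell \#\pic(j)\,\#\TT_j\Big)^{-(s-\varepsilon)}.
\end{align*}
The main obstacle is purely bookkeeping: handling the logarithmic factor in Proposition~\ref{proposition:nested_iteration} in a way that is additive across the sum, which is why the polynomial envelope $x\mapsto x^\delta$ is used and why the final rate degrades from $s$ to $s-\varepsilon$. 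The initial index $j=0$ and the factor $\#\pic(0)$ only contribute bounded additive terms that are absorbed into $\Cwork'$.
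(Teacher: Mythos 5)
Your proof is correct, and it takes a genuinely different route to the $\varepsilon$-loss than the paper. Both proofs start from Proposition~\ref{proposition:nested_iteration}, Theorem~\ref{theorem:optimal_rate}, and the geometric-series summation via Theorem~\ref{theorem:linear_convergence}; the divergence is in how the logarithmic factor from the Picard count is eliminated. The paper first isolates a single factor $\log(C_0/\eta_\ell(u_\ell^\inexact))$ (using quasi-monotonicity $\eta_\ell(u_\ell^\inexact)\lesssim\eta_k(u_k^\inexact)$ for $k\le\ell$ to replace each $|\log\eta_j|$ by this common envelope), pulls it outside the sum, and obtains the intermediate bound $\bigl(\sum_j\#\pic(j)\#\TT_j\bigr)^s\bigl(\log(C_0/\eta_\ell)\bigr)^{-s}\eta_\ell\le C$; only in a final step does it convert the logarithm to a polynomial via the asymptotic $t^{\delta/s}\log(C_0/t)\to0$. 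You instead convert the logarithm locally, term by term, using the elementary inequality $\log x\le x^\delta/\delta$, which turns each $\#\pic(j)$ into $C_\delta\bigl(1+(\eta_{j-1}/\eta_j)^\delta\bigr)$; the uniform bound $\eta_{j-1}\lesssim\eta_0\lesssim\norm{u^\exact}{\A_s}$ then detaches $\eta_{j-1}^\delta$, and the two resulting sums $\sum_j\eta_j^{-1/s}$ and $\sum_j\eta_j^{-1/s-\delta}$ are summed geometrically. Both arrive at the same choice of $\delta$ enforcing $s/(1+\delta)$ (paper) resp.\ $1/(1/s+\delta)$ (yours) equal to $s-\varepsilon$, and the constant dependence on $\TT_0$, $\eta_0(u_0)$, and $\#\pic(0)$ is recovered in the same way. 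Your argument is somewhat shorter and avoids the asymptotic limit in the paper's Step~2; the paper's version separates more explicitly the ``log-corrected optimal rate'' from the ``lose $\varepsilon$ to drop the log'' step, which may be more transparent about where the rate degradation comes from. One small clean-up: the step ``the second summand dominates'' needs the quantitative inequality $\eta_\ell(u_\ell^\inexact)\le C\norm{u^\exact}{\A_s}$ (which you do have, via $\eta_\ell\lesssim\eta_0\lesssim\norm{u^\exact}{\A_s}$), not merely $\eta_\ell\to0$; it is worth stating this explicitly since the constant contributes to $\Cwork'$.
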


\begin{proof}
We prove the assertion in three steps.

{\bf Step~1.}\quad We show that there exist constants $C_0,C >0$ such that 
\begin{align}\label{eq:comp}
 \Big( \sum_{j=0}^{\ell}  \# \pic(j)\,\# \TT_j \Big)^s \Big( \log(C_0 /  \eta_\ell(u_\ell^\inexact)) \Big)^{-s} \eta_\ell(u_\ell^\inexact)  \leq C  \quad \text{for all } \ell \geq 0. 
\end{align}
Proposition~\ref{proposition:nested_iteration} gives a bound for the number of Picard iterations. 
For $j \geq 1$, it holds that
\begin{align}\label{eq:com:picard}
\begin{split}
	\# \pic(j) &\reff{eq:nested_iteration}\leq \Cpic + \frac{1}{|\log q|} \, \log \big(   \max \big\{ 1 \,,\, \eta_{j-1}(u_{j-1}^\inexact) / \eta_j(u_j^\inexact) \big\}  \big) \\
	&\leq \Cpic + \frac{1}{|\log q|} \big| \log\big(\eta_{j-1}(u_{j-1}^\inexact) / \eta_j(u_j^\inexact) \big) \big| \\
	&\leq \Cpic + \frac{1}{|\log q|}  \Big( \big| \log(\eta_{j-1}(u_{j-1}^\inexact)) \big| + \big| \log(\eta_j(u_j^\inexact))\big| \Big).
\end{split}
\end{align}
Quasi monotonicity~\cite[Lemma 3.5]{axioms} and Lemma~\ref{lemma1} imply 
$\eta_{\ell}(u_{\ell}^\inexact) \simeq \eta_\ell(u_\ell^\exact) \lesssim \eta_{k}(u_{k}^\exact) \simeq \eta_{k}(u_{k}^\inexact)$
and hence $\eta_{\ell}(u_{\ell}^\inexact) \leq \Cmon \eta_{k}(u_{k}^\inexact)$ for all $k \leq \ell$.
The constant $\Cmon>0$ depends only on~\eqref{axiom:stability}--\eqref{axiom:discrete_reliability}. 
With $C_0:=  e \, \Cmon \eta_0(u_0^\inexact)$,  there holds $\Cmon^{-1} C_0^{-1} \eta_{\ell} (u_{\ell}^\inexact) \leq C_0^{-1} \eta_{k}(u_{k}^\inexact) \leq e^{-1} <1 $ for all 
$k \leq \ell$. 
Hence, we obtain that
\begin{align*}
&\big| \log(\eta_{k}(u_{k}^\inexact)) \big| = \big| \log(C_0) +  \log(C_0^{-1} \eta_{k}(u_{k}^\inexact)) \big|
\leq  \big| \log(C_0)\big| + \big| \log(C_0^{-1} \Cmon^{-1} \eta_{\ell}(u_{\ell}^\inexact)) \big| \\
&\qquad
\leq \big| \log(C_0)\big| + \big| \log(\Cmon^{-1}) \big| + \big|  \log(C_0^{-1} \eta_{\ell}(u_{\ell}^\inexact)) \big|
=C' +  \log( C_0 / \eta_{\ell}(u_{\ell}^\inexact)),
\end{align*}
where the constant $C' := \big| \log(C_0)\big| + \big| \log(\Cmon^{-1}) \big|$ depends only on $\eta_0(u_0^\inexact)$ and $\Cmon$.
Combining this estimate for $1 \leq j \leq \ell$ with~\eqref{eq:com:picard}, we obtain that
\begin{align*}
	\# \pic(j) \leq C'' + 	\frac{2}{|\log q|}  \log(C_0 / \eta_{\ell}(u_{\ell}^\inexact)) \quad \text{with }  C'':= \Cpic + C'.	
\end{align*}
By definition of $C_0$, it holds that $C_0 /  \eta_{\ell}(u_{\ell}^\inexact) \geq e$ and hence $\log(C_0 / \eta_{\ell}(u_{\ell}^\inexact)) \geq \log(e) = 1$ for all $\ell \geq 0$.
This yields that
\begin{align*}
\sum_{j=1}^{\ell}  \# \pic(j)\,\# \TT_j &\leq \sum_{j=1}^{\ell} \# \TT_j \Big( C'' + \frac{2}{|\log q|}   \log(C_0 / \eta_{\ell}(u_{\ell}^\inexact))\Big) \\
&\leq  \Big(C'' + \frac{2}{|\log q|} \Big) \log(C_0 / \eta_{\ell}(u_{\ell}^\inexact)) \sum_{j=1}^{\ell} \# \TT_j.
\end{align*}
Using $\# \TT_j \leq \# \TT_0 (\# \TT_j - \# \TT_0 +1)$ (see, e.g., \cite[Lemma 22]{helmholtz}) and $C''' := \max\big\{C'' + {2}/|\log q|\,,\,\#\pic(0)\big\}$ as well as Theorem~\ref{theorem:optimal_rate}, we obtain that
\begin{align*}
	\sum_{j=0}^{\ell}  \# \pic(j)\,\# \TT_j 
	&\leq C''' \# \TT_0 \log(C_0 / \eta_{\ell}(u_{\ell}^\inexact)) \sum_{j=0}^{\ell} (\# \TT_j - \# \TT_0 +1) \\
	&\stackrel{\eqref{eq:theorem_optimal_rate}}{\leq} C''' \# \TT_0 \Copt^{-1/s} \log(C_0 / \eta_{\ell}(u_{\ell}^\inexact)) \sum_{j=0}^{\ell}  \eta_j(u_j^{\inexact})^{-1/s}
\end{align*} 
We argue as in the proof of Theorem~\ref{theorem:optimal_rate}.
The linear convergence from Theorem~\ref{theorem:linear_convergence} with $0 < \qlin^{1/s} < 1$ implies that
\begin{align}
\sum_{j=0}^{\ell}  \# \pic(j)\,\# \TT_j 
&\leq  C''' \, \# \TT_0 \, \Copt^{-1/s} \,\frac{\Clin^{1/s}}{1-\qlin^{1/s}} \,\, \log(C_0 / \eta_{\ell}(u_{\ell}^\inexact)) \, \eta_\ell(u_\ell^{\inexact})^{-1/s}.
\end{align}
Rearranging the terms we conclude~\eqref{eq:comp} with $C:=\Big( C''' \# \TT_0 \, \frac{\Clin^{1/s}}{1-\qlin^{1/s}} \Big)^s\,\Copt^{-1}$. \\

{\bf Step~2.}\quad Let $s,\delta>0$. Recall that $t^{\delta/s} \log(C_0 /t) \to 0$ as $t \to 0$. Hence, it follows
$\big( \log(C_0 / \eta_\ell(u_\ell\inexact))\big) \eta_\ell(u_\ell\inexact)^{\delta/s} \lesssim 1$ as $\ell \to \infty$.
This implies $\big( \log(C_0 / \eta_\ell(u_\ell\inexact) )\big)^{-s} \eta_\ell(u_\ell\inexact)^{-\delta} \gtrsim 1$ 
and results in
$\big( \log(C_0 / \eta_\ell(u_\ell\inexact) )\big)^{-s} \eta_\ell(u_\ell\inexact) 
	 \gtrsim \eta_\ell(u_\ell\inexact)^{1+\delta}$,
where the hidden constant depends only on $\delta$, $s$, $C_0$, and $\max_{\ell\in\N_0}\eta_\ell(u_\ell)\le\Cmon\,\eta_0(u_0)$.

{\bf Step~3.}\quad From step 1 and step~{\rm 2}, we infer that
\begin{align*}
	\Big( \sum_{j=0}^{\ell} \# \TT_j (1 + \# \pic(j)) \Big)^s \eta_\ell(u_\ell\inexact)^{1+\delta} \leq \Cwork,
\end{align*}
where $\Cwork\ge1$ depends only on $C$, $\delta$, $s$, $C_0$, $\Cmon$, and $\eta_0(u_0)$.
Choose $\delta>0$ such that $\frac{s}{1+\delta} = s- \varepsilon$. Then, we finally obtain that
\begin{align*}
	\Big( \sum_{j=0}^{\ell} \# \TT_j (1 + \# \pic(j)) \Big)^{s- \varepsilon} \eta_\ell(u_\ell\inexact) \leq \Cwork^{1/(1+\delta)} 
	= \Cwork^{(s-\varepsilon)/s} \leq \Cwork.
\end{align*}
Rearranging the terms concludes the proof.
\end{proof}

\section{Optimal convergence of full sequence}
\label{section:full}
In this section, we reformulate Algorithm~\ref{algorithm} in the sense that the discrete solutions $(u_\ell^\inexact)_{\ell\in\N_0}$ correspond to a subsequence $(\widetilde u_{\ell_k})_{k\in\N_0}$ obtained by the following algorithm which also accounts for the Picard iterates; see Remark~\ref{remark:interpretation} below for details. To distinguish between the quantities of Algorithm~\ref{algorithm} and Algorithm~\ref{algorithm:new}, we use the tilde for all quantities of Algorithm~\ref{algorithm:new}, e.g., $\widetilde\eta_\ell(\widetilde u_\ell)$ is the error estimator corresponding to some $\widetilde u_\ell\in\widetilde\XX_\ell$.

\bigskip

\begin{algorithm}\label{algorithm:new}
\textsc{Input:} Initial triangulation $\widetilde\TT_0:=\TT_0$, adaptivity parameters $0<\theta\le1$, $\lambda\ge0$, and $\Cmark\ge1$, initial guess $\widetilde u_{-1}:=0$.\\
\textsc{Adaptive loop:} For all $\ell=0,1,2,\dots$, iterate the following steps~{\rm(i)--(iv)}.
\begin{enumerate}
\item[\rm(i)] Compute discrete Picard iterate $\widetilde u_\ell = \widetilde\Phi_\ell \widetilde u_{\ell-1}\in\widetilde\XX_\ell$.
\item[\rm(ii)] Compute refinement indicators $\widetilde\eta_\ell(T,\widetilde u_\ell)$ for all $T\in\widetilde\TT_\ell$.
\item[\rm(iii)] If $\norm{\widetilde u_\ell-\widetilde u_{\ell-1}}\HH \le \lambda\,\widetilde\eta_\ell(\widetilde u_\ell)$, do the following steps~{\rm(a)--(b)}.
\begin{enumerate}
\item[\rm(a)] Determine a set $\widetilde\MM_\ell\subseteq\widetilde\TT_\ell$ of marked elements which has minimal cardinality up to the multiplicative constant $\Cmark$ and which satisfies the D\"orfler marking criterion $\theta\,\widetilde\eta_\ell(\widetilde u_\ell) \le \widetilde\eta_\ell(\widetilde\MM_\ell,\widetilde u_\ell)$.
\item[\rm(b)] Generate the new triangulation $\widetilde\TT_{\ell+1} := \refine(\widetilde\TT_\ell,\widetilde\MM_\ell)$ by refinement of (at least) all marked elements $T\in\widetilde\MM_\ell$.
\end{enumerate}
\item[\rm(iv)] Else, define $\widetilde\TT_{\ell+1} := \widetilde\TT_\ell$.
\end{enumerate}
\textsc{Output:} Sequence of discrete solutions $\widetilde u_\ell\in\widetilde\XX_\ell$ and corresponding estimators $\widetilde\eta_\ell(\widetilde u_\ell)$.\qed
\end{algorithm}

\begin{remark}\label{remark:interpretation}
With $\ell_{-1}:=-1$ and $\widetilde\TT_{-1}:=\TT_0$, define an index sequence $(\ell_k)_{k\in\N_0}$ inductively as follows:
\begin{itemize}
 \item $\ell_k > \ell_{k-1}$ is the smallest index such that $\widetilde\TT_{\ell_k+1}\neq\widetilde\TT_{\ell_k}$ 
 \item resp.\ $\ell_k=\infty$ if such an index does not exist.
\end{itemize}
In explicit terms, the indices $\ell_k$ are chosen such that the mesh is refined after the computation of $\widetilde u_{\ell_k}$. Hence,
Algorithm~\ref{algorithm} and Algorithm~\ref{algorithm:new} are related as follows:
\begin{itemize}
\item $\widetilde u_{\ell_k} = u_k^\inexact$ for all $k\in\N_0$ with $\ell_k<\infty$;
\item $\widetilde u_{\ell_{k-1}+m} = u_k^m$ for all $k\in\N_0$ with $\ell_{k-1}<\infty$ and all $m\in\N_0$ with $\ell_{k-1}+m \le \ell_k$;
\item $\widetilde\TT_\ell = \TT_k$ for all $\ell=\ell_{k-1}+1,\dots,\ell_k$ if $\ell_k<\infty$;
\item $\widetilde\TT_\ell = \TT_k$ for all $\ell\ge \ell_{k-1}+1$ if $\ell_{k-1}<\infty$ and $\ell_k=\infty$.
\end{itemize}
\end{remark}

The observations of Remark~\ref{remark:interpretation} allow to transfer the results for Algorithm~\ref{algorithm} to Algorithm~\ref{algorithm:new}.
As a consequence of our preceding analysis, we get the following theorem:

\begin{theorem}
Suppose~\eqref{axiom:monotone}--\eqref{axiom:potential} for the nonlinear operator 
and~\eqref{axiom:stability}--\eqref{axiom:reliability} for the {\sl a~posteriori} error estimator. Let $C_\lambda$ be the constant from Lemma~\ref{lemma1}. Let $0 < \theta \leq 1$ and suppose $0 < \lambda < C_\lambda^{-1} \theta $.
Then, the output of Algorithm~\ref{algorithm:new} satisfies
\begin{align}\label{eq1:theorem:new}
\lim_{\ell\to\infty}\norm{u^\exact-\widetilde u_\ell}\HH = 0 = \lim_{\ell\to\infty}\widetilde\eta_\ell(\widetilde u_\ell).
\end{align}
Additionally, suppose \eqref{axiom:split}--\eqref{axiom:mesh_closure} for the mesh-refinement, \eqref{axiom:discrete_reliability} for the {\sl a~posteriori} error estimator, and that
\begin{align}\label{eq':lambda3}
 \theta'' := \frac{\theta+\lambda C_\lambda}{1-\lambda C_\lambda} < \theta_{\rm opt}:= \big(1+\Cstab^2 (\Cdrel)^2\big)^{-1}.
\end{align}
\renewcommand{\thefootnote}{$\dagger$}
Then, for all $s>0$, it holds that \footnote{It is necessary to demand $\ell\ge\ell_0$. Indeed, the finitely many estimator values $\widetilde\eta_\ell(\widetilde u_\ell)$ for $\ell<\ell_0$ are obviously bounded by a constant.
However, this constant does not only depend on the  constants on which $\widetilde C_{\rm opt}$ resp.\ $\widetilde C_{\rm work}$ depend.}
	 \begin{align}\label{eq':theorem_optimal_rate}
	 	\norm{u^\exact}{\A_s} < \infty \quad \Longleftrightarrow \quad 
		\exists\widetilde C_{\rm opt}>0\,\forall \ell\ge\ell_0\quad\widetilde\eta_\ell(\widetilde u_\ell) \leq \widetilde C_{\rm opt} \big( \#\widetilde\TT_\ell - \#\TT_0 +1  \big)^{-s}.
	 \end{align}
	Moreover, if $\# \TT_\ell \to \infty$ as $\ell \to \infty$, then it holds that $^\dagger$
	 \begin{align}\label{eq':theorem_optimal_complexity}
	 \norm{u^\exact}{\A_s} < \infty \quad \Longrightarrow \quad \forall \varepsilon >0 \, \exists \tildeCwork>0 \, \forall \ell \ge \ell_0
	 \quad \widetilde \eta_\ell(\widetilde u_\ell) \leq  \tildeCwork \Big( \sum_{j=0}^{\ell} \# \widetilde\TT_j \Big)^{-(s-\varepsilon)}. 
	 \end{align}
	 	 Finally, it holds $\widetilde C_{\rm opt} = C\,C_{\rm opt}$ and $\tildeCwork = C\,\Cwork$, where $C_{\rm opt}$ is the constant from Theorem~\ref{theorem:optimal_rate}, $\Cwork>0$ is the constant from Theorem~\ref{theorem:optimal_comp}, and $C>0$ depends only on~\eqref{axiom:monotone}--\eqref{axiom:lipschitz}, \eqref{axiom:reliability}, \eqref{axiom:split}, $\TT_0$, and $s$. 
\end{theorem}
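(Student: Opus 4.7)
The plan is to use the index correspondence from Remark~\ref{remark:interpretation} to reduce each of the three claims for the full sequence $(\widetilde u_\ell)_{\ell\in\N_0}$ to the already established statement for the subsequence $(\widetilde u_{\ell_k})_{k\in\N_0}=(u_k^\inexact)_{k\in\N_0}$, together with a uniform estimate on the intermediate Picard iterates $\widetilde u_{\ell_{k-1}+m}=u_k^m$ for $1\le m<\#\pic(k)$ that is furnished by Lemma~\ref{lemma:eta:picard}. I would first dispose of the lucky breakdown case: if some $\ell_k=\infty$, Proposition~\ref{proposition:repeat} yields $u^\exact=u_k^\exact$ and $\eta_k(u_k^n)\to 0$, whence~\eqref{eq:estimate:discrete} gives $u_k^m\to u^\exact$ as $m\to\infty$, making~\eqref{eq1:theorem:new} and the quantitative statements~\eqref{eq':theorem_optimal_rate}--\eqref{eq':theorem_optimal_complexity} trivial for all $\ell\ge\ell_k$. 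From now on I assume $\ell_k<\infty$ for every $k\in\N_0$, so that $\widetilde\TT_\ell=\TT_k$ for $\ell_{k-1}<\ell\le\ell_k$.

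For plain convergence~\eqref{eq1:theorem:new}, Theorem~\ref{theorem:linear_convergence} furnishes $\eta_k(u_k^\inexact)\to 0$ and hence, via Lemma~\ref{lemma:reliability}, $\norm{u^\exact-u_k^\inexact}\HH\to 0$. For the intermediate indices $\ell=\ell_{k-1}+m$, Lemma~\ref{lemma:eta:picard} bounds $\widetilde\eta_\ell(\widetilde u_\ell)=\eta_k(u_k^m)\lesssim q^{m-1}\eta_{k-1}(u_{k-1}^\inexact)$, which vanishes as $\ell\to\infty$; Lemma~\ref{lemma:apriori} combined with the estimate $\norm{u_k^1-u_k^0}\HH\le(\alpha/L)\norm{u^\exact-u_{k-1}^\inexact}\HH$ from Remark~\ref{remark1} controls the norm error uniformly in $m$.

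For the optimal rate~\eqref{eq':theorem_optimal_rate}, the direction ``$\Longleftarrow$'' is the same as Step~1 of the proof of Theorem~\ref{theorem:optimal_rate}. For ``$\Longrightarrow$'', Theorem~\ref{theorem:optimal_rate} delivers the bound at subsequence indices $\ell=\ell_k$ immediately. For intermediate $\ell=\ell_{k-1}+m$ with $1\le m<\#\pic(k)$, I combine Lemma~\ref{lemma:eta:picard} with Theorem~\ref{theorem:optimal_rate} at level $k-1$ to obtain $\widetilde\eta_\ell(\widetilde u_\ell)\lesssim q^{m-1}(\#\TT_{k-1}-\#\TT_0+1)^{-s}$, and then use the upper bound $\#\TT_k\le\Cson\#\TT_{k-1}$ from~\eqref{axiom:split} together with the elementary inequality $\#\TT_{k-1}\le\#\TT_0\,(\#\TT_{k-1}-\#\TT_0+1)$ to replace $\TT_{k-1}$ by $\widetilde\TT_\ell=\TT_k$, at the cost of a factor depending only on $\Cson$, $\#\TT_0$, and $s$.

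Finally, for optimal complexity~\eqref{eq':theorem_optimal_complexity}, the bookkeeping identity $\sum_{j=0}^{\ell_k}\#\widetilde\TT_j=\sum_{i=0}^{k}\#\pic(i)\,\#\TT_i$ combined with Theorem~\ref{theorem:optimal_comp} settles the subsequence case. For intermediate $\ell=\ell_{k-1}+m$, set $A:=\sum_{i=0}^{k-1}\#\pic(i)\,\#\TT_i$ and $B:=m\,\#\TT_k$; then $A\ge\#\TT_{k-1}\ge\#\TT_k/\Cson$ gives $B/A\le m\,\Cson$, so $(A+B)/A\le 1+m\,\Cson$. Combining Lemma~\ref{lemma:eta:picard} with Theorem~\ref{theorem:optimal_comp} at level $k-1$ yields $\widetilde\eta_\ell(\widetilde u_\ell)\lesssim q^{m-1}A^{-(s-\varepsilon)}\le q^{m-1}(1+m\,\Cson)^{s-\varepsilon}(A+B)^{-(s-\varepsilon)}$. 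The main obstacle---namely that $\#\pic(k)$ is not a priori uniformly bounded (cf.\ Remark~\ref{remarkXXX})---is precisely here resolved by the observation that $q^{m-1}(1+m\,\Cson)^{s-\varepsilon}$ is uniformly bounded in $m\in\N$, so that the geometric Picard contraction beats the polynomial growth of the intermediate work contribution, yielding $\tildeCwork=C\,\Cwork$ with $C$ depending only on the declared quantities.
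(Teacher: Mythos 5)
Your plan reproduces the paper's structure: dispose of the lucky-breakdown case, then use the correspondence of Remark~\ref{remark:interpretation} to reduce each claim for the full sequence to the subsequence result, treating the intermediate Picard iterates $\widetilde u_{\ell_{k-1}+m}=u_k^m$ via Lemma~\ref{lemma:eta:picard} and the mesh-size equivalence $\#\TT_{k-1}\simeq\#\TT_k$ from~\eqref{axiom:split}. Two remarks.

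First, a small but genuine omission: you cover $\ell_k=\infty$ only through Proposition~\ref{proposition:repeat}, which requires that the stopping criterion $\norm{\widetilde u_\ell-\widetilde u_{\ell-1}}\HH \le \lambda\,\widetilde\eta_\ell(\widetilde u_\ell)$ fails for all large $\ell$. But $\ell_k=\infty$ can also arise when the criterion is met and $\widetilde\MM_\ell=\emptyset$ (i.e., $\widetilde\eta_\ell(\widetilde u_\ell)=0$); there Proposition~\ref{proposition:repeat} does not apply and one must invoke Proposition~\ref{prop:neww} instead. The paper handles these two sub-cases in its separate Steps~1 and~2; you should too.

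Second, your treatment of~\eqref{eq':theorem_optimal_complexity} is in fact \emph{more} explicit than the paper's, whose Step~6 just says ``the same argument as in Step~5.'' You correctly spotted that the naive transfer fails because $\sum_{j=0}^{\ell}\#\widetilde\TT_j = A+B \ge A$, so $(A+B)^{-(s-\varepsilon)}\le A^{-(s-\varepsilon)}$ runs the wrong way, and that one must instead exploit the geometric decay $q^{m-1}$ furnished by~\eqref{eq:eta:picard} to absorb the polynomial growth $(1+m\,\Cson)^{s-\varepsilon}$ coming from $(A+B)/A$. The uniform bound $\sup_{m\in\N} q^{m-1}(1+m\,\Cson)^{s-\varepsilon}<\infty$ (which can be made $\varepsilon$-independent by replacing the exponent $s-\varepsilon$ with $s$) gives exactly the claimed $\tildeCwork = C\,\Cwork$. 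This fills in a detail that the paper's terse wording leaves implicit, and it also clarifies why it suffices that $\#\pic(k)$ need not be uniformly bounded: the exponential contraction of the Picard iterates compensates for the unbounded intermediate work. Apart from these points, the argument is correct and essentially the same as the paper's.
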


\begin{proof} We stick with the notation of Remark~\ref{remark:interpretation}.

{\bf Step~1.}\quad Suppose that there exists an index $\ell\in\N_0$ with $\norm{\widetilde u_\ell-\widetilde u_{\ell-1}}\HH \le \lambda\,\widetilde\eta_\ell(\widetilde u_\ell)$ and $\widetilde\MM_\ell = \emptyset$. Then, it follows $\widetilde\eta_\ell(\widetilde u_\ell)=0$. Proposition~\ref{prop:neww} concludes $u^\exact = \widetilde u_k$, $\widetilde\MM_k = \emptyset$, and $\widetilde\eta_k(\widetilde u_k)=0$ for all $k\ge\ell$. In particular, this proves~\eqref{eq':lambda3}--\eqref{eq':theorem_optimal_rate} with $\norm{u^\exact}{\A_s}<\infty$ for all $s>0$.

{\bf Step~2.} Suppose that there exists an index $k\in\N_0$ with $\ell_{k-1}<\infty$ and $\ell_k=\infty$. Then, $\widetilde\TT_\ell=\widetilde\TT_{\ell_{k-1}+1}$ for all $\ell>\ell_{k-1}$. According to step~1, we may further assume that $\norm{\widetilde u_\ell-\widetilde u_{\ell-1}}\HH > \lambda\,\widetilde\eta_\ell(\widetilde u_\ell)$ for all $\ell>\ell_{k-1}$. Then, this corresponds to the situation of Proposition~\ref{proposition:repeat} and hence results in convergence $\norm{u^\exact-\widetilde u_\ell}\HH + \widetilde\eta_\ell(\widetilde u_\ell)\to0$ as $\ell\to\infty$ and $\widetilde\eta_\ell(u^\exact)=0$ for all $\ell>\ell_{k-1}$. Again, this proves~\eqref{eq':lambda3}--\eqref{eq':theorem_optimal_rate} with $\norm{u^\exact}{\A_s}<\infty$ for all $s>0$.

{\bf Step~3.} According to step~2, we may suppose $\ell_k<\infty$ for all $k\in\N_0$. In this step, we will prove~\eqref{eq1:theorem:new}.
Given $\ell\in\N$, choose $k\in\N_0$ and $m\in\N_0$ with
$\ell = \ell_{k-1} + m < \ell_{k}<\infty$. Then, $\widetilde u_\ell = \widetilde u_{\ell_{k-1}+m} = u_{k}^m$ and hence
\begin{align*}
 \norm{u^\exact-\widetilde u_\ell}\HH = \norm{u^\exact-u_{k}^m}\HH
 &\le \norm{u^\exact-u_{k}^\inexact}\HH + \norm{u_{k}^\inexact-u_{k}^m}\HH
 \stackrel{\eqref{eq:reliability}}{\le} \Crel^\inexact\,\eta_k(u_{k}^\inexact) + \norm{u_{k}^\inexact-u_{k}^m}\HH.
\end{align*}
Moreover, let $u_{k}^\inexact = u_{k}^n$. Then, we obtain $m<n$ and
\begin{align*}
 \norm{u_{k}^\inexact-u_{k}^m}\HH
 \le \sum_{j=m}^{n-1} \norm{u_{k}^{j+1}-u_{k}^j}\HH
 \reff{eq:important}\le \frac{\alpha}{L}\,\Crel^\inexact\Big(\sum_{j=m}^{n-1}q^j\Big)\,\eta_{k-1}(u_{k-1}^\inexact)
 \le \frac{\alpha}{L}\,\Crel^\inexact\,\frac{q^m}{1-q}\,\eta_{k-1}(u_{k-1}^\inexact).
\end{align*}
Combining these estimates, we derive that
\begin{align*}
 \norm{u^\exact-\widetilde u_\ell}\HH
 \lesssim \eta_{k}(u_{k}^\inexact) + \eta_{k-1}(u_{k-1}^\inexact).
\end{align*}
For $k>0$, Lemma~\ref{lemma:eta:picard} proves that
\begin{align*}
 \widetilde\eta_\ell(\widetilde u_\ell)
 = \eta_{k}(u_{k}^m)
 \reff{eq:eta:picard}\lesssim \eta_{k-1}(u_{k-1}^\inexact). 
\end{align*}
For $k> 0$ and $m=0$, we even have equality 
\begin{align*}
\widetilde\eta_\ell(\widetilde u_\ell)
=\eta_{k-1}(u_{k-1}^\inexact).
\end{align*}
Altogether, Theorem~\ref{theorem:linear_convergence} proves convergence $\norm{u^\exact-\widetilde u_\ell}\HH+\widetilde\eta_\ell(\widetilde u_\ell)\to0$ as $\ell\to\infty$.

{\bf Step~4.}\quad According to step~2, we may suppose $\ell_k<\infty$ for all $k\in\N_0$. 
To show the implication ``$\Longleftarrow$'' of~\eqref{eq':theorem_optimal_rate}, note that the right-hand side of~\eqref{eq':theorem_optimal_rate} implies $\eta_{k}(u_k^\inexact) \lesssim \big( \#\TT_{k} - \# \TT_0 +1  \big)^{-s}$ for all $k\in\N_0$. Hence, the claim follows from Theorem~\ref{theorem:optimal_rate}.

{\bf Step~5.}\quad According to step~2, we may suppose $\ell_k<\infty$ for all $k\in\N_0$.
To show the implication ``$\Longrightarrow$'' of~\eqref{eq':theorem_optimal_rate}, let $\ell = \ell_{k-1}+m < \ell_{k}<\infty$ for some $k\in\N$ and $m\in\N_0$. Then, $\widetilde u_\ell = \widetilde u_{\ell_{k-1}+m} = u_{k}^m$ and $\widetilde\TT_\ell=\TT_{k}$. Moreover, elementary calculation (see, e.g.,~\cite[Lemma~22]{helmholtz}) proves that
\begin{align*}
 (\# \TT_0)^{-1} \# \widetilde \TT_\ell \leq \#\widetilde\TT_\ell - \#\TT_0 +1 \leq  \#\widetilde\TT_\ell
 \quad\text{for all }\ell\in\N_0.
\end{align*}
This and $\#\TT_{k-1} \simeq \#\TT_{k}$ (which follows from~\eqref{axiom:split} and $\TT_k = \refine(\TT_{{k-1}},\MM_{{k-1}})$) proves 
\begin{align*}
 \eta_{k-1}(u_{k-1}^\inexact)
 \reff{eq:theorem_optimal_rate}\lesssim (\#\TT_{k-1} - \#\TT_0 +1)^{-s}
 &\simeq (\#\TT_k - \#\TT_0 +1)^{-s}
 = (\#\widetilde\TT_{\ell} - \#\TT_0 +1)^{-s}.
\end{align*}
For $m>0$, estimate~\eqref{eq:eta:picard} proves
$\widetilde\eta_\ell(\widetilde u_\ell) =\eta_{k}(u_{k}^m)
\lesssim \eta_{k-1}(u_{k-1}^\inexact)$.
 For $m=0$, it holds $\widetilde\eta_\ell(\widetilde u_\ell)=\eta_{k-1}(u_{k-1}^\inexact)$.
Combining this with the latter estimate, we conclude the proof.

{\bf Step~6.}\quad The same argument as in step~{\rm 5} in combination with Theorem~\ref{theorem:optimal_comp} proves \eqref{eq':theorem_optimal_complexity}.

\end{proof}

\section{Numerical Experiments}
\label{section:numerics}

In this section, we present two numerical experiments in 2D to underpin our theoretical findings. 
In the experiments, we compare the performance of Algorithm~\ref{algorithm} for 
\begin{itemize}
	\item different values of $\lambda \in \{1, 0.1, 0.01,\ldots,10^{-6}\}$,
	\item different values of $\theta \in \{0.2, 0.4,\ldots,1\}$,
	\item nested iteration $u_\ell^0 := u_{\ell-1}^\inexact$ compared to a naive initial guess $u_\ell^0 := 0$.
\end{itemize}
As model problems serve nonlinear boundary value problems similar to those of~\cite{gmz11,gmz,multiscale,cw2015}.

\subsection{Model problem}
Let $\Omega \subset \R^d$   be a bounded Lipschitz domain with polyhedral boundary $\Gamma = \partial \Omega$, $d \in \{2,3\}$. 
Suppose that $\Gamma := \overline \Gamma_D \cup \overline \Gamma_N$ is split into relatively open and disjoint 
Dirichlet and Neumann boundaries $\Gamma_D,\Gamma_N \subseteq \Gamma$ with $|\Gamma_D| >0$.
For given $f \in L^2(\Omega)$, we consider problems of the following type:
\begin{eqnarray}\label{eq:example}
\begin{split}
- \diver( \GG(x,|\nabla u^\exact(x)|^2) \nabla u^\exact(x) ) &= f(x) \quad &\textrm{in }& \Omega, \\
u^\exact(x) &= 0  &\textrm{on }& \Gamma_D, \\
\mu(x,|\nabla u^\exact(x)|^2) \partial_{\bf n} u^\exact(x) &= g(x)  &\textrm{on }& \Gamma_N.
\end{split}
\end{eqnarray}
We suppose that the scalar nonlinearity $\GG: \Omega \times \R_{\geq 0} \rightarrow \R$ satisfies the following properties~\eqref{item:G_bounded}--\eqref{item:G_lipschitz1}, similarly considered in~\cite{gmz}.

\begin{enumerate}
 \renewcommand{\theenumi}{M\arabic{enumi}}
 \bf
 \item \label{item:G_bounded}
 \rm
 There exist constants $0<\gamma_1<\gamma_2<\infty$ such that
 \begin{align}\label{eq:G_bounded}
 \gamma_1 \le \GG(x,t) \le \gamma_2 \quad \textrm{for all } x \in \Omega \text{ and all } t \geq 0.
 \end{align}
 \bf
 \item \label{item:G_differentiable}
 \rm
There holds $\GG(x,\cdot) \in C^1(\R_{\geq 0} ,\R)$ for all $x \in \Omega$, and there exist $0 < \widetilde{\gamma}_1<\widetilde{\gamma}_2<\infty$ such that
 \begin{align}\label{eq:G_differentiable}
 \widetilde{\gamma}_1 \le \GG(x,t) +2 t  \frac{\d{}}{\d{t}} \GG(x,t) \le \widetilde{\gamma}_2 \quad \textrm{for all } x \in \Omega \text{ and all } t \geq 0.
 \end{align}
  \bf
 \item \label{item:G_lipschitz}
 \rm
 Lipschitz-continuity of $\GG(x,t)$ in $x$, i.e., there exists $L_{\GG}>0$ such that
 \begin{align}\label{eq:G_lipschitz}
 | \GG(x,t) - \GG(y,t) | \le L_{\GG} | x - y | \quad \textrm{for all } x,y \in \Omega \text{ and all } t \geq 0.
 \end{align}
 \bf
 \item \label{item:G_lipschitz1}
 \rm
Lipschitz-continuity of $t \frac{\d{}}{\d{t}} \GG(x,t)$ in $x$, i.e., there exists $\widetilde{L}_{\GG}>0$ such that
 \begin{align}\label{eq:G_lipschitz1}
 | t \frac{\d{}}{\d{t}} \GG(x,t) - t \frac{\d{}}{\d{t}} \GG(y,t) | \le \widetilde{L}_{\GG} | x - y | \quad \textrm{for all } x,y \in \Omega
 \text{ and all } t \geq 0.
 \end{align}
\end{enumerate}

\subsection{Weak formulation}
The weak formulation of~\eqref{eq:example} reads as follows : Find $u \in H_D^{1}(\Omega):= \{w \in H^1: \, w=0 \text{ on } \Gamma_D \, \text{in the sense of traces}  \}$ such that
\begin{align}\label{eq:weak_example}
\IntSet{\Omega} \GG(x,|\nabla  u^\exact(x)|^2) \,\nabla u^\exact\cdot \nabla v \d{x} = \IntSet{\Omega} f v \d{x} 
+ \IntSet{\Gamma_N} g v \d{s} \quad \textrm{for all } v \in 
H^1_D(\Omega)
\end{align}
With respect to the abstract framework, it holds $\HH := H^{1}_D(\Omega)$ 
with $\| v \|_{\HH} := \norm{\nabla v}{L^2(\Omega)}$.
With $\dual{\cdot}{\cdot}$ being the extended $L^2(\Omega)$ scalar product,
we obtain~\eqref{eq':strongform} with operators 
\begin{subequations}
\begin{align}
\dual{Aw}{v} &:= \IntSet{\Omega} \GG(x,|\nabla w(x)|^2) \, \nabla w(x)\cdot\nabla v(x) \d{x},  \label{eq:AG} \\
F(v) &:= \IntSet{\Omega} f v \d{x} +  \IntSet{\Omega} g v \d{s}. \label{eq:F}
\end{align}
\end{subequations}

Next, we show that $A$ satisfies \eqref{axiom:monotone}--\eqref{axiom:potential}. For the sake of completeness, we first recall an auxiliary lemma, which is just a simplified version of \cite[Lemma 2.1]{lb96}.

\begin{lemma}\label{lemma:kappafun}
Let $C_1>0$ as well as $0 < C_2 \le C_3 < \infty$ and $\kkappa(x,\cdot) \in C^1(\R_{\ge0},\R_{\ge0})$ with $\kappa(x,t) \leq C_1$ for all $x \in \Omega$ and $t \geq 0$ satisfy
\begin{align}
C_2 \le  \frac{\d{}}{\d{t}} (t \kkappa(x,t) ) \le \frac{\d{}}{\d{t}} (t \kkappa(x,t) )  \le C_3
\quad \textrm{for all } x \in \Omega \text{ and all } t \geq 0.
\end{align}
Then, it holds that
\begin{align}\label{eq:coercive}
\big(\kkappa(x,|y|) y - \kkappa(x,|z|) z \big) \cdot \big( y- z  \big) \ge C_1 | y - z|^2
\quad \textrm{for all } x \in \Omega \text{ and } y,z \in \R^d,
\end{align}
as well as 
\begin{align}\label{eq:continuous}
\big| \kkappa(x,|y|) y - \kkappa(x,|z|) z \big| \le C_2 | y - z |
\quad \textrm{for all } x \in \Omega \text{ and } y,z\in \R^d.
\qquad\qed
\end{align}
\end{lemma}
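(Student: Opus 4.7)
The plan is to reduce everything to a pointwise bound on the Jacobian of the vector field $\Psi_x(w) := \kkappa(x,|w|)\,w$, and then to integrate along the straight segment between $z$ and $y$. First, I would show that $\Psi_x$ is continuously differentiable on $\R^d$ and compute, for $w\neq 0$,
\begin{align*}
 D_w\Psi_x(w) \;=\; \kkappa(x,|w|)\,I_d \;+\; \partial_t\kkappa(x,|w|)\,\frac{w\,w^T}{|w|},
\end{align*}
the $1/|w|$ factor being harmless since it is absorbed by the tensor $w\,w^T$, so that $D_w\Psi_x$ extends continuously to $w=0$ as $\kkappa(x,0)\,I_d$.

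Next, for any test direction $v\in\R^d$, I would split $v = a\,\hat w + v_\perp$ with $\hat w := w/|w|$, $a := \hat w\cdot v$, and $v_\perp\perp\hat w$, so that $|v|^2 = a^2+|v_\perp|^2$. A direct calculation using the identity $\partial_t\bigl(t\,\kkappa(x,t)\bigr) = \kkappa(x,t)+t\,\partial_t\kkappa(x,t)$ then yields the clean decomposition
\begin{align*}
 D_w\Psi_x(w)\,v \;=\; a\,h'(x,|w|)\,\hat w \;+\; \kkappa(x,|w|)\,v_\perp,
 \qquad h(x,t):= t\,\kkappa(x,t).
\end{align*}
Consequently, the quadratic form and the squared norm split along the orthogonal decomposition as
\begin{align*}
 v\cdot D_w\Psi_x(w)\,v \;=\; h'(x,|w|)\,a^2 + \kkappa(x,|w|)\,|v_\perp|^2,
 \quad
 |D_w\Psi_x(w)\,v|^2 \;=\; h'(x,|w|)^2\,a^2 + \kkappa(x,|w|)^2\,|v_\perp|^2.
\end{align*}

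At this point both claims become a matter of plugging in the hypotheses. For the coercivity bound, I would first observe that $h'(x,\cdot)\ge C_2$ together with $h(x,0)=0$ forces $\kkappa(x,t)\ge C_2$ for all $t\ge 0$, a minor but crucial intermediate fact. Combined with $h'\ge C_2$, this gives $v\cdot D_w\Psi_x(w)\,v \ge C_2\,|v|^2$. For the Lipschitz bound, the upper bounds $h'\le C_3$ and $\kkappa\le C_1$ (which also forces $\kkappa\le C_3$ by the same integration argument) give $|D_w\Psi_x(w)\,v|\le \max\{C_1,C_3\}\,|v|$.

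Finally, I would apply the fundamental theorem of calculus along $\gamma(s):= z+s(y-z)$, $s\in[0,1]$, to write
\begin{align*}
 \Psi_x(y)-\Psi_x(z) \;=\; \int_0^1 D_w\Psi_x(\gamma(s))(y-z)\,\d{s},
\end{align*}
and take the scalar product with $(y-z)$ (for \eqref{eq:coercive}) or the Euclidean norm (for \eqref{eq:continuous}). The pointwise estimates above then integrate to the desired bounds $C_2\,|y-z|^2$ and $\max\{C_1,C_3\}\,|y-z|$ respectively, matching the lemma up to identification of the constants. The main (and really only) technical obstacle is the apparent singularity of the Jacobian formula at $w=0$: one must justify that the segment may pass through the origin without invalidating the representation, which is handled either by noting that $\gamma$ hits $0$ on a set of measure zero or, more cleanly, by working with the continuous extension of $D_w\Psi_x$ sketched above.
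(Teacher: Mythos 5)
Your proof is correct and gives a self-contained argument where the paper itself omits the proof, citing \cite[Lemma~2.1]{lb96} instead (hence the $\qed$ in the statement). The Jacobian-along-a-segment approach you take is the standard one for such radial vector fields: the orthogonal splitting of $D_w\Psi_x(w)$ into the radial part governed by $h'(x,t)=\frac{\d{}}{\d{t}}\bigl(t\kkappa(x,t)\bigr)$ and the tangential part governed by $\kkappa(x,t)$ is exact, and the intermediate observation that $C_2\le\kkappa(x,t)\le C_3$ follows from $h(x,0)=0$ together with $C_2\le h'\le C_3$ is precisely the right reduction. Two remarks. First, you correctly flag that the constants in the lemma as printed are scrambled: the coercivity constant should be $C_2$ (the lower bound on $h'$) and the Lipschitz constant should be $C_3$ (the upper bound on $h'$), which is exactly how the lemma is later used in the proof of Proposition~\ref{prop:operatorproperties} with $\alpha=\widetilde{\gamma}_1$ and $L=\widetilde{\gamma}_2$. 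Second, once $\kkappa\le C_3$ is in hand, your Lipschitz bound $\max\{C_1,C_3\}$ tightens to $C_3$ by the Pythagorean identity $|D_w\Psi_x(w)v|^2=h'(x,|w|)^2a^2+\kkappa(x,|w|)^2|v_\perp|^2\le C_3^2|v|^2$; this matches the paper's usage and shows that the separate boundedness hypothesis $\kkappa\le C_1$ is in fact superfluous for the two stated conclusions.
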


\begin{proposition}\label{prop:operatorproperties}
Suppose that $\GG: \Omega \times \R_{\geq 0} \rightarrow \R$ satisfies \eqref{item:G_bounded}--\eqref{item:G_differentiable}. Then, the corresponding operator $A$ satisfies \eqref{axiom:monotone}--\eqref{axiom:potential} with constants $\alpha := \widetilde{\gamma}_1$ and 
$L :=\widetilde{\gamma}_2$. 
\end{proposition}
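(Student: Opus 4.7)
The plan is to reduce the three properties \eqref{axiom:monotone}--\eqref{axiom:potential} to pointwise estimates on the scalar nonlinearity by setting $\kappa(x,t) := \mu(x,t^2)$ and invoking Lemma~\ref{lemma:kappafun}. First, I would verify the hypotheses of that lemma. Boundedness $\kappa(x,t) \le \gamma_2$ follows from~\eqref{item:G_bounded}. A direct computation with the chain rule gives
\begin{align*}
 \frac{d}{dt}\bigl(t\,\kappa(x,t)\bigr)
 = \frac{d}{dt}\bigl(t\,\mu(x,t^2)\bigr)
 = \mu(x,t^2) + 2t^2\,\tfrac{d}{ds}\mu(x,s)\big|_{s=t^2},
\end{align*}
so that, after substituting $s = t^2$, assumption~\eqref{item:G_differentiable} yields $\widetilde\gamma_1 \le \tfrac{d}{dt}(t\,\kappa(x,t)) \le \widetilde\gamma_2$ uniformly in $x\in\Omega$ and $t\ge0$.

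Second, I would apply Lemma~\ref{lemma:kappafun} pointwise with $y := \nabla w(x)$ and $z := \nabla v(x)$ to obtain
\begin{align*}
 \bigl(\mu(x,|\nabla w|^2)\nabla w - \mu(x,|\nabla v|^2)\nabla v\bigr)\cdot(\nabla w - \nabla v) &\ge \widetilde\gamma_1\,|\nabla w - \nabla v|^2,\\
 \bigl|\mu(x,|\nabla w|^2)\nabla w - \mu(x,|\nabla v|^2)\nabla v\bigr| &\le \widetilde\gamma_2\,|\nabla w - \nabla v|.
\end{align*}
Integrating the first inequality over $\Omega$ and recalling $\|\cdot\|_\HH = \|\nabla\cdot\|_{L^2(\Omega)}$ gives~\eqref{axiom:monotone} with $\alpha := \widetilde\gamma_1$. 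The second inequality together with the Cauchy--Schwarz inequality yields, for arbitrary $\varphi\in\HH$,
\begin{align*}
 |\dual{Aw-Av}{\varphi}| \le \widetilde\gamma_2 \int_\Omega |\nabla w-\nabla v|\,|\nabla\varphi|\,dx \le \widetilde\gamma_2\,\|w-v\|_\HH\,\|\varphi\|_\HH,
\end{align*}
and taking the supremum over $\varphi$ proves~\eqref{axiom:lipschitz} with $L := \widetilde\gamma_2$.

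Third, for~\eqref{axiom:potential} I would introduce the primitive $\Psi(x,t) := \tfrac12\int_0^t \mu(x,s)\,ds$ (well-defined and continuous in $t$ by~\eqref{item:G_bounded}) and set
\begin{align*}
 P(v) := \int_\Omega \Psi\bigl(x,|\nabla v(x)|^2\bigr)\,dx.
\end{align*}
To compute the G\^ateaux derivative at $w\in\HH$ in direction $v\in\HH$, I would differentiate the integrand with respect to $r$ using $\partial_t\Psi(x,t) = \tfrac12\mu(x,t)$ and the chain rule, obtaining the pointwise limit $\mu(x,|\nabla w|^2)\,\nabla w\cdot\nabla v$. The interchange of limit and integral is justified by the dominated convergence theorem: the difference quotient is bounded by $\gamma_2\,(|\nabla w|+|\nabla v|)\,|\nabla v|\in L^1(\Omega)$ thanks to~\eqref{item:G_bounded} and the Cauchy--Schwarz inequality. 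This gives $\dual{dP(w)}{v}=\dual{Aw}{v}$ and hence~\eqref{axiom:potential}.

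The only minor obstacle is that the paper's formulation of Lemma~\ref{lemma:kappafun} tethers the coercivity constant to a bound on $\kappa$ rather than to the lower bound on $\tfrac{d}{dt}(t\kappa)$; reading the lemma in the standard way (so that the constants $\widetilde\gamma_1,\widetilde\gamma_2$ from~\eqref{item:G_differentiable} correctly appear as the coercivity and Lipschitz constants), everything else is a direct computation. Assumptions~\eqref{item:G_lipschitz}--\eqref{item:G_lipschitz1} are not needed here but will enter later for the estimator properties.
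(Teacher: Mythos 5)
Your proposal follows the same two-step route as the paper's proof: setting $\kappa(x,t):=\mu(x,t^2)$, verifying the hypotheses of Lemma~\ref{lemma:kappafun}, integrating the pointwise estimates to obtain \eqref{axiom:monotone}--\eqref{axiom:lipschitz}, and then exhibiting the potential $P(v)=\frac12\int_\Omega\int_0^{|\nabla v|^2}\mu(x,\zeta)\,d\zeta\,dx$ and differentiating it. Your explicit dominated-convergence justification for interchanging the limit and the integral is a small improvement over the paper's bare appeal to the Leibniz rule, and you are right that the statement of Lemma~\ref{lemma:kappafun} as printed contains a typo (the constants $C_1,C_2$ in the conclusions should be the lower and upper bounds on $\frac{d}{dt}(t\kappa)$, i.e.\ the roles played by $\widetilde\gamma_1,\widetilde\gamma_2$), which your reading correctly repairs.
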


\begin{proof}
 We prove the assertion in two steps.

{\bf Step~1.}\quad To show~\eqref{axiom:monotone}--\eqref{axiom:lipschitz}, let $\kkappa(x,t) := \GG(x,t^2)$.
Assumptions~\eqref{item:G_bounded}--\eqref{item:G_differentiable} and $\frac{\d{}}{\d{t}}(t \kkappa(x,t)) = \GG(x,t^2) + 2t^2 \partial_2 \GG(x,t^2)$ allow to apply Lemma~\ref{lemma:kappafun}. 
For all $v,w \in H^1_D(\Omega)$, we obtain 
\begin{align}\label{eq:example_prop_step}
\begin{split}
\alpha | \nabla v - \nabla w |^2 
\stackrel{\eqref{eq:coercive}}{\le}& \big( \GG(\cdot,|\nabla v|^2) \nabla v - \GG(\cdot,|\nabla w|^2 \big) \nabla w) \cdot \big(\nabla v - \nabla w \big)\quad\text{a.e. in }\Omega,
\end{split} 
\end{align}
as well as
\begin{align}\label{eq:example_prop_step2}
\big| \GG( \cdot ,|\nabla v|^2) \nabla v - \GG(\cdot,|\nabla w|^2) \nabla w) \big|^2 \stackrel{\eqref{eq:continuous}}{\le}
 L^2 \big| \nabla v - \nabla w \big|^2\quad\text{a.e. in }\Omega.
\end{align}
Integration over $\Omega$ proves strong monotonicity~\eqref{axiom:monotone} and Lipschitz continuity~\eqref{axiom:lipschitz}.

{\bf Step~2.}\quad We next show~\eqref{axiom:potential}.
Analogously to \cite{Hasanov2010}, we define
\begin{align}\label{eq:defpot}
P: H^{1}_D(\Omega) \rightarrow \R_{\geq 0} : \quad w \mapsto \frac12 \IntSet{\Omega} \Int{0}{|\nabla w|^2} \GG(x,\zzeta) \d{\zzeta} \d{x}.
\end{align}
Note that boundedness~\eqref{item:G_bounded} implies well posedness of $P$. Next, we show that $A$ is the Gateaux-derivative $dP$ of $P$. To that end, let $r > 0$ and $v,w \in H^{1}_D(\Omega)$. Define
\begin{align*}
H(r) := P(w + rv) \stackrel{\eqref{eq:defpot}}{=} \frac12 \IntSet{\Omega} \Int{0}{|\nabla w + r\nabla v|^2} \GG(x,\zzeta) \d{\zzeta} \d{x}.
\end{align*}
With the Leibniz rule, we get
\begin{align*}
H^{\prime}(r) &= \frac12 \IntSet{\Omega} \GG(x,| \nabla w+r \nabla v|^2) \frac{\d{}}{\d r} \big( |\nabla w + r \nabla v|^2 \big) \d{x} \\
&= \IntSet{\Omega} \GG(x,| \nabla w+r \nabla v|^2)\, (\nabla w + r \nabla v)\cdot \nabla v \d{x}.
\end{align*}
This concludes $\dual{dP(w)}{v} = H^{\prime}(0) = \IntSet{\Omega} \GG(x,|\nabla w|^2) \, \nabla w \cdot \nabla v \d{x} 
\stackrel{\eqref{eq:AG}}{=} \dual{Aw}{v}$.
\end{proof}

\subsection{Discretization and \textsl{a~posteriori} error estimator} \label{example:estimator}
Let $\TT_\plus$ be a conforming triangulation of $\Omega$. For $T \in \TT_\plus$, define $h_T := |T|^{1/d} \simeq \diam(T)$.
Consider
\begin{align}\label{eq:dpr:XX}
\XX_{\plus} := \{ v: \Omega \to \R: \, v|_{T} \in \mathcal{P}^1(T) \textrm{ for all } T \in \TT_{\plus} \} \,\cap \, H^1_D(\Omega).
\end{align}
For ease of notation, set $\GG_v(x):= \GG(x,|\nabla v(x)|^2)$. 
Let $[\, \cdot \,] \big|_{\partial T \cap \Omega}$ denote the jump of discrete functions across the element interfaces.
As in \cite[Section 3.2]{gmz}, we define for all  $T \in \TT_{\plus}$ and all $v_\plus \in \XX_{\plus}$, the corresponding residual refinement indicators
\begin{align}\label{eq:dpr:eta} 
\begin{split}\eta_{\plus}(T,v_{\plus})^2 &:= h_T^2 \|f+ \diver( \GG_{v_{\plus}} \nabla v_{\plus} ) \|_{L^2(T)}^2 + 
h_T \| [ \GG_{v_{\plus}} \partial_{\rm n} v_{\plus} ] \|_{L^2(\partial T\cap\Omega)}^2 
\\&\quad
+ h_T \norm{g - \GG_{v_\plus} \partial_{\rm n} v_\plus}{L^2(\partial T \cap \Gamma_N)}^2.
\end{split}
\end{align}

The well-posedness of the error estimator requires that the nonlinearity $\mu(x,t)$ is Lipschitz continuous in $x$, i.e.~\eqref{item:G_lipschitz}. Then, reliability~\eqref{axiom:reliability} and discrete reliability~\eqref{axiom:discrete_reliability} are proved as in the linear case; see, e.g.,~\cite{ckns} for the linear case or \cite[Theorem~3.3]{gmz} and \cite[Theorem 3.4]{gmz}, respectively, for strongly monotone nonlinearities.

The verification of stability~\eqref{axiom:stability} and reduction~\eqref{axiom:reduction} requires the validity of a certain inverse estimate. For scalar nonlinearities and under the assumptions~\eqref{item:G_bounded}--\eqref{item:G_lipschitz1}, the latter is proved in~{\cite[Lemma 3.7]{gmz}}. Using this inverse estimate, the proof of~\eqref{axiom:stability} and~\eqref{axiom:reduction} 
follows as for the linear case; see, e.g.,~\cite{ckns} for the linear case or \cite[Section~3.3]{gmz} for scalar nonlinearities. We note that the necessary inverse estimate is, in particular, open for non-scalar nonlinearities. In any case, the arising constants in~\eqref{axiom:stability}--\eqref{axiom:discrete_reliability} depend also on the uniform shape regularity of the triangulations generated by newest vertex bisection.

\begin{figure}
  \centering
  \includegraphics[width=0.5\textwidth]{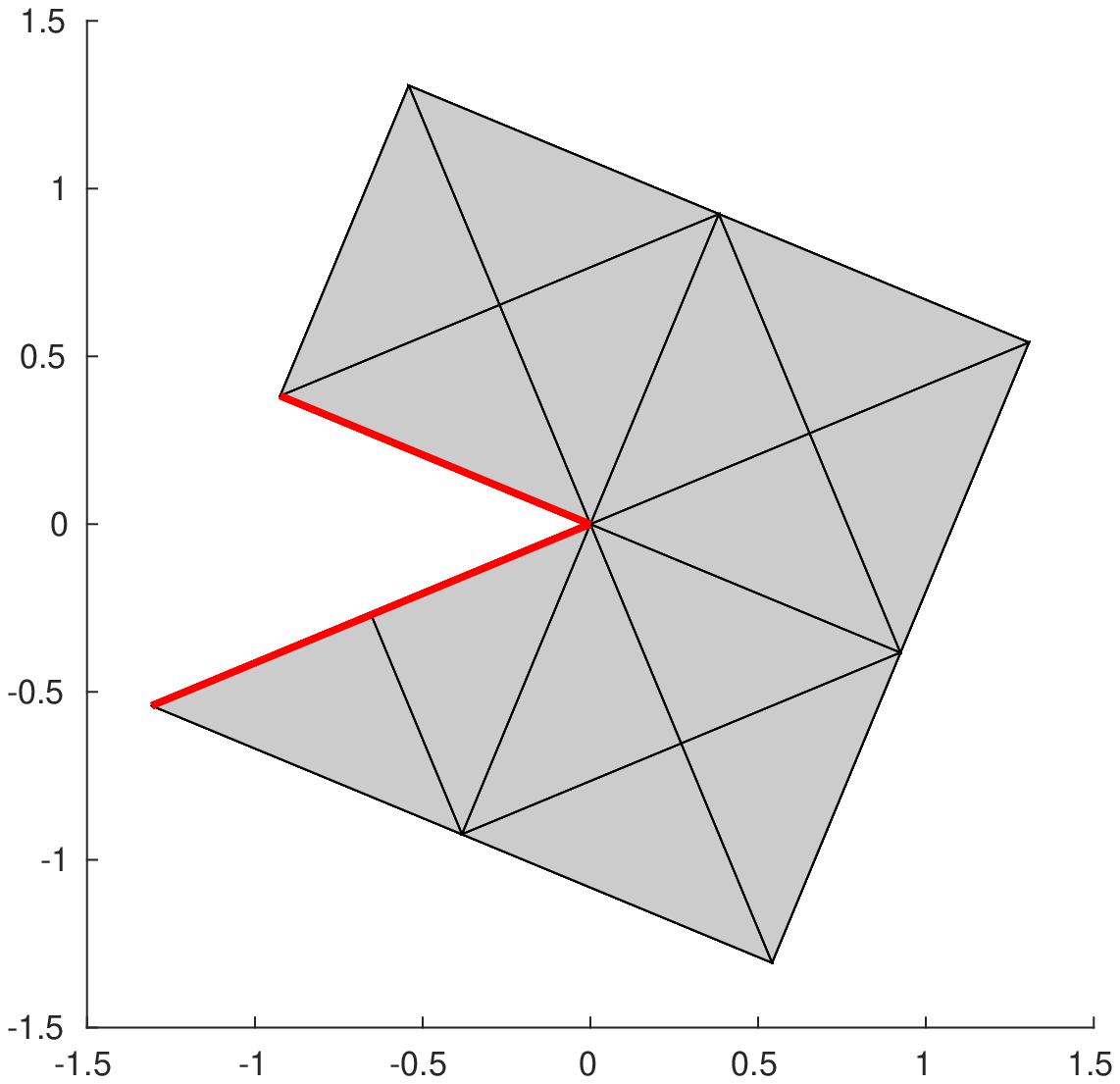}%
  \includegraphics[width=0.5\textwidth]{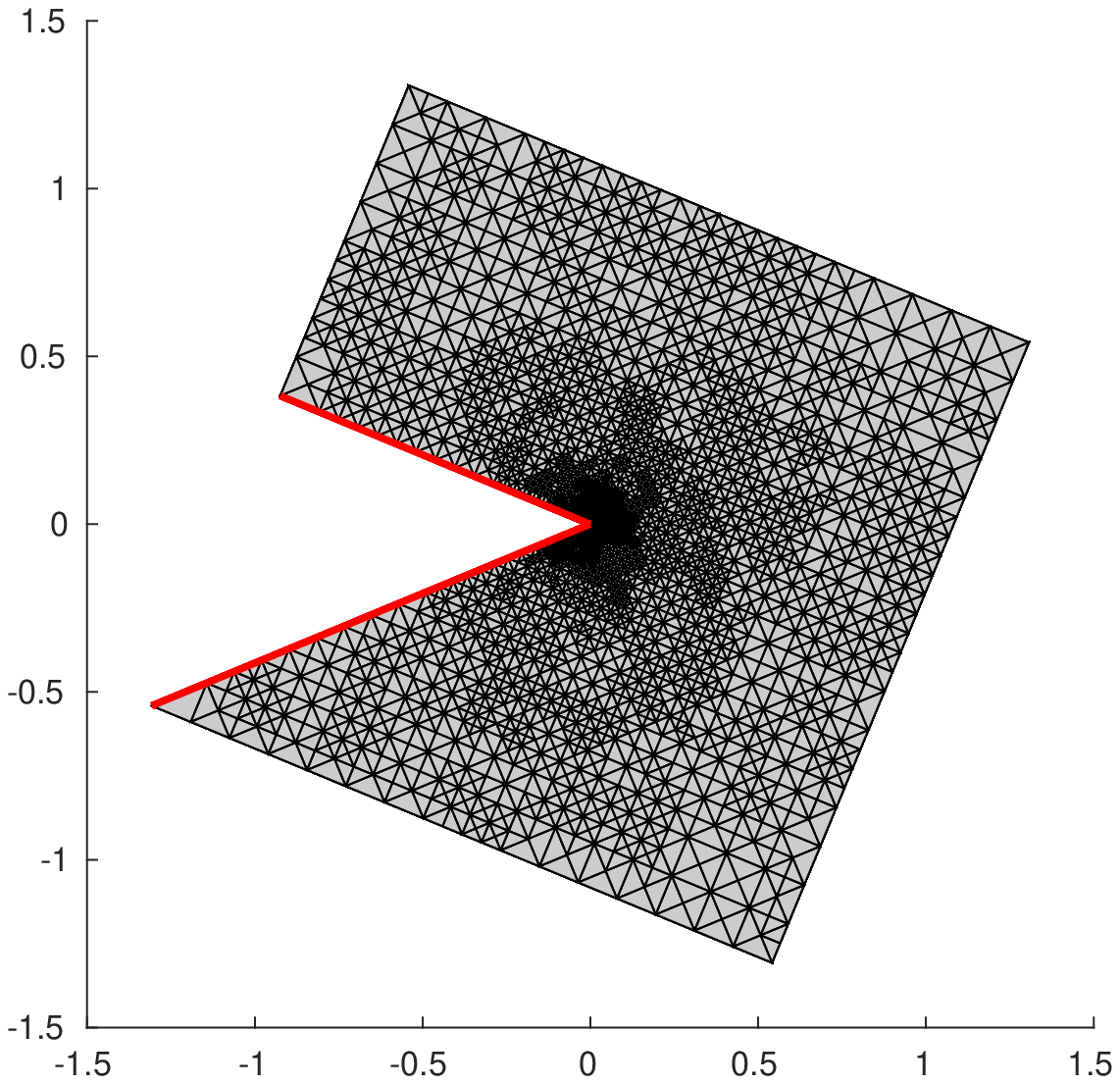}
  \caption{Experiment with known solution from Section~\ref{section:example1}: $Z$-shaped domain $\Omega \subset \R^2$ and the initial mesh $\TT_0$ (left) and with NVB adaptively generated mesh $\TT_{19}$ with $5854$ elements (right). $\Gamma_D$ is visualized in red. 
   \vspace*{2mm}}
  \label{fig:ex1:mesh}
\end{figure}
\begin{figure}
	\centering
	\psfrag{nE}[c][c]{\tiny number of elements $N$}
	\psfrag{estimator}[c][c]{\tiny error resp.\ estimator}
	\psfrag{theta =0.2}{\tiny $\theta = 0.2$}
	\psfrag{theta =0.4}{\tiny $\theta = 0.4$}
	\psfrag{theta =0.6}{\tiny $\theta = 0.6$}
	\psfrag{theta =0.8}{\tiny $\theta = 0.8$}
	\psfrag{theta =1}{\tiny $\theta = 1.0$ (uniform)}	
	\psfrag{O12}[c][r]{\tiny $\OO(N^{-1/2})$}
	\psfrag{Oalpha}[c][c]{\tiny $\OO(N^{-\beta/2})$}
	\includegraphics[width=0.48\textwidth]{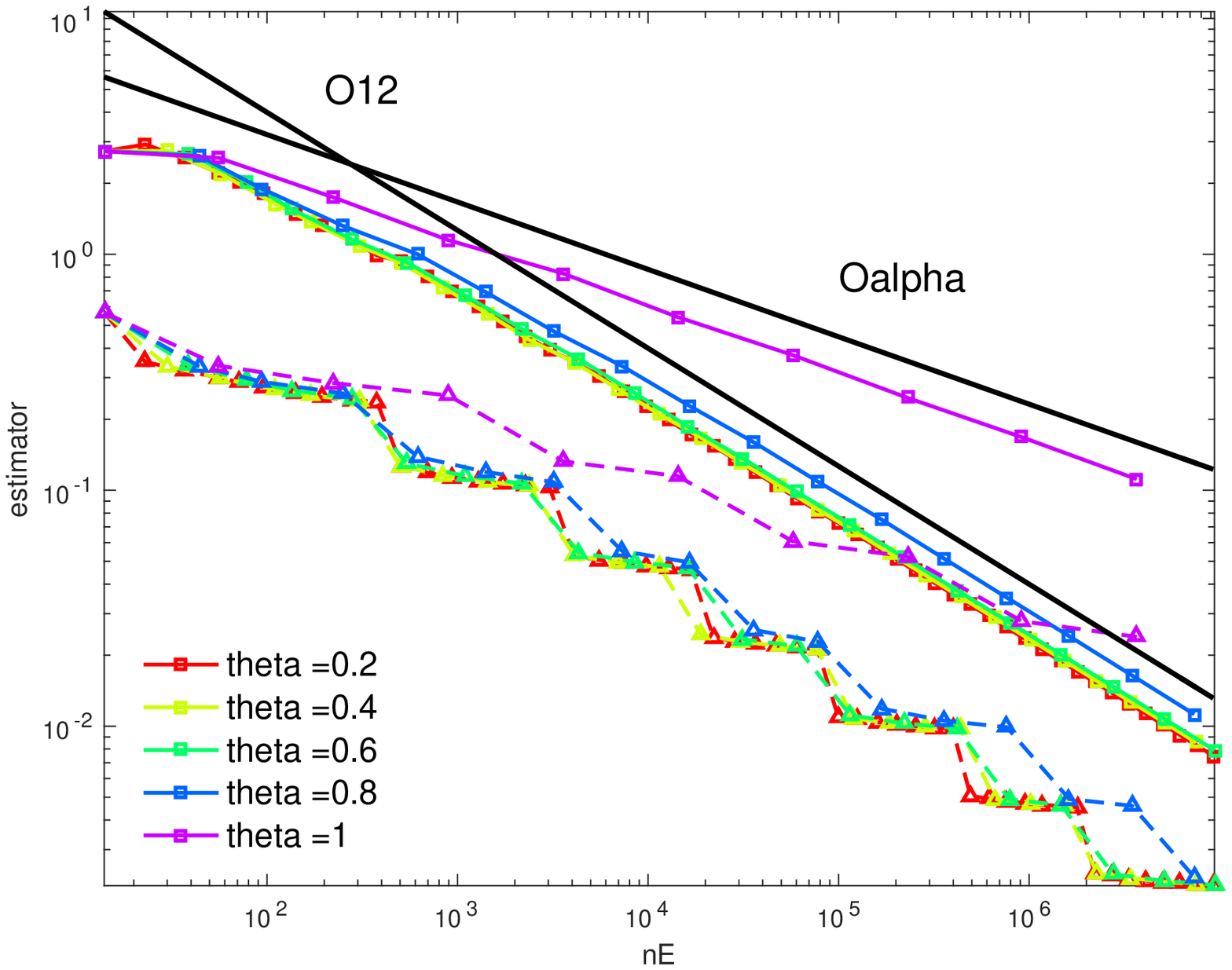}
	\hfill
	\includegraphics[width=0.48\textwidth]{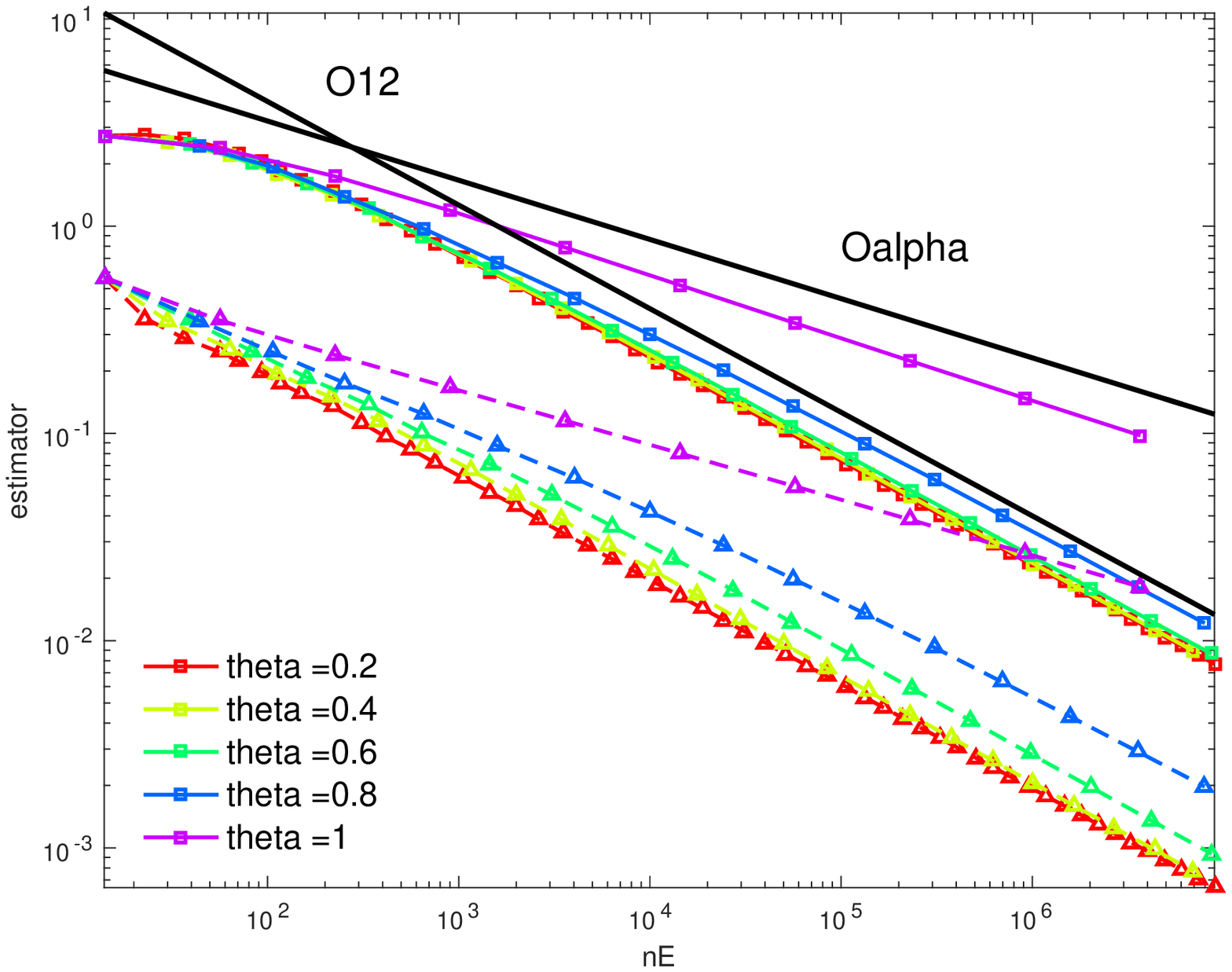}		
	\caption{Experiment with known solution from Section~\ref{section:example1}: Convergence of $\eta_\ell(u_\ell^\inexact)$ (solid lines) and $\norm{\nabla u^\exact - \nabla u^\inexact_\ell}{L^2(\Omega)}$ (dashed lines) 
		for $\lambda = 0.1$ and different values of $\theta\in\{0.2,\dots, 1\}$, where we compare naive initial guesses $u_\ell^0 := 0$ (left) and nested iteration $u_\ell^0 := u_{\ell-1}^\inexact$ (right).\vspace*{1mm}}
	\label{fig:ex1:compare_theta_1}	  
\end{figure}
\begin{figure}
	\centering
	\psfrag{nE}[c][c]{\tiny number of elements $N$}
	\psfrag{estimator}[c][c]{\tiny error resp.\ estimator}
	\psfrag{theta =0.2}{\tiny $\theta = 0.2$}
	\psfrag{theta =0.4}{\tiny $\theta = 0.4$}
	\psfrag{theta =0.6}{\tiny $\theta = 0.6$}
	\psfrag{theta =0.8}{\tiny $\theta = 0.8$}
	\psfrag{theta =1}{\tiny $\theta = 1.0$ (uniform)}	
	\psfrag{O12}[c][r]{\tiny $\OO(N^{-1/2})$}
	\psfrag{Oalpha}[c][c]{\tiny $\OO(N^{-\beta/2})$}		
	\includegraphics[width=0.48\textwidth]{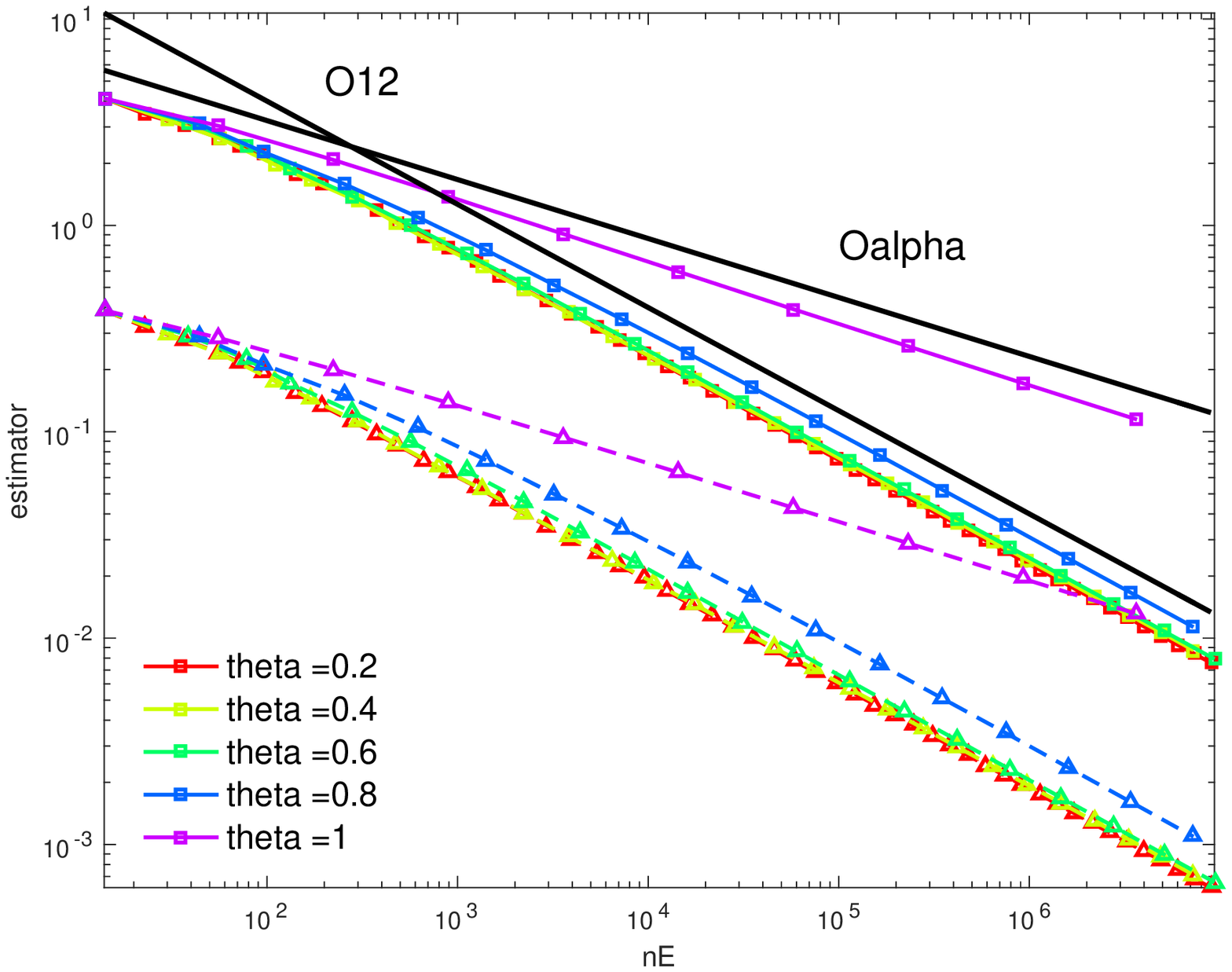}
	\hfill
	\includegraphics[width=0.48\textwidth]{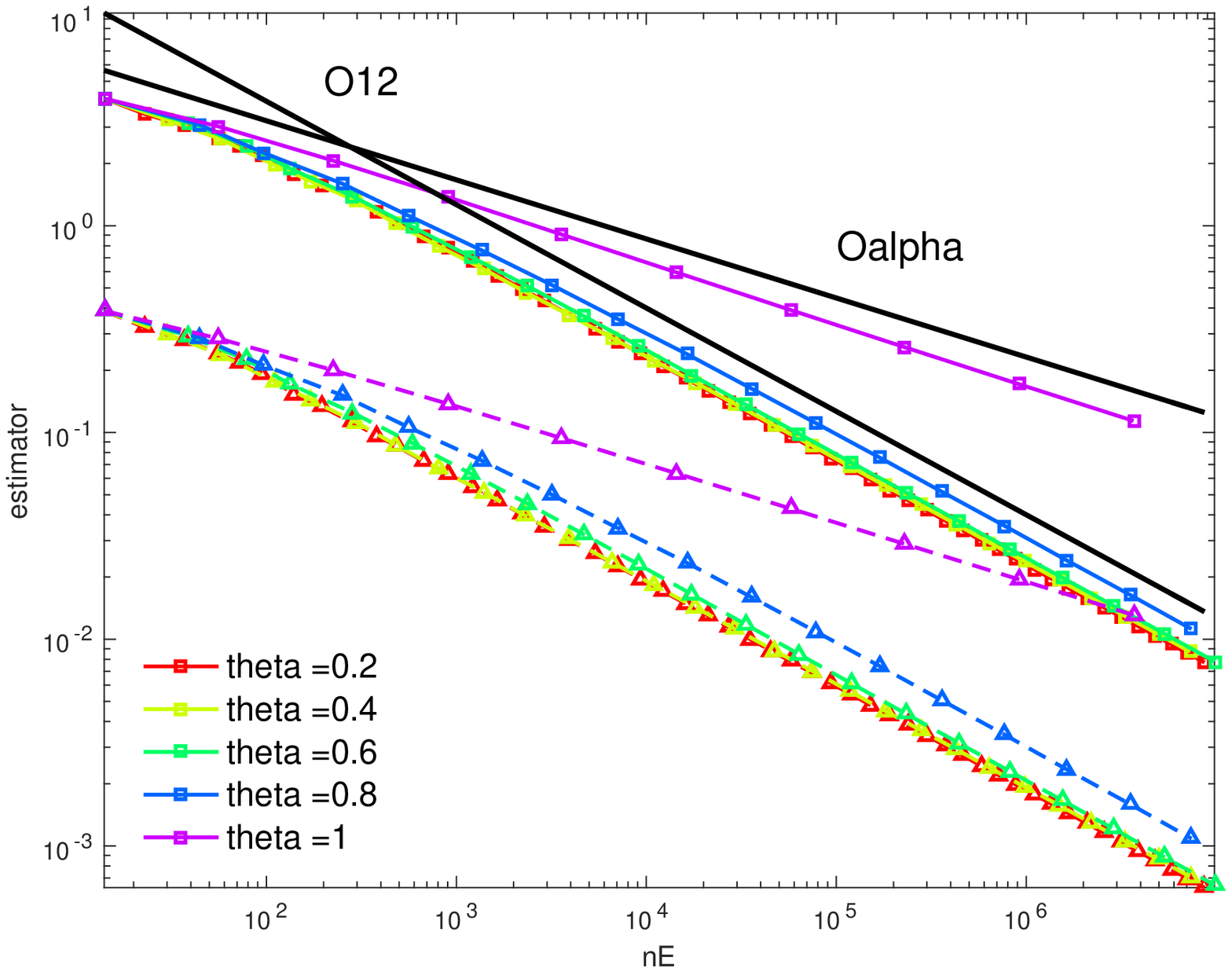}	
	\caption{Experiment with known solution from Section~\ref{section:example1}: Convergence of $\eta_\ell(u_\ell^\inexact)$ (solid lines) and $\norm{\nabla u^\exact - \nabla u^\inexact_\ell}{L^2(\Omega)}$ (dashed lines)
		for $\lambda = 10^{-5}$ and different values of $\theta\in\{0.2,\dots, 1 \}$, where we compare naive initial guesses $u_\ell^0 := 0$ (left) and nested iteration $u_\ell^0 := u_{\ell-1}^\inexact$ (right).\vspace*{1mm}}
  \label{fig:ex1:compare_theta_2}
\end{figure}
\begin{figure}
	\centering
	\psfrag{nE}[c][c]{\tiny number of elements $N$}
	\psfrag{estimator}[c][c]{\tiny error resp.\ estimator}
	\psfrag{lambda =1}{\tiny $\lambda = 1$}
	\psfrag{lambda =0.1}{\tiny $\lambda = 10^{-1}$}
	\psfrag{lambda =0.01}{\tiny $\lambda = 10^{-2}$}
	\psfrag{lambda =0.001}{\tiny $\lambda = 10^{-3}$}
	\psfrag{lambda =0.0001}{\tiny $\lambda = 10^{-4}$}
	\psfrag{lambda =1e-05}{\tiny $\lambda = 10^{-5}$}
	\psfrag{lambda =1e-06}{\tiny $\lambda = 10^{-6}$}	
	\psfrag{O12}[c][r]{\tiny $\OO(N^{-1/2})$}
	\includegraphics[width=0.48\textwidth]{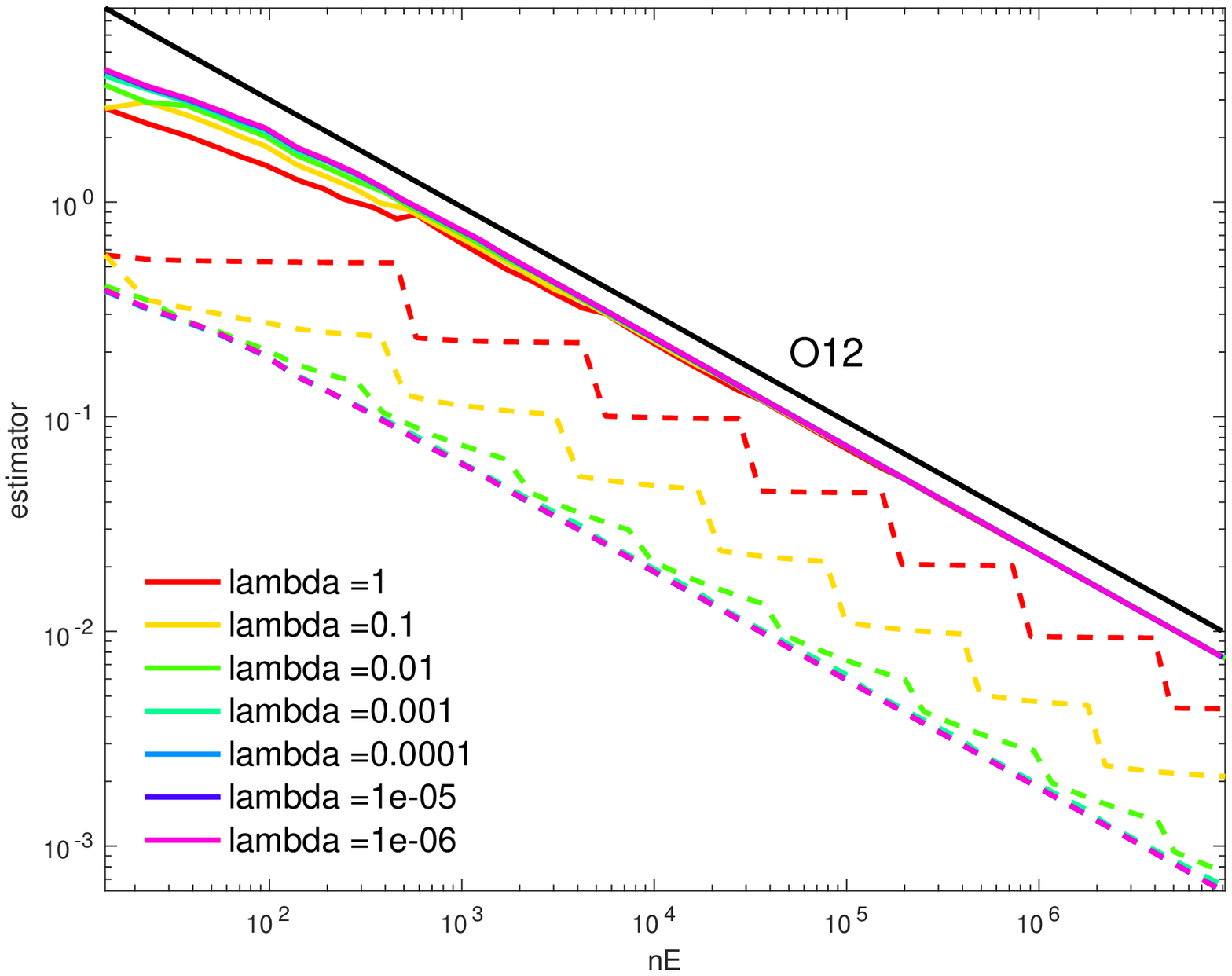}
	\hfill
	\includegraphics[width=0.48\textwidth]{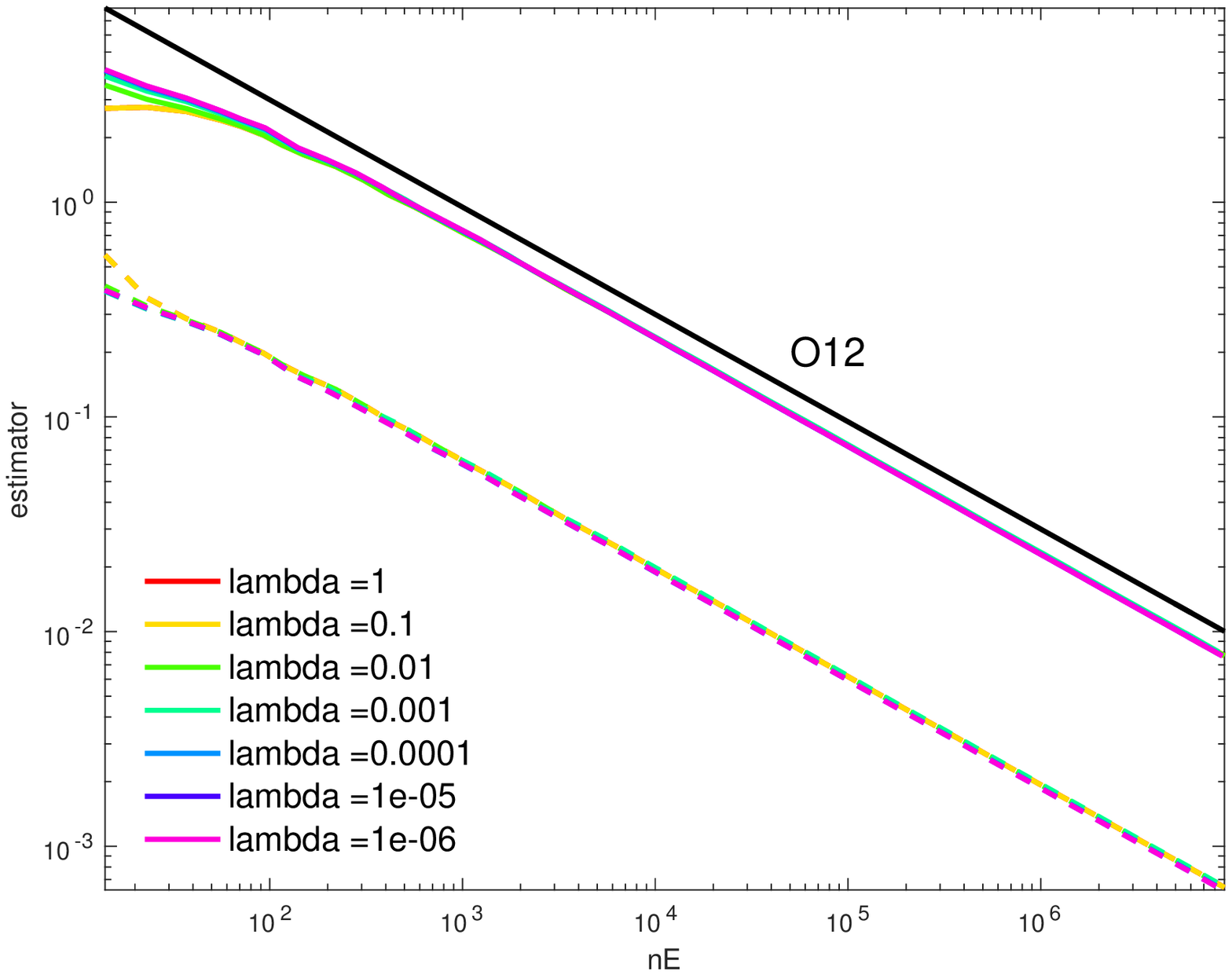}
	\caption{Experiment with known solution from Section~\ref{section:example1}: Convergence of $\eta_\ell(u_\ell^\inexact)$ (solid lines) and $\norm{\nabla u^\exact - \nabla u_\ell^\inexact}{L^2(\Omega)}$ (dashed lines)
		for $\theta = 0.2$ and different values of $\lambda\in\{1,0.1, 0.01,\dots, 10^{-6} \}$, where we compare naive initial guesses $u_\ell^0 := 0$ (left) and nested iteration $u_\ell^0 := u_{\ell-1}^\inexact$ (right).\vspace*{1mm}}
	\label{fig:ex1:compare_lambda_1}
\end{figure}

\begin{figure}
	\centering
	\psfrag{nE}[c][c]{\tiny number of elements $N$}
	\psfrag{FP-steps}[c][c]{\tiny  number of Picard iterations}
	\psfrag{lambda=0.1}{\tiny $\lambda = 10^{-1}$}
	\psfrag{lambda=0.01}{\tiny $\lambda = 10^{-2}$}
	\psfrag{lambda=0.001}{\tiny $\lambda = 10^{-3}$}
	\psfrag{lambda=0.0001}{\tiny $\lambda = 10^{-4}$}
	\psfrag{lambda=1e-05}{\tiny $\lambda = 10^{-5}$}
	\psfrag{lambda=1e-06}{\tiny $\lambda = 10^{-6}$}		
	\psfrag{Olog}[c][c]{\tiny $\OO(\log(N))$}
	\includegraphics[width=0.48\textwidth]{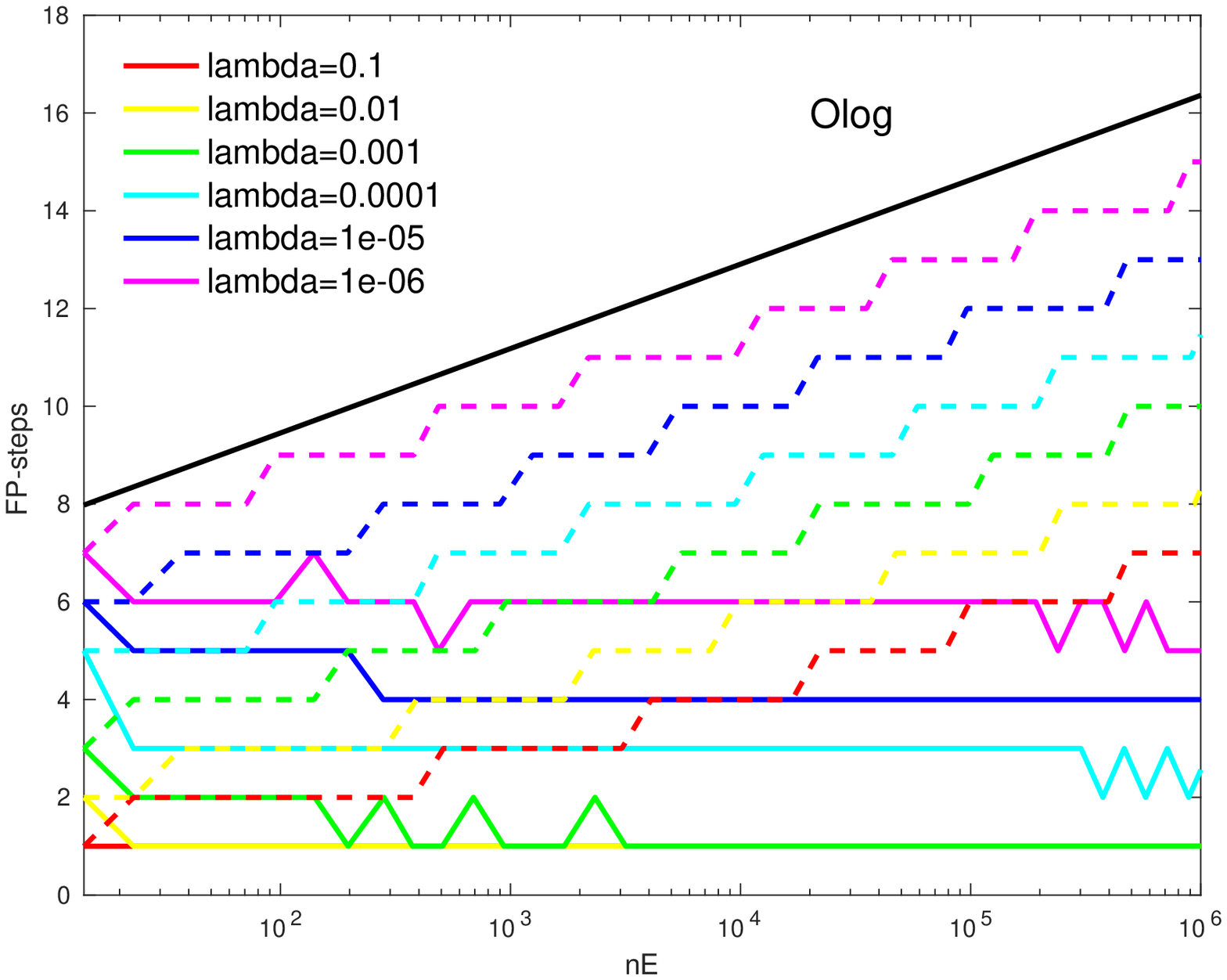}
	\hfill
	\includegraphics[width=0.48\textwidth]{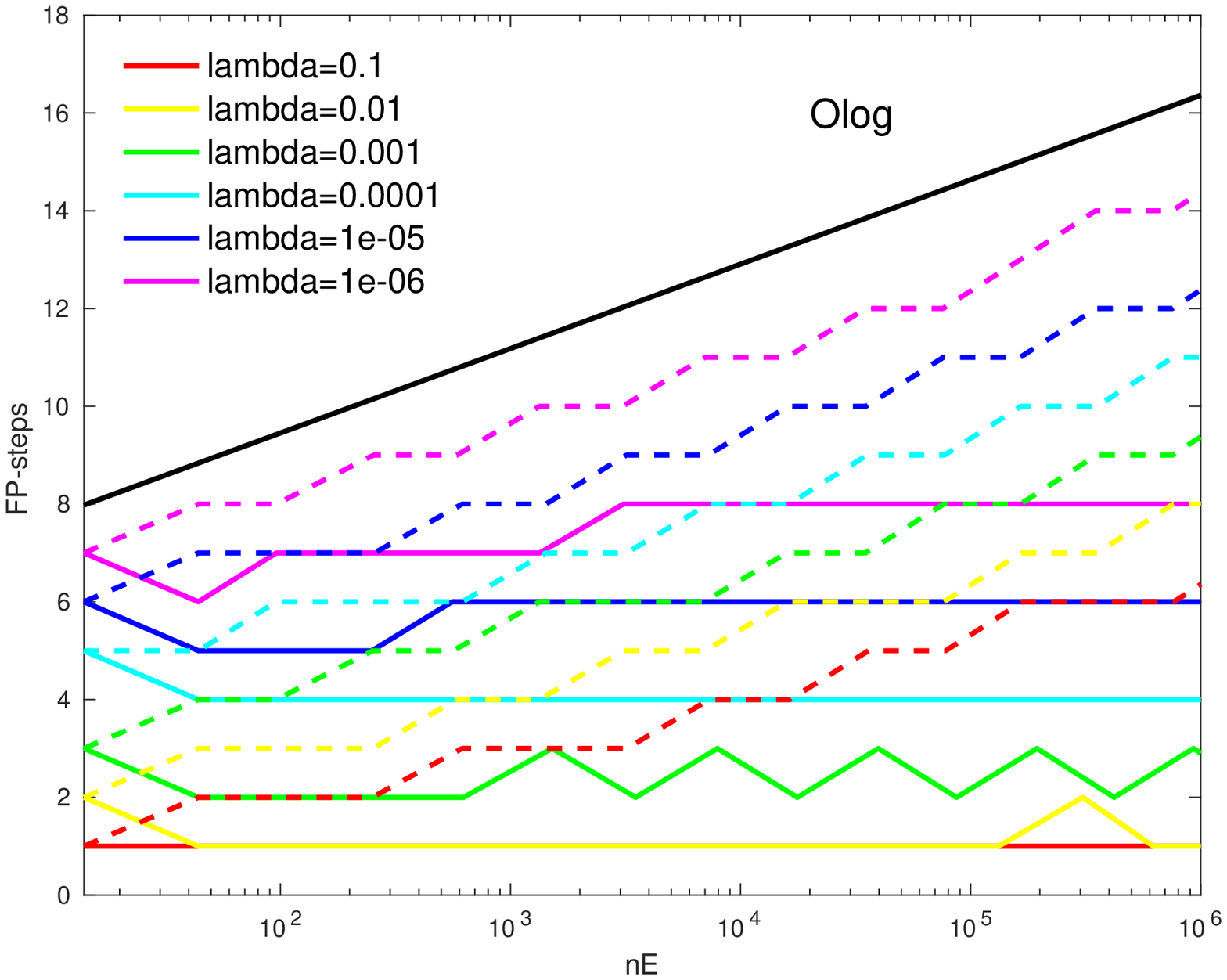}
	\caption{Experiment with known solution from Section~\ref{section:example1}: Number of Picard iterations used in each step of Algorithm~\ref{algorithm} for different values of $\lambda\in\{0.1,\dots, 10^{-6} \}$ and  $\theta = 0.2$ (left) as well as $\theta = 0.8$ (right). As expected, we observe logarithmic growth for naive initial guesses $u_\ell^0 := 0$ (dashed lines); see Remark~\ref{remark1}. On the other hand, nested iteration  $u_\ell^0 := u_{\ell-1}^\inexact$ (solid lines) leads to a bounded number of Picard iterations; see Remark~\ref{remarkXXX}.\vspace*{1mm}}
	\label{fig:ex1:compare_fpsteps}
	\vspace*{2mm}
\end{figure}
\begin{figure}
	\centering
	\psfrag{nE}[c][c]{\tiny number of elements $N$}
	\psfrag{estimator}[c][c]{\tiny   estimator}
	\psfrag{theta =0.2}{\tiny $\theta = 0.2$}
	\psfrag{theta =0.4}{\tiny $\theta = 0.4$}
	\psfrag{theta =0.6}{\tiny $\theta = 0.6$}
	\psfrag{theta =0.8}{\tiny $\theta = 0.8$}
	\psfrag{theta =1}{\tiny $\theta = 1.0$ (uniform)}	
	\psfrag{O12}[c][c]{\tiny $\OO(N^{-1/2})$}
	\psfrag{Oalpha}[l][l]{\tiny $\OO(N^{-\beta/2})$}
	\includegraphics[width=0.48\textwidth]{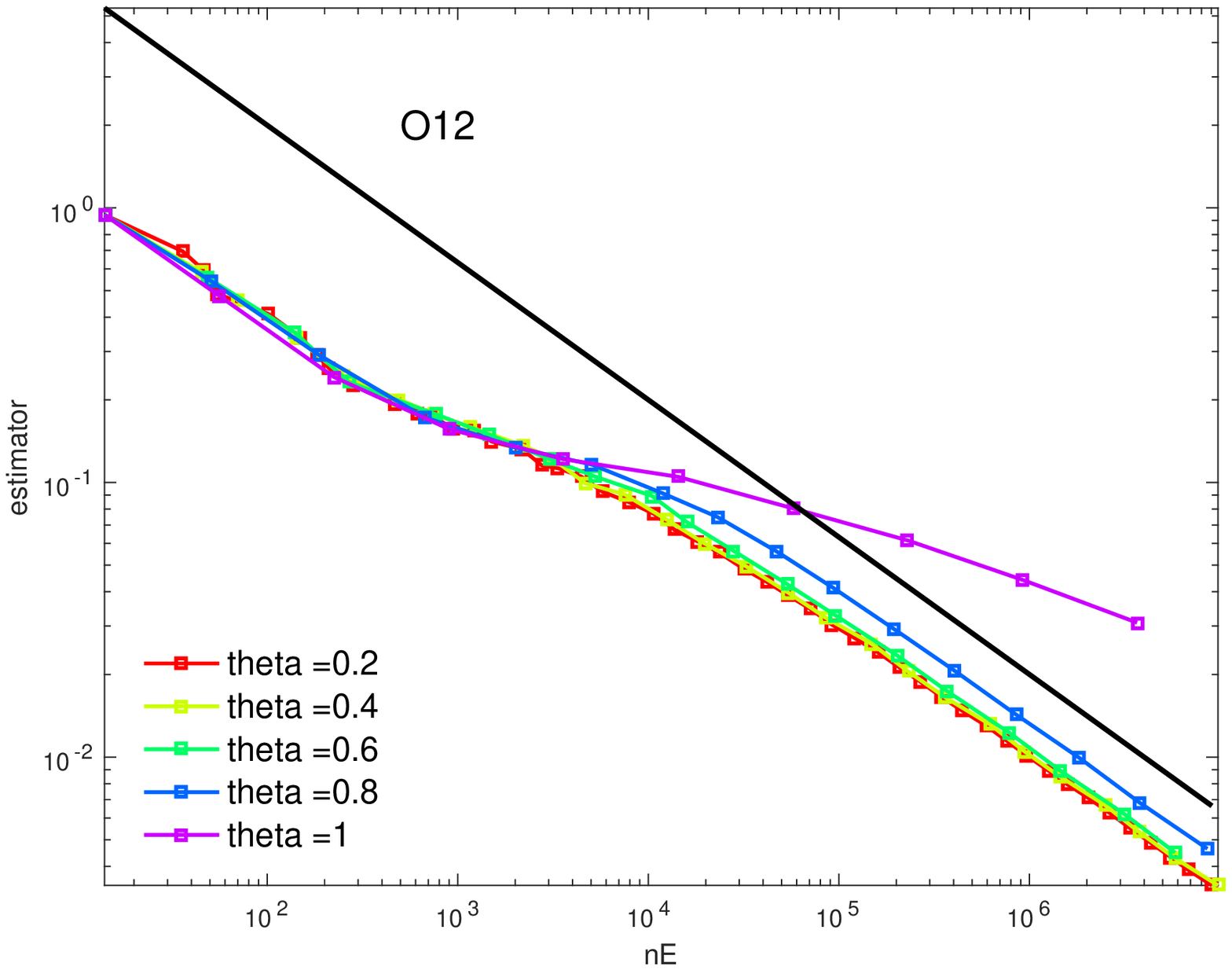}
	\hfill
	\includegraphics[width=0.48\textwidth]{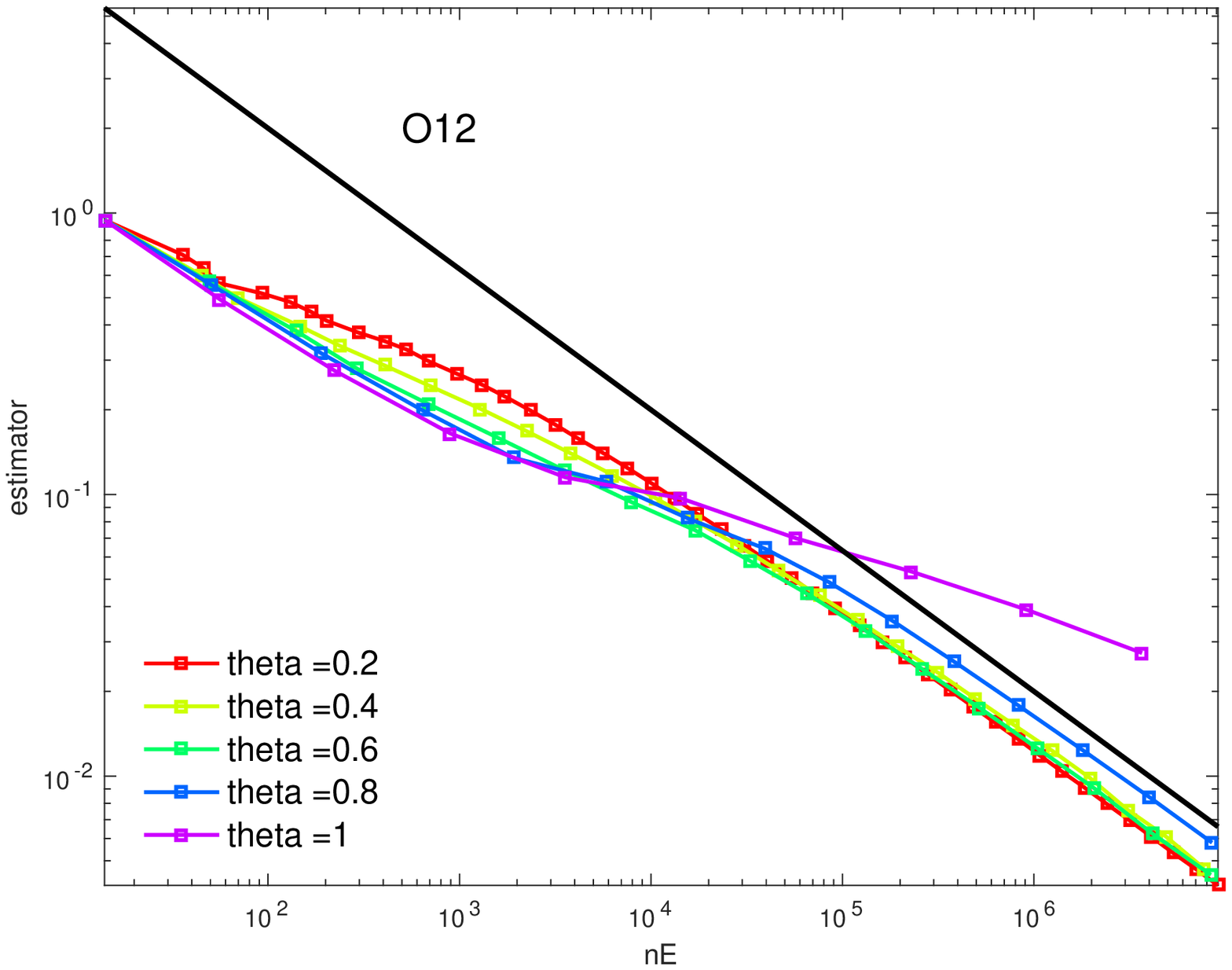}
	\caption{Experiment with unknown solution from Section~\ref{section:example2}: Convergence of $\eta_\ell(u_\ell^\inexact)$ for $\lambda = 0.1$ and $\theta\in\{0.2,\dots, 1\}$, where we compare naive initial guesses $u_\ell^0 := 0$ (left) and nested iteration $u_\ell^0 := u_{\ell-1}^\inexact$ (right).\vspace*{2mm}}
	\label{fig:ex2:compare_theta_1}	  
\end{figure}
\begin{figure}
	\centering
	\psfrag{nE}[c][c]{\tiny number of elements $N$}
	\psfrag{estimator}[c][c]{\tiny   estimator}
	\psfrag{theta =0.2}{\tiny $\theta = 0.2$}
	\psfrag{theta =0.4}{\tiny $\theta = 0.4$}
	\psfrag{theta =0.6}{\tiny $\theta = 0.6$}
	\psfrag{theta =0.8}{\tiny $\theta = 0.8$}
	\psfrag{theta =1}{\tiny $\theta = 1.0$ (uniform)}	
	\psfrag{O12}[c][c]{\tiny $\OO(N^{-1/2})$}
	\psfrag{Oalpha}[l][l]{\tiny $\OO(N^{-\beta/2})$}		
	\includegraphics[width=0.48\textwidth]{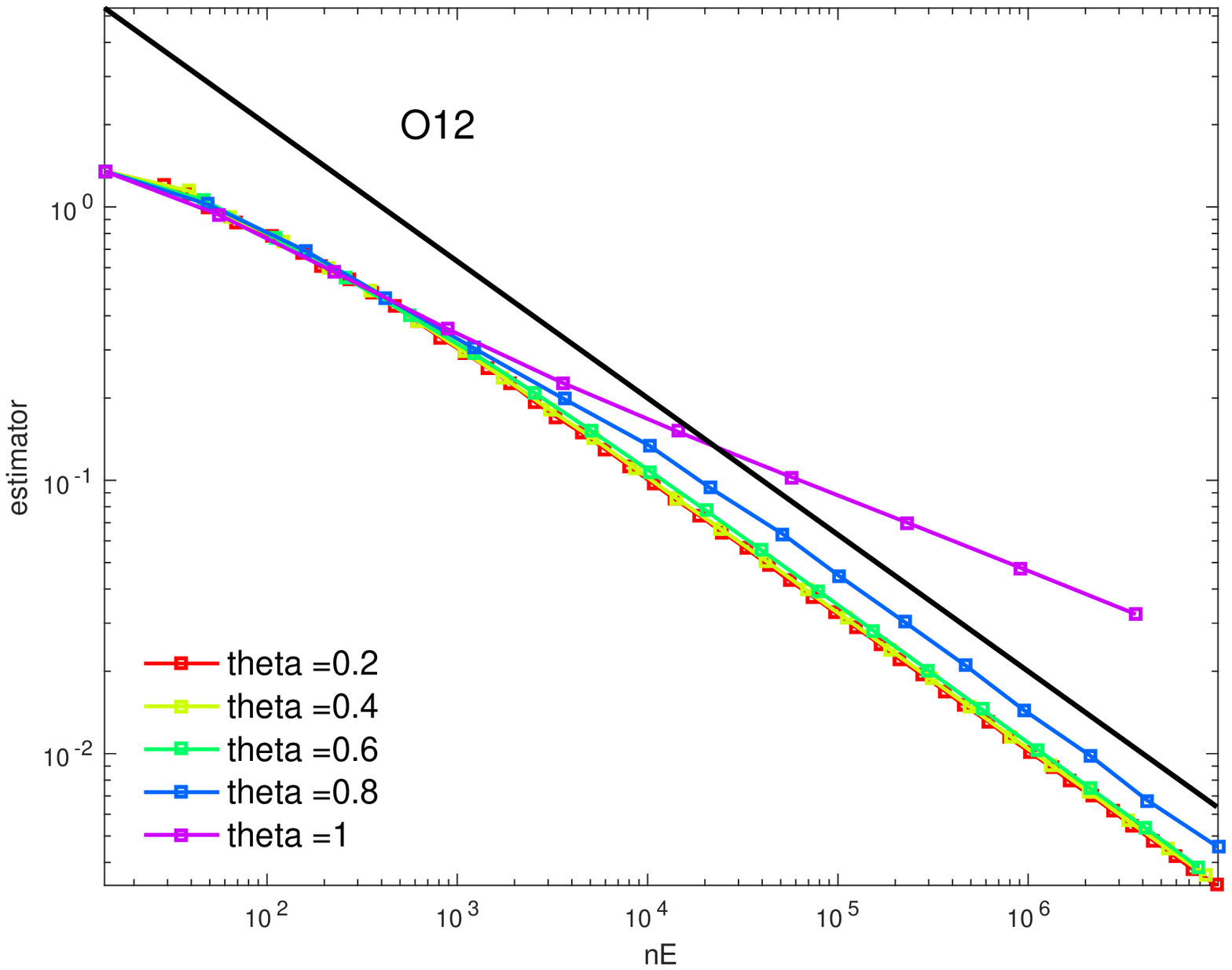}
	\hfill
	\includegraphics[width=0.48\textwidth]{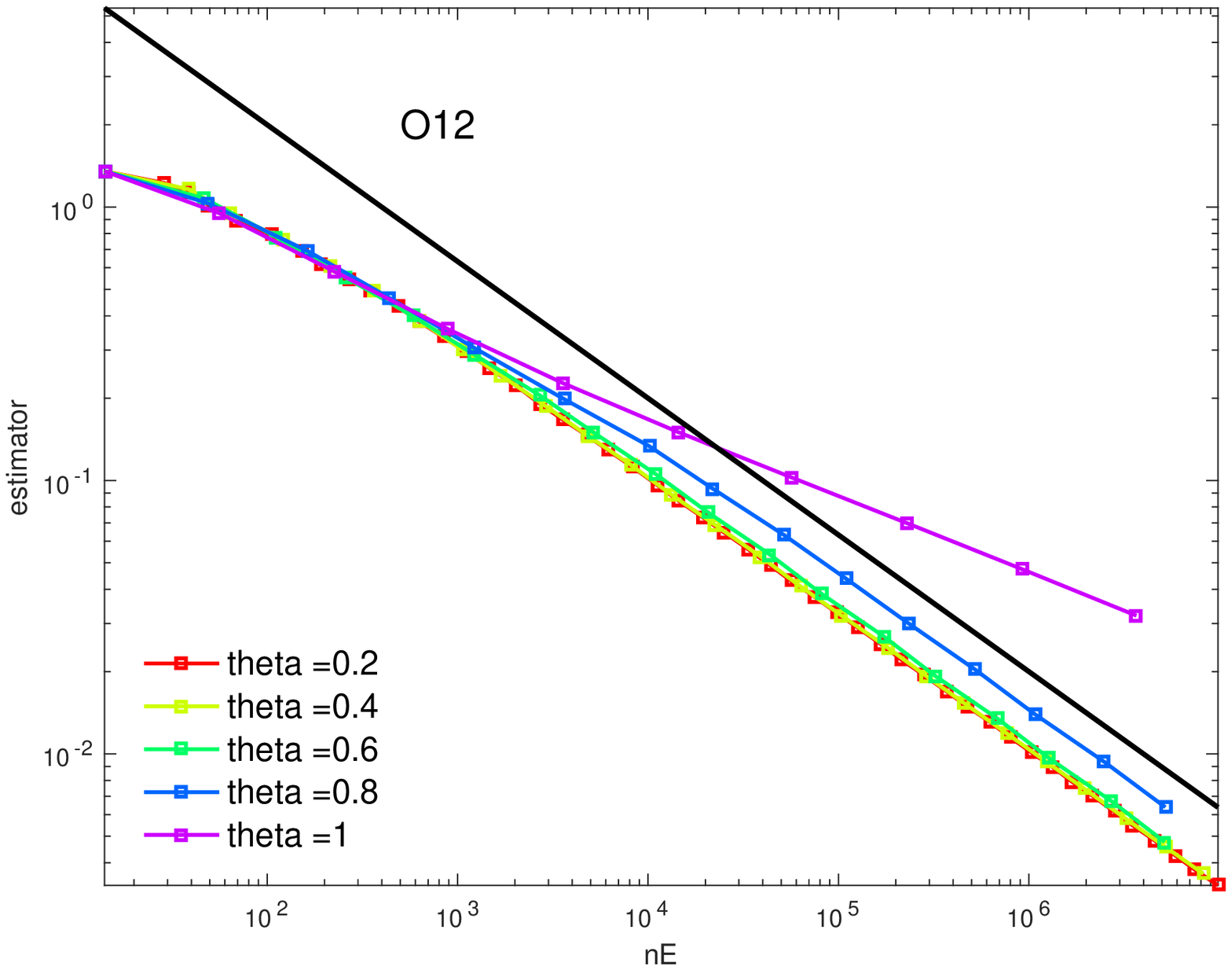}
	\caption{Experiment with unknown solution from Section~\ref{section:example2}: Convergence of $\eta_\ell(u_\ell^\inexact)$ for $\lambda = 10^{-5}$ and $\theta\in\{0.2,\dots, 1\}$, where we compare naive initial guesses $u_\ell^0 := 0$ (left) and nested iteration $u_\ell^0 := u_{\ell-1}^\inexact$ (right).\vspace*{2mm}}
  \label{fig:ex2:compare_theta_2}
\end{figure}
\begin{figure}
	\centering
	\psfrag{nE}[c][c]{\tiny number of elements $N$}
	\psfrag{estimator}[c][c]{\tiny   estimator}
	\psfrag{lambda =1}{\tiny $\lambda = 1$}
	\psfrag{lambda =0.1}{\tiny $\lambda = 10^{-1}$}
	\psfrag{lambda =0.01}{\tiny $\lambda = 10^{-2}$}
	\psfrag{lambda =0.001}{\tiny $\lambda = 10^{-3}$}
	\psfrag{lambda =0.0001}{\tiny $\lambda = 10^{-4}$}
	\psfrag{lambda =1e-05}{\tiny $\lambda = 10^{-5}$}
	\psfrag{lambda =1e-06}{\tiny $\lambda = 10^{-6}$}
	\psfrag{O12}[c][c]{\tiny $\OO(N^{-1/2})$}
	\includegraphics[width=0.48\textwidth]{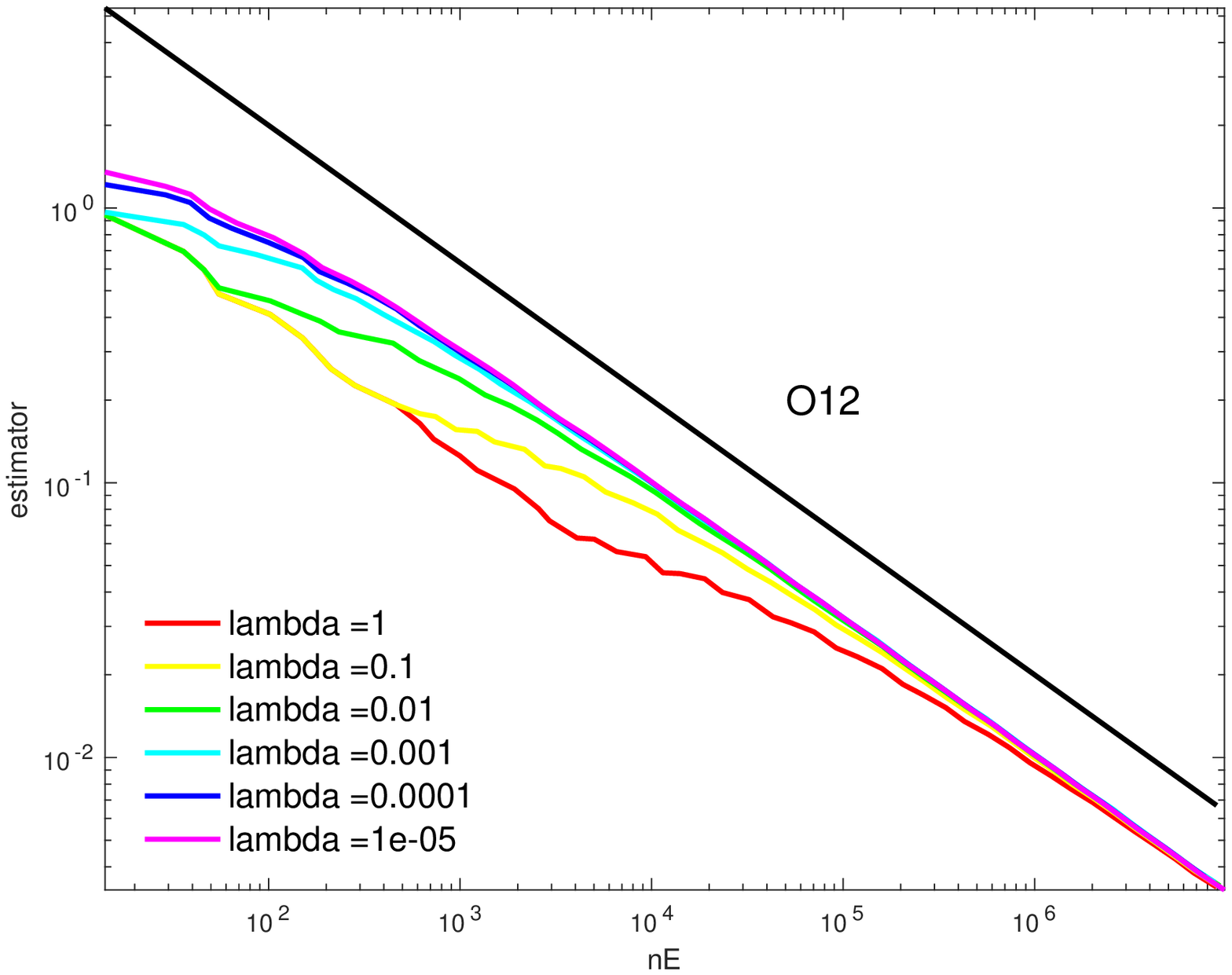}
	\hfill
	\includegraphics[width=0.48\textwidth]{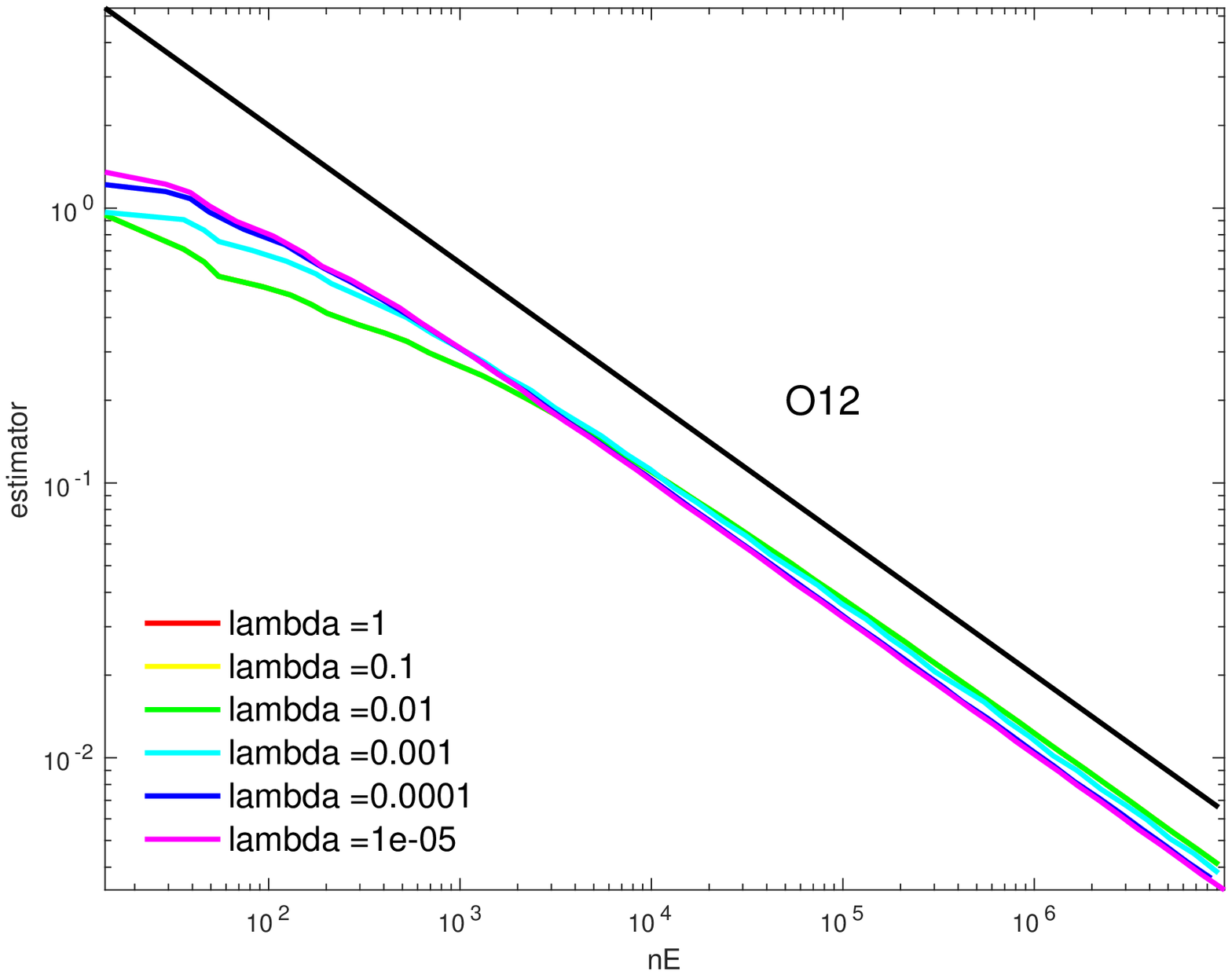}
	\caption{Experiment with unknown solution from Section~\ref{section:example2}:  Convergence of $\eta_\ell(u_\ell^\inexact)$ 
		for $\theta = 0.2$ and  $\lambda\in\{1,0.1, 0.01,\dots, 10^{-5} \}$, where we compare naive initial guesses $u_\ell^0 := 0$ (left) and nested iteration $u_\ell^0 := u_{\ell-1}^\inexact$ (right).\vspace*{2mm}}
	\label{fig:ex2:compare_lambda_1}
\end{figure}

\begin{figure}
	\centering
	\psfrag{nE}[c][c]{\tiny number of elements $N$}
	\psfrag{FP-steps}[c][c]{\tiny  number of Picard iterations}
	\psfrag{lambda=1}{\tiny $\lambda = 1$}
	\psfrag{lambda=0.1}{\tiny $\lambda = 10^{-1}$}
	\psfrag{lambda=0.01}{\tiny $\lambda = 10^{-2}$}
	\psfrag{lambda=0.001}{\tiny $\lambda = 10^{-3}$}
	\psfrag{lambda=0.0001}{\tiny $\lambda = 10^{-4}$}
	\psfrag{lambda=1e-05}{\tiny $\lambda = 10^{-5}$}
	\psfrag{lambda=1e-06}{\tiny $\lambda = 10^{-6}$}
	\psfrag{Olog}[c][c]{\tiny $\OO(\log(N))$}
	\includegraphics[width=0.46\textwidth]{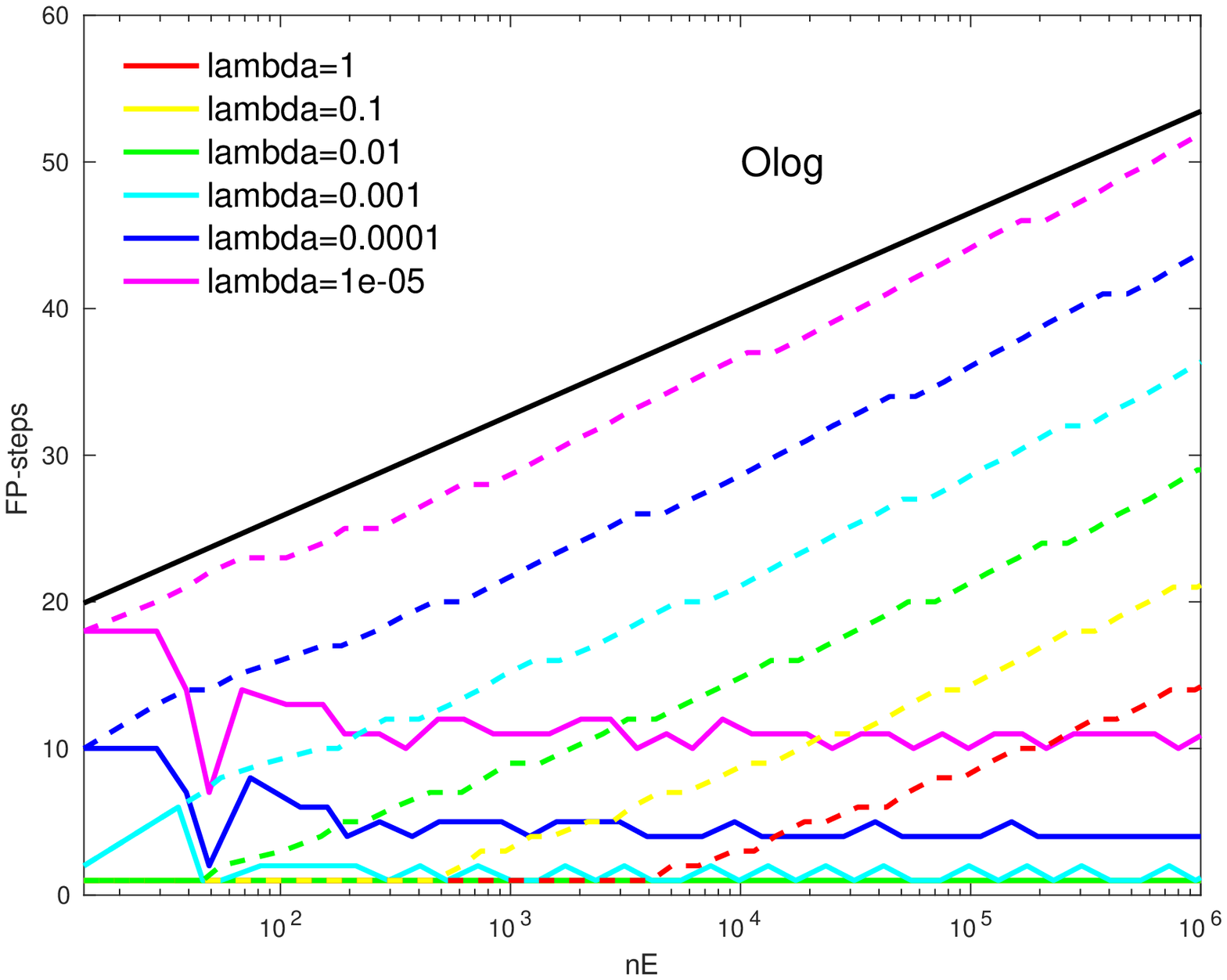}
	\hfill
	\includegraphics[width=0.46\textwidth]{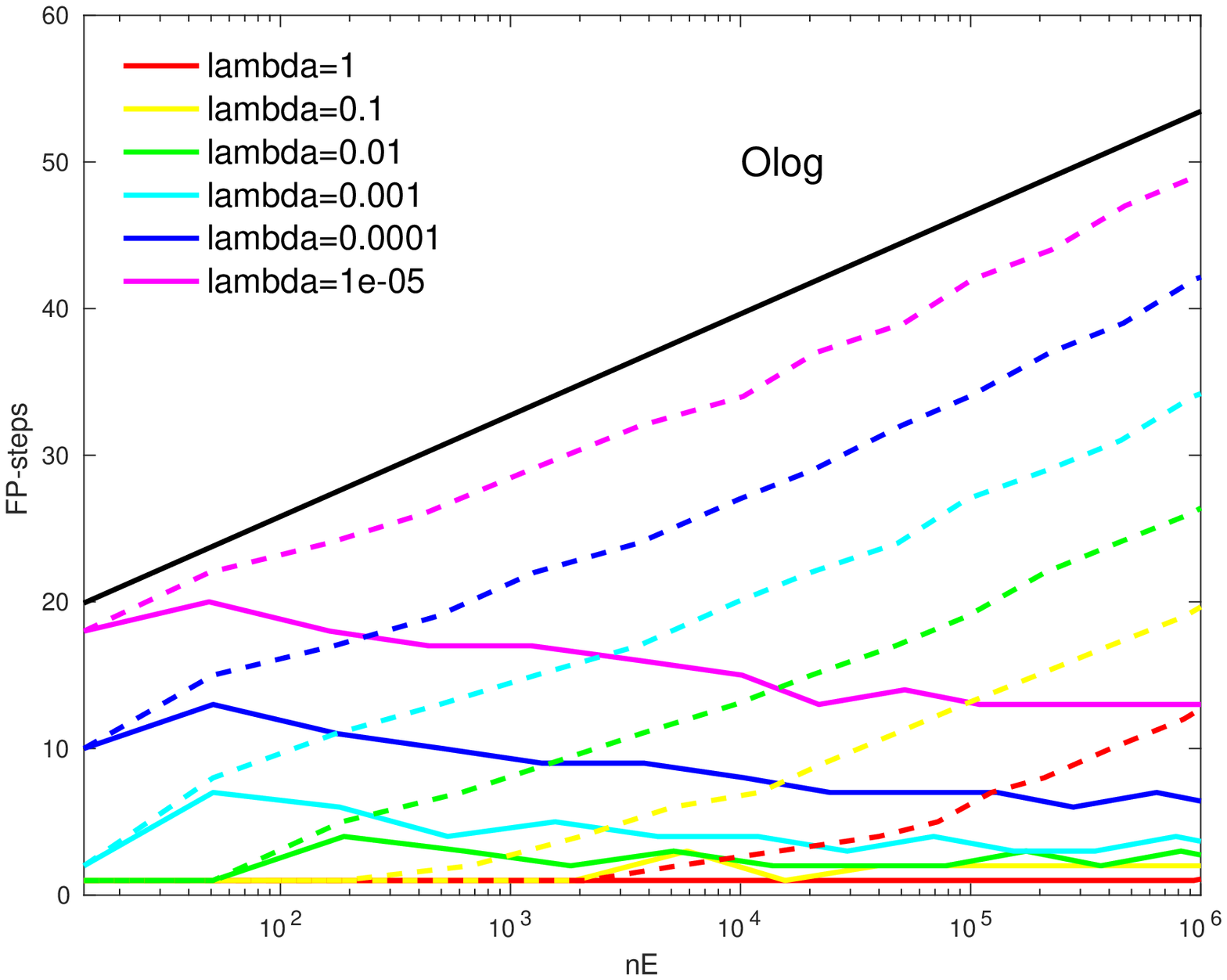}
	\caption{Experiment with unknown solution from Section~\ref{section:example2}: Number of Picard iterations used in each step of Algorithm~\ref{algorithm} for different values of $\lambda\in\{1, 0.1,\dots, 10^{-5} \}$ and  $\theta = 0.2$ (left) as well as $\theta = 0.8$ (right). As expected, we observe logarithmic growth for naive initial guesses $u_\ell^0 := 0$ (dashed lines) as well as a bounded number of Picard iterations for nested iteration  $u_\ell^0 := u_{\ell-1}^\inexact$ (solid lines).\vspace*{1mm}}
	\label{fig:ex2:compare_fpsteps}
\end{figure}


\subsection{Experiment with known solution}%
\label{section:example1}
We consider the Z-shaped domain $\Omega \subset \R^2$ from Figure~\ref{fig:ex1:mesh} (left) with mixed Dirichlet-Neumann boundary and the nonlinear problem~\eqref{eq:example}, where $\mu(x,|\nabla u^\exact(x)|^2) := 2+ \frac{1}{\sqrt{1+|\nabla u^\exact(x)|^2 }}$. 
This choice of $\mu$ leads to $\alpha=2$ and $L=3$ in~{(O1)--(O2)}.
We prescribe the solution $u^\exact$ in polar coordinates by
\begin{align}
	u^\exact(x,y) = r^{\beta} \cos\Big(\beta \, \phi \Big),
\end{align} 
with $\beta = 4/7$ and compute $f$ and $g$ in~\eqref{eq:example} accordingly. We note that $u^\exact$ has a generic singularity at the reentrant corner $(x,y)=(0, 0)$. 

Our empirical observations are the following: Due to the singular behavior of $u^\star$, uniform refinement leads to a reduced convergence rate $\OO(N^{-\beta/2})$ for both, the energy error $\norm{\nabla u^\exact - \nabla u_\ell^\inexact}{L^2(\Omega)}$ as well as the error estimator $\eta_\ell (u_\ell^\inexact)$. 
On the other hand, the adaptive refinement of Algorithm~\ref{algorithm} regains the optimal convergence rate $\OO(N^{-1/2})$, independently of the actual choice of $\theta \in \{0.2, 0.4, 0.6, 0.8\} $ and $\lambda \in \{1, 0.1, 0.01, \ldots, 10^{-6}\}$; see Figure~\ref{fig:ex1:compare_theta_1}--\ref{fig:ex1:compare_lambda_1}. 
Throughout, we compare the performance of the iterative solver for naive initial guess $u_\ell^0:=0$ as well as nested iteration $u_\ell^0:=u_{\ell-1}$:
The larger $\lambda$, the stronger is the influence on the convergence behavior; see Figure~\ref{fig:ex1:compare_lambda_1}. Moreover, Figure~\ref{fig:ex1:compare_fpsteps} shows the number of Picard iterations for $\theta\in\{0.2, 0.8\}$ and $\lambda \in \{0.1, 0.01, \ldots, 10^{-6}\}$.
As expected from Remark~\ref{remark1}, for the naive initial guess $u_\ell^0:=0$, the number of Picard iterations grows logarithmically with the number of elements $\#\TT_\ell$, while we observe a bounded number of Picard iterations for nested iteration $u_\ell^0:=u_{\ell-1}$; cf.~Proposition~\ref{proposition:nested_iteration} resp.\ Remark~\ref{remarkXXX}. 

\subsection{Experiment with unknown solution}
\label{section:example2}
We consider the Z-shaped domain $\Omega \subset \R^2$ from Figure~\ref{fig:ex1:mesh} (left) and the nonlinear Dirichlet problem~\eqref{eq:example} with $\Gamma=\Gamma_D$ and constant right-hand side $f\equiv1$, where $\mu(x, |\nabla u^\exact|^{2}) = 1+ \arctan(|\nabla u^\exact|^2)$. According to~\cite[Example 1]{cw2015}, there holds (O1)--(O2) with $\alpha = 1$ and $L = 1+\sqrt{3}/2 + \pi/3$.

Since the exact solution is unknown, our empirical observations are concerned with the error estimator only; see Figure~\ref{fig:ex2:compare_theta_1}--\ref{fig:ex2:compare_fpsteps}. Uniform mesh-refinement leads to a suboptimal rate of convergence, while the use of Algorithm~\ref{algorithm} regains the optimal rate of convergence. The latter appears to be robust with respect to $\theta \in \{0.2,\dots, 0.8\}$ as well as $\lambda \in \{1, 0.1,\ldots ,10^{-5}\}$. While naive initial guesses $u_\ell^0:=0$ for the iterative solver lead to a logarithmic growth of the number of Picard iterations, the proposed use of nested iteration $u_\ell^0 := u_{\ell-1}$ again leads to bounded iteration numbers.



\section{Conclusions}

In this paper, we have analyzed an adaptive FEM proposed in~\cite{cw2015} for the numerical solution of quasi-linear PDEs with strongly monotone operators. Conceptually based on the residual error estimator and the D\"orfler marking strategy, the algorithm steers the adaptive mesh-refinement as well as the iterative solution of the arising nonlinear systems by means of a simple fixed point iteration which is adaptively stopped if the fixed point iterates are sufficiently accurate. In the spirit of~\cite{axioms}, the numerical analysis is given in an abstract Hilbert space setting which (at least) covers conforming first-order FEM and certain scalar nonlinearities~\cite{gmz}.

We prove that our adaptive algorithm guarantees convergence of the FE solutions to the unique solution of the PDE at optimal algebraic rate for the error estimator (which, in usual applications, is equivalent to energy error plus data oscillations~\cite{gmz}). Employing nested iterations, we prove that the number of fixed point iterations per mesh is bounded logarithmically with respect to the improvement of the error estimator. As a consequence, we thus prove that the adaptive algorithm is not only convergent at optimal rate with respect to the degrees of freedom, but also at (almost) optimal rate with respect to the computational work.

Numerical experiments for quasi-linear PDEs in $H^1_0(\Omega)$ confirm our theory and underline the performance of the proposed adaptive strategy, where each step of the considered fixed point iteration requires only the numerical solution of one linear Poisson problem. In the present work, the iterative and inexact solution of these linear problems is not considered, but it can be included into the analysis~\cite{haberlphd}.

Overall, the present work appears to be the first which guarantees optimal convergence for an adaptive FEM with iterative solver for nonlinear PDEs. Open questions for future research include the following: Is it possible to generalize the numerical analysis to other type of error estimators (e.g., estimators based on equilibrated fluxes~\cite{ev13}) as well as higher-order and/or non-conforming FEM (see, e.g.,~\cite{cw2015} for numerical experiments)? Is it possible to treat non-scalar nonlinearities?

\medskip 
\noindent

\thanks{{\bf Acknowledgements.} 
The authors acknowledge support of the the Austria Science Fund (FWF) through the research project \emph{Optimal adaptivity for BEM and FEM-BEM coupling} under grant P27005 (AH, DP), and the research project \emph{Optimal isogeometric boundary element methods} under grant P29096 (DP, GG). In addition, DP and GG are supported through the FWF doctoral school \emph{Nonlinear PDEs} funded under grant W1245. Moreover, BS and DP acknowledge support of the Vienna Science and Technology Fund (WWTF) through the research project \emph{Thermally controlled magnetization dynamics} under grant MA14-44. }


\bibliographystyle{alpha}
\bibliography{literature}

\end{document}